\documentclass[11pt]{article}

\usepackage{amsmath,amssymb,amsthm,mathtools,amsrefs}
\usepackage[margin=1in]{geometry}
\usepackage{enumitem}
\usepackage{lmodern}
\usepackage{xcolor}
\usepackage{hyperref}
\usepackage{comment}
\usepackage[normalem]{ulem}
\usepackage{authblk}

\colorlet{myrefcolor}{gray!55!blue}
\hypersetup{
  colorlinks=true,
  linkcolor=myrefcolor,
  citecolor=myrefcolor,
  urlcolor=myrefcolor
}

\newtheorem{theorem}{Theorem}[section]
\newtheorem{proposition}[theorem]{Proposition}
\newtheorem{lemma}[theorem]{Lemma}
\newtheorem{corollary}[theorem]{Corollary}
\theoremstyle{definition}

\theoremstyle{remark}
\newtheorem{remark}[theorem]{Remark}

\numberwithin{equation}{section}

\newcommand{\B}{\mathbb B}
\newcommand{\C}{\mathbb C}
\newcommand{\diag}{\operatorname{diag}}
\newcommand{\Herm}{\operatorname{Herm}}
\newcommand{\rank}{\operatorname{rank}}
\newcommand{\Span}{\operatorname{span}}

\title{Degree-Three Rational Sphere Maps:\\
Sharp Denominator Region and Gram Normal Forms}

\author[1]{Eden Danielsen-Jensen}
\affil[1]{{\footnotesize Department of Mathematics, Brigham Young University, Provo, UT 84602, USA}}
\author[2]{Dusty Grundmeier}
\affil[2]{{\footnotesize Department of Mathematics, The Ohio State University, 231 W 18th Ave, Columbus, OH 43210, USA}}
\author[3]{Abdullah Al Helal}
\author[2]{Valentin D.\ Kunz}
\author[3]{Ji\v{r}\'i Lebl}
\affil[3]{{\footnotesize Department of Mathematics, Oklahoma State University, Stillwater, OK 74078, USA}}
\author[4]{Ming Xiao\thanks{M.\ Xiao is supported in part by NSF grants DMS-2045104 and DMS-2554635.}}
\affil[4]{{\footnotesize Department of Mathematics, University of California San Diego,
9500 Gilman Drive, La Jolla, CA 92093, USA}}
\author[5,6]{Weixia Zhu\thanks{W.\ Zhu is supported in part by FWF grants 10.55776/ESP367 and 10.55776/PAT1879425.}}
\affil[5]{{\footnotesize School of Mathematical Sciences, Xiamen University, Xiamen 361005,
China}}
\affil[6]{{\footnotesize Faculty of Mathematics, University of Vienna, Vienna 1090, Austria}}

\affil[ ]{{\footnotesize
edendj@math.byu.edu
grundmeier.1@osu.edu
ahelal@okstate.edu
v$\_$kunz@startmail.com
lebl@okstate.edu
m3xiao@ucsd.edu
wxzhu@xmu.edu.cn
}}

\date{}

\begin{document}

\sloppy

\maketitle

\begin{abstract}
We study degree-three rational sphere maps in two complex variables. After a
standard normalization, the denominator of such a map takes the form
\[
g_\sigma(z)=1+\sigma_1 z_1^2+\sigma_2 z_2^2,
\qquad
\sigma_1,\sigma_2\geq 0.
\]
A basic question is: which pairs $(\sigma_1,\sigma_2)$ can actually occur as
the denominator of a degree-three rational sphere map? The first main result
of the paper gives a complete answer: such a denominator occurs if and only if
\[
0\leq \sigma_1,\sigma_2<1,
\qquad
\sqrt{1-\sigma_1^2}+\sqrt{1-\sigma_2^2}>1.
\]
Our approach converts the sphere-mapping condition into a finite-dimensional
Gram-matrix positivity problem.
Furthermore, for each admissible parameter
$
\sigma=(\sigma_1,\sigma_2),
$
we determine all possible minimal target dimensions in which the corresponding
denominator $g_\sigma$ can be realized. We also give a Gram-matrix normal form
for maps with a fixed denominator and compute, for each admissible $\sigma$,
the dimension of the moduli space of equivalence classes of rational sphere
maps realizing $g_\sigma$.
Finally, we extend the Gram-matrix method to arbitrary source dimension and
obtain a general sufficient condition for the existence of degree-three
rational sphere maps.

\end{abstract}

\section{Introduction}

A classical problem in several complex variables and CR geometry is to
understand proper holomorphic maps between unit balls,
\[
F\colon\B^n\longrightarrow\B^N,
\]
up to automorphisms of the source and target. Algebraic, geometric, and
analytic viewpoints meet naturally in the study of rational sphere maps.
Forstneri{\v c} \cite{F} proved that proper holomorphic maps between balls
with sufficient boundary regularity are rational, while Cima and Suffridge
\cite{CimaSuffridge} showed that such rational proper maps have no poles on
the unit sphere.
Consequently, there is a bijection between rational proper maps between balls and nonconstant rational sphere maps.
D'Angelo developed algebraic and Hermitian-form methods for polynomial and
rational sphere maps in
\cites{DAngeloPolynomial,DAngeloClassification}; see also the books
\cites{DAngeloHypersurfacesBook,DAngeloSphereMaps}. Beginning with Huang's pioneering work \cite{H1}, geometric and analytic methods developed by Huang, and later with collaborators such as Ji, Xu, and Yin, have led to far-reaching rigidity, classification, and gap results; see, for example, \cites{HJ,HJX,HJY}. The literature on proper holomorphic maps between balls
and CR mappings of spheres is extensive, and these references represent only
a small part of it; see references in \cite{DAngeloSphereMaps} for many research works on this topic.

Despite this substantial body of work, explicit classification of ball maps remains
difficult even in low degree when the codimension is unrestricted. We call two ball maps \emph{spherically equivalent} if they differ only by composing
automorphisms of the source and target balls. The
degree-one theory is classical. 
In degree two, Faran, Huang, Ji, and Zhang \cite{FHJZ} proved that every proper rational holomorphic map from $\B^2$ into $\B^N$ is spherically equivalent to a polynomial map. A complete degree-two normal-form classification in all source dimensions was later established in \cite{LeblHermitian}; the resulting normal forms turn out to be monomial maps.
In
sharp contrast, Faran, Huang, Ji, and Zhang \cite{FHJZ} also constructed
degree-three rational sphere maps that are not spherically equivalent to any
polynomial map. Thus degree three is the first degree 
at which a nonconstant denominator can be an intrinsic feature of the
spherical-equivalence class.
Write a rational sphere map in the form
\[
F=\frac{P}{g},
\]
where $P=(p_1,\ldots,p_N)^t$ is a vector-valued polynomial and $g$ is a
scalar polynomial. We say that this representation is \emph{reduced} if
$g,p_1,\ldots,p_N$ have no common nonconstant polynomial factor.
For a reduced representation, we define
\[
\deg F=\max\{\deg P,\deg g\}.
\]
The condition that $F$ maps the unit sphere to the unit sphere is encoded by
the Hermitian \emph{sphere identity}
\[
\|P(z)\|^2=|g(z)|^2,
\qquad z\in\partial\B^n.
\]
A normalized representation with $g(0)=1$ will be called \emph{pointed} if
$F(0)=0$, equivalently if $P(0)=0$.

The normal-form theorem in \cite{LeblNormalForms}*{Theorem~1.1} gives a
starting point for the degree-three problem. Every
reduced rational proper map of degree three is spherically equivalent to a
reduced pointed map $P/g$ 
whose denominator has the form
\[
g(z)=1+q(z),
\qquad
q(z)=\sum_{j=1}^n\sigma_jz_j^2,
\qquad
0\leq\sigma_j < 1.
\]
We call this the $\Lambda$-normal form. The multiset
$\{\sigma_1,\ldots,\sigma_n\}$ is a spherical invariant.   Once the parameters
are ordered, two representatives in $\Lambda$-normal form are spherically
equivalent precisely up to a target unitary and a source unitary preserving
$q$.

In two source variables, the normalized denominator therefore takes the form
\begin{equation}\label{eq:denominator}
g(z)=1+\sigma_1z_1^2+\sigma_2z_2^2,
\qquad
0\leq\sigma_1,\sigma_2 < 1.
\end{equation}
To understand degree-three rational sphere maps in two complex variables, the
first natural question is therefore: which pairs $(\sigma_1,\sigma_2)$ can
occur as the denominator of a reduced degree-three rational sphere map? Our
first main theorem gives a sharp answer.

\begin{theorem}\label{thm:main}
Let $g$ be given by \eqref{eq:denominator}. There exist an integer $N$ and a
holomorphic polynomial map $P\colon\C^2\to\C^N$, with $P(0)=0$ and
$\deg P=3$, such that $P/g$ is a reduced sphere map if and only if
\begin{equation}\label{eq:admissible-range}
0\leq\sigma_1,\sigma_2<1,
\qquad
\sqrt{1-\sigma_1^2}+\sqrt{1-\sigma_2^2}>1.
\end{equation}
\end{theorem}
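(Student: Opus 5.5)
Reduce the sphere identity to a Gram-matrix positivity problem. Write $\|z\|^2=1$ on the sphere and homogenize: the condition $\|P(z)\|^2=|g(z)|^2$ on $\partial\B^2$ is equivalent to the existence of a positive semidefinite Hermitian matrix $G$ (the Gram matrix of $P$) with $\langle G\,v(z),v(z)\rangle = |g(z)|^2$ on the sphere. Here $v(z)$ is the vector of monomials of degree $1,2,3$ (pointedness gives $P(0)=0$). Concretely, set $r=\|z\|^2$ and compare bihomogeneous parts after multiplying by suitable powers of $r$. The identity becomes
\[
\sum_{j,k} \langle P_j,P_k\rangle\, r^{\,\cdot} \;=\; \bigl(1+q\bigr)\bigl(1+\bar q\bigr)
\]
graded by bidegree. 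This forces the Hermitian forms $H_{jk}$ of bidegree $(j,k)$ built from $P$ to satisfy linear identities. For example, $\|P_1\|^2 r^2+\|P_2\|^2 r+\|P_3\|^2 = r^3+|q|^2 r$ on the diagonal blocks, and $\langle P_1,P_3\rangle r = q$-terms off the diagonal. Conversely, any $G\succeq 0$ satisfying these linear constraints factors as $G=A^*A$ and yields $P=Av$. The existence question is thus a feasibility problem: is there a PSD $G$ in an explicit affine space of Hermitian matrices on the $2+3+4=9$ monomials? Reducedness and $\deg P=3$ are open conditions to check at the end. Since $\sigma\neq 0$ (or degree $3$ is otherwise forced), $g$ has no common factor with $P$ generically, and we can perturb within the feasible set.

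Next, exploit the symmetry $z_j\mapsto \pm z_j$ and the torus action $z\mapsto(e^{i\theta_1}z_1,e^{i\theta_2}z_2)$. Under these, $q$ is multiplied by $e^{2i\theta_j}$ termwise, but averaging over the subgroup preserving $g$, namely $z_j\mapsto -z_j$, keeps the constraints. Averaging a feasible $G$ over this group gives a feasible $G$ that is block diagonal with respect to parity classes $(\deg_{z_1}\bmod 2,\deg_{z_2}\bmod 2)$. The off-diagonal constraints only link monomials differing by $z_1^2$ or $z_2^2$. The problem therefore splits into small blocks, each a $2\times2$ or $3\times3$ PSD condition involving the free parameters (the unconstrained entries) and $\sigma_1,\sigma_2$. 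For instance, the block $\{z_1, z_1^3, z_1z_2^2\}$ carries the $\sigma$-coupling between degree $1$ and degree $3$.

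Then analyze each block. The diagonal entries are determined linearly via the coefficient identities of $r^3+|q|^2r$. In particular, the degree-two diagonal gets entries like $1-\sigma_j^2$ after subtraction, and the off-diagonal entries are $\sigma_j$ or free. PSD-ness of a $2\times2$ block amounts to a determinant condition. Optimizing the free parameters reduces to choosing a splitting $t_1+t_2=1$ of a diagonal mass (coming from $\|z\|^2=|z_1|^2+|z_2|^2$) with $t_j\ge\ldots$ and $t_j^2 \le 1-\sigma_j^2$. Feasibility of such a splitting is exactly $\sqrt{1-\sigma_1^2}+\sqrt{1-\sigma_2^2}\ge 1$.

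The main obstacle is the sharp strict inequality at the boundary. When $\sqrt{1-\sigma_1^2}+\sqrt{1-\sigma_2^2}=1$, the optimal $G$ is forced to have a kernel that makes the degree-three part vanish, or makes $P$ share the factor $g$, so $P/g$ degenerates to lower degree. I would prove this by showing that equality forces the blocks to be rank-deficient in a way that gives $P=g\cdot L$ with $L$ linear. In the interior, I would pick the free parameters strictly inside the feasible region, so all blocks are positive definite. Then $P$ has all monomials, including degree-three ones, and cannot be divisible by the irreducible-in-relevant-sense $g$, which ensures reducedness.
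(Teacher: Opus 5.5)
\textbf{There is a genuine gap.} Your overall frame is close to the paper's: a Gram matrix on the nine normalized monomials, averaged over $z_j\mapsto\pm z_j$ and split into parity blocks. But the step that decides the answer is missing, and you have set it up the wrong way.

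\emph{The question is not a feasibility question.} For every $\sigma$, the affine space already contains the positive semidefinite Gram matrix of the graph numerator $gV_1=(z_1g,z_2g)$, since $\|gV_1\|^2=r|g|^2$. So the spectrahedron is never empty, and a criterion of the form ``a feasible splitting exists iff $\sqrt{1-\sigma_1^2}+\sqrt{1-\sigma_2^2}\ge 1$'' cannot be the mechanism. Set $\alpha=\sqrt{1-\sigma_1^2}$ and $\beta=\sqrt{1-\sigma_2^2}$. What you must prove is:
\begin{itemize}
\item if $\alpha+\beta\le 1$, every feasible point gives a non-reduced numerator;
\item if $\alpha+\beta>1$, some feasible point gives a reduced one.
\end{itemize}
Your boundary-degeneracy step addresses only the equality case, and it is announced rather than proved. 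The reduction to a splitting $t_1+t_2=1$ with $t_j^2\le 1-\sigma_j^2$ is likewise asserted, not derived, and matches no constraint of the problem. In particular, the degree-two diagonal is free; it is not ``$1-\sigma_j^2$''. You also never show $\sigma_j<1$; your splitting presupposes it. The paper gets it from the zero-free property of a reduced denominator on $\overline{\B^2}$, e.g.\ $g(i/\sqrt{\sigma_1},0)=0$.

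\emph{The missing chain.}
\begin{enumerate}
\item After your averaging, the $e^{i\theta}$-constraint is vacuous, because quadratic monomials lie in different parity classes from linear and cubic ones. The linear--cubic block is pinned to $T$, and the free data are $X=\diag(x,y)$ and the quadratic block.
\item Take the Schur complement against $X$, and note that the quadratic block contributes a positive semidefinite term $K^*K$ to the cubic block. Positivity then reduces to $M_X=I_4+T^*T-S_X-T^*X^{-1}T\ge0$. For constructions the quadratic block may be taken to be zero.
\item $M_X=0$ forces $X=I_2$ and $P=gP_1$. So a reduced map needs $M_X\neq0$; this is the precise form of your ``degeneration''.
\item $M_X$ splits into two $2\times2$ blocks on indices $(1,3)$ and $(2,4)$. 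With $X=\diag(1-s,1-t)$, the diagonal entries force $0\le s\le\alpha^2$ and $0\le t\le\beta^2$. The determinants are $\frac{s}{3(1-s)}\Phi_{13}$ and $\frac{t}{3(1-t)}\Phi_{24}$, where $\Phi_{13}=s(\alpha^2+\beta^2-1-s)+2t(\alpha^2-s)$ and $\Phi_{24}=t(\alpha^2+\beta^2-1-t)+2s(\beta^2-t)$.
\end{enumerate}

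The strict inequality then follows with no separate boundary analysis. Suppose $\alpha+\beta\le 1$.
\begin{itemize}
\item If $s,t>0$, multiplying $\Phi_{13}\ge0$ and $\Phi_{24}\ge0$ gives $4(\alpha^2-s)(\beta^2-t)\ge(1+s-\alpha^2-\beta^2)(1+t-\alpha^2-\beta^2)$. This is impossible: the right side is at least $(2\alpha\beta+s)(2\alpha\beta+t)>4\alpha^2\beta^2$, while the left side is at most $4\alpha^2\beta^2$.
\item If $s>0=t$, then $\Phi_{13}\ge0$ gives $\alpha^2+\beta^2\ge 1+s>1$, contradicting $\alpha+\beta\le1$. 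The case $s=0<t$ is symmetric.
\end{itemize}

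For sufficiency, ``pick the parameters strictly inside'' needs an explicit point of this region. For example, $s=\kappa\alpha d$ and $t=\kappa\beta d$, with $d=\alpha+\beta-1$ and $0<\kappa<1$, make both blocks positive definite.
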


Catlin and D'Angelo \cite{CD} proved that if $q$ is a polynomial satisfying $q(0)=0$ and 
$
\sup_{\overline{\B^n}}|q|<1,
$
then there exists an integer $N$ and a polynomial map $P: \C^n \to \C^N$
such that $P/(1+q):\B^n \to \B^N$ is a reduced rational proper map. For the
$\Lambda$-normalized quadratic $q(z)=\sigma_1 z_1^2+\sigma_2 z_2^2$, we have
\[
\sup_{\overline{\B^2}}|q|=\max\{\sigma_1,\sigma_2\}.
\]\
Therefore their theorem produces some reduced rational map with denominator $1+q$ whenever $\max\{\sigma_1,\sigma_2\}<1$. It does not, however, control the degree of the numerator or the target dimension. Theorem \ref{thm:main} gives the sharp degree three refinement; see the remark below.

\begin{remark}
For the particular denominator $g$ in \eqref{eq:denominator},
the pointedness requirement in Theorem~\ref{thm:main} imposes no additional
restriction on the parameters $(\sigma_1,\sigma_2)$. Indeed, suppose that
$P/g$ is a reduced degree-three sphere map, without assuming $P(0)=0$
but still assuming that the denominator is of the form \eqref{eq:denominator}.
After a target unitary transformation, write
$P(0)=(c,0,\ldots,0).$
If $P=(p_1,\ldots,p_N)^t$, comparison of the third Fourier coefficient in
the sphere identity shows that the cubic homogeneous part of $p_1$ vanishes.
Hence
\[
    \widetilde P=(z_1p_1,z_2p_1,p_2,\ldots,p_N)^t
\]
is again a reduced degree-three numerator with the same denominator and
satisfies $\widetilde P(0)=0$. Thus, for this fixed form of denominator,
the existence of a degree-three realization is equivalent to the existence
of a pointed degree-three realization.
See D'Angelo \cite{DAngeloHypersurfacesBook}*{Remark 3 on page 163} or \cite{DAngeloSphereMaps}
for more information
on this tensoring procedure.

In particular, in view of the Catlin--D'Angelo theorem  \cite{CD} above, the question
of whether a prescribed denominator $g$ of this form admits a
degree-three realization has the same answer with or without imposing
pointedness. We nevertheless retain the pointed formulation throughout,
since in the $\Lambda$-normal form the parameters $\sigma_1,\sigma_2$ are
spherical invariants, whereas for an arbitrary unpointed representation
they need not have such an invariant meaning.
\end{remark}

We call a parameter pair satisfying \eqref{eq:admissible-range}, and its
corresponding denominator, \emph{admissible}. Thus the polynomial point
$(0,0)$ is admissible. Let $\Sigma$ denote the set of admissible ordered pairs, namely those
$(\sigma_1,\sigma_2)$ satisfying \eqref{eq:admissible-range} with
$\sigma_1\geq\sigma_2$, and write
\[
g_\sigma(z)=1+\sigma_1z_1^2+\sigma_2z_2^2,
\qquad \sigma=(\sigma_1,\sigma_2)\in\Sigma.
\]

The inequalities $\sigma_j<1$ reflect the zero-free denominator property
\cite{CimaSuffridge}. The square-root inequality is the additional
restriction imposed by degree three. D'Angelo's realization theorem gives
numerators after a suitable scalar rescaling of a prescribed denominator
\cite{DAngeloRationalCR}*{Theorem~4.1}; here the denominator is fixed in
$\Lambda$-normal form and we determine exactly when a cubic numerator
exists. On the symmetric line $\sigma_1=\sigma_2=\sigma$, the condition
reduces to $\sigma<\sqrt3/2$, in agreement with the symmetric calculation
in \cite{DAngeloRationalCR}.

The proof begins with D'Angelo's Fourier identities \cite{DAngeloRationalCR}*{Corollary~3.1}. Write
$P=P_1+P_2+P_3$, where $P_j$ is vector-valued homogeneous of degree $j$. Using the
Hermitian inner product that is linear in the first variable, these
equations take the form
\begin{equation}\label{eq:dangelo-intro}
\begin{aligned}
q
&=\frac{\langle P_3,P_1\rangle}{\|z\|^2},\\
\langle P_3,P_2\rangle
&=-\|z\|^2\langle P_2,P_1\rangle,\\
\|P_3\|^2+\|P_2\otimes z\|^2
+\|P_1\otimes z^{\otimes2}\|^2
&=\|z^{\otimes3}\|^2+|q|^2\|z\|^2.
\end{aligned}
\end{equation}
We translate these equations
into the adjoint notation used for the Gram-matrix calculation in
Section~\ref{sec:gram}. For the denominator-existence problem, the middle
identity creates no additional obstruction: in the
construction we may take $P_2=0$, while in the necessity argument the term
$\|P_2\otimes z\|^2$ is absorbed into a positive-semidefinite cubic Gram term. The
first and third identities produce a positive definite $2\times2$ matrix
$X$ and a residual $4\times4$ matrix $M_X$. A degree-three numerator exists
precisely when $M_X\geq0$ and $M_X\neq0$;
see Theorem \ref{thm:gram} below.
Taking the direct sum of a numerator with its image under a sign symmetry
makes $X$ diagonal. The residual matrix then splits into two $2\times2$
blocks, and their positivity gives
\eqref{eq:admissible-range}. The same calculation gives strict feasibility:
every admissible denominator admits a choice of $X$ for which $M_X>0$.

Having determined exactly which denominators occur, a natural next question is: for a fixed admissible denominator, which minimal target dimensions can occur?
If
$P=(p_1,\ldots,p_N)^t$, define the \emph{minimal target dimension} of $P$  to be
$
\dim_{\C}\Span_{\C}\{p_1,\ldots,p_N\}.
$
In other words, it is the smallest $k$ such that a target unitary carries the
image into $\B^k\subset\B^N$.

\begin{theorem}[Allowable target dimensions]
\label{thm:attainable-targets}
Fix $\sigma\in\Sigma$ and a positive integer $N$. If
$\sigma\neq(0,0)$, then there is a reduced pointed degree-three sphere map with
$\Lambda$-normal-form denominator $g_\sigma$ and minimal target $\B^N$ if
and only if
\[
4\leq N\leq9.
\]
At the polynomial point $\sigma=(0,0)$, such a map exists if and only if
\[
3\leq N\leq9.
\]
In particular, every admissible nonconstant denominator occurs for a map
into $\B^4$, and target dimension four is optimal uniformly over the
nonconstant fibers.
\end{theorem}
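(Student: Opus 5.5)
The plan is to read everything off the Gram-matrix reformulation of Theorem~\ref{thm:gram}, working in the adjoint notation of Section~\ref{sec:gram}. A reduced pointed degree-three numerator with denominator $g_\sigma$ is described by three pieces of data. The first is the linear part $P_1$, which is determined up to a target unitary by the positive definite $2\times2$ matrix $X$; in particular $\dim\Span P_1 = 2$. The second is the quadratic part $P_2$. The third is the cubic part $P_3$, whose Gram matrix is tied to the residual $4\times4$ matrix $M_X\geq0$.

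The minimal target dimension equals the rank of the Gram matrix of $(P_1,P_2,P_3)$ in the monomial basis of polynomials of degree $1$ through $3$. That space has dimension $2+3+4=9$, which immediately gives the upper bound $N\leq 9$. For the lower bound, the linear part alone spans two dimensions. The first identity in \eqref{eq:dangelo-intro} forces $P_3$ to pair nontrivially with $P_1$ when $q\neq0$. The third identity then forces the cubic Gram matrix to have rank large enough to absorb the defect. Concretely, I would show that the total rank is at least $2+\rank M_X$ plus a correction coming from the components of $P_3$ that are aligned with $P_1$. I would then check that $M_X$ is nonzero of rank at least $2$ when $\sigma\neq(0,0)$, which gives $N\geq4$. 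At $\sigma=(0,0)$ the homogeneous map $z^{\otimes 3}$ has rank $3$ after symmetrization ($z_1^3,\sqrt3 z_1^2z_2,\dots$ spans $4$, so instead I would use the known minimal polynomial maps of degree three from $\B^2$ into $\B^3$, such as the Faran-type maps), giving $N=3$ there. The main obstacle is this lower-bound step: proving that no nonconstant admissible $g_\sigma$ is realized in $\B^3$. My first attempt would use the classification of maps $\B^2\to\B^3$ of degree three (Faran), all of which are polynomial up to spherical equivalence. Since $\sigma$ is a spherical invariant, $\sigma=(0,0)$ would follow. If that citation is not acceptable, I would instead prove directly from $M_X$ that $\rank M_X\geq 2$ whenever $q\neq0$.

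For attainability of every $N$ between the lower bound and $9$, I would use convexity and direct sums. Strict feasibility says there is an $X$ with $M_X>0$. Choosing the cubic Gram matrix to be $M_X$ itself, the total rank is $2+4=6$ with $P_2=0$. Turning on $P_2$ with a small positive definite Gram block, compensated through the middle identity and the $\|P_2\otimes z\|^2$ term, raises the rank up to $9$. To lower the rank, I would move $X$ toward the boundary of the feasible region, where $M_X$ has rank $2$ or $3$; this gives $N=4$ and $N=5$. The intermediate values are filled in by adding rank-one positive pieces one at a time, which keeps the sum in the feasible cone because the constraints are affine.

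To confirm that boundary points of the feasible region still give $M_X\neq0$ with the prescribed rank, I would use the explicit $2\times2$ block splitting obtained from diagonal $X$. One block can be made singular while the other stays definite, and this yields rank exactly $2$ and hence $N=4$. The polynomial case is handled the same way, starting from the rank-$3$ example instead.
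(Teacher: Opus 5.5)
Your upper bound $N\le9$, the target-six construction from a strictly feasible $X$, and the appeal to Faran's classification for target three all agree with the paper. The bottom of the range, however, has a genuine gap.

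You say that making one $2\times2$ block of $M_X$ singular while the other stays definite ``yields rank exactly $2$ and hence $N=4$.'' It does not. For $(s,t)\neq(0,0)$ neither $M^{(13)}$ nor $M^{(24)}$ can vanish (Remark~\ref{rem:residual-rank-one-impossible}). So a singular block has rank one, $\rank M_X=1+2=3$, and this gives $N=5$. Rank two, and hence $N=4$ with $Q=0$, requires \emph{both} blocks to degenerate at the same diagonal $X$. Producing such an $X$ is exactly the content of the ``in particular'' clause. The paper does this in Lemma~\ref{lem:residual-ranks}. Writing $s=c$, $t=c\tau$, Lemma~\ref{lem:rank-one-perturbation} gives the critical values $c_1(\tau)$, $c_2(\tau)$ at which each block becomes singular. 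One shows that $c_1-c_2$ changes sign on the range where both are positive (separately for $\delta>0$ and $\delta\le0$), obtains a crossing $c_1(\tau_0)=c_2(\tau_0)>0$ by the intermediate value theorem, and checks $X>0$. Your sketch has no substitute for this simultaneous degeneration.

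Several other steps also fail as written.

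\textbf{Intermediate targets.} ``Adding rank-one positive pieces \ldots\ because the constraints are affine'' cannot work. A holomorphic polynomial vanishing on the sphere is zero, so $\ker\mathcal L$ contains no nonzero positive semidefinite matrix, and any such increment leaves $\mathcal A_g$. The correct mechanism (Lemma~\ref{lem:target-realization}) starts from $M_X>0$ and adjoins $\ell=1,2$ small quadratic monomials $\epsilon Q_\ell$ in a new orthogonal summand. It then subtracts $\epsilon^2K_\ell^*K_\ell$ from the residual, which stays of rank four, giving $N=6+\ell$.

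\textbf{Lower bound via $M_X$.} The inequality $N\ge2+\rank M_X$ is not available. In fact $N=2+k+\rank(M_X-K^*K)$, and a single component $\epsilon z_1^2$ already gives $\rank K^*K=2$. Moreover, diagonal $X$ is reached only after the symmetrization $P\oplus P(D_1z)$, which changes the target dimension. So your fallback via $\rank M_X\ge2$ cannot replace Faran, as the paper remarks.

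\textbf{Excluding $N\le2$.} You never do this. At $\sigma=(0,0)$ one has $T=0$ and the linear part may vanish (e.g.\ $P=V_3$), so ``the linear part spans two dimensions'' fails. The paper instead uses the nonexistence of proper maps $\B^2\to\B^1$ and Alexander's theorem.

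\textbf{Target four at the polynomial point.} There $M_X=I_4-S_X$ has rank $3$ or $4$ for every feasible diagonal $X\neq I_2$. So $N=4$ must come from a map with zero linear part such as $V_3$, not from ``the same'' construction.

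\textbf{Applying Faran.} You should note that $g_\sigma$ is zero-free near $\overline{\B^2}$ and that reduced degree is a spherical invariant. Then the map is equivalent to $(z_1^3,\sqrt3\,z_1z_2,z_2^3)$, and invariance of the $\Lambda$-parameters forces $\sigma=(0,0)$.
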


Beyond existence and target dimension, we would also like to understand the
moduli space of all degree-three rational sphere maps realizing a fixed
denominator, up to spherical equivalence.
Let
$\mathcal W$ be the nine-dimensional space of scalar polynomials in
$z_1,z_2$ of degree at most three with zero constant term. We call $F=P/g$
\emph{full rank} when the components of $P$ span $\mathcal W$. Writing $P$
in a normalized monomial basis for $\mathcal W$ gives a positive
semidefinite Hermitian Gram matrix $H$. The rank of $H$ is the minimal target
dimension, thus $F$ is full rank precisely when $H>0$. For a fixed denominator, the
sphere identity cuts out a spectrahedron in a real affine space of dimension
$25$; its points parametrize target-unitary classes of numerators. For every admissible denominator, the corresponding spectrahedron contains a full-rank point.

Let $\mathcal M_\sigma$ be the moduli space of spherical-equivalence classes
of all reduced maps of exact degree three with ordered
$\Lambda$-normal-form denominator $g_\sigma$, and set
$\mathcal M=\bigsqcup_{\sigma\in\Sigma}\mathcal M_\sigma$.

\begin{theorem}\label{thm:moduli-dimension}
For fixed $\sigma\in\Sigma$,
\[
\dim_{\mathbb R}\mathcal M_\sigma=
\begin{cases}
25,&\sigma_1>\sigma_2>0,\\
24,&\sigma_1=\sigma_2>0,\\
24,&\sigma_1>0=\sigma_2,\\
21,&\sigma_1=\sigma_2=0.
\end{cases}
\]
Moreover, $\dim_{\mathbb R}\mathcal M=27$; its diagonal,
coordinate-axis, and polynomial strata have dimensions $25$, $25$, and
$21$, respectively.
\end{theorem}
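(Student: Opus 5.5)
We compute $\dim_{\mathbb R}\mathcal M_\sigma$ as the dimension of the parameter space of numerators for the fixed denominator $g_\sigma$, minus the dimension of the residual symmetry group. As explained before the statement, target-unitary classes of numerators $P$ with $P/g_\sigma$ a sphere map correspond to Gram matrices $H\geq 0$ on $\mathcal W$ satisfying the sphere identity. These form a spectrahedron $S_\sigma$ in a real affine space $A_\sigma$ of dimension $25$: the $81$ real parameters of a $9\times 9$ Hermitian matrix, cut down by the $56$ independent real linear conditions obtained from the bihomogeneous coefficients of $\|P\|^2-|g_\sigma|^2\|z\|^{2\cdot 3}$ on the sphere, made homogeneous. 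I would verify the count $25$ by writing these conditions in the block language of Section~\ref{sec:gram} and checking that they are independent. The independence should be uniform in $\sigma$, since $\sigma$ enters only through the affine right-hand side.

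Since every admissible $\sigma$ admits a full-rank point ($H>0$), $S_\sigma$ has nonempty interior in $A_\sigma$, so $\dim S_\sigma=25$. The full-rank points are open and dense there, and reducedness and exact degree three are open conditions that hold at some point, so they remove nothing dimensionally. By the $\Lambda$-normal-form uniqueness statement, $\mathcal M_\sigma=S_\sigma/G_\sigma$. Here $G_\sigma$ is the group of source unitaries $U$ with $q\circ U=q$, acting on $H$ by congruence through the induced action on $\mathcal W$. This gives
\[
\dim\mathcal M_\sigma=25-\dim G_\sigma+\dim(\text{generic stabilizer}).
\]
I would then compute the identity component of $G_\sigma$ case by case:
\begin{itemize}[leftmargin=*]
\item If $\sigma_1>\sigma_2>0$, then $U$ must be diagonal with entries $\pm1$, so $G_\sigma$ is finite.
\item If $\sigma_1=\sigma_2>0$, then $q$ is a multiple of $z_1^2+z_2^2$, and $G_\sigma=O(2)$ (times $\pm1$), which has dimension $1$.
\item If $\sigma_1>0=\sigma_2$, then $U=\diag(\pm1,e^{i\theta})$, which has dimension $1$.
\item If $\sigma=0$, then $G_\sigma=U(2)$, which has dimension $4$.
\end{itemize}
This yields $25,24,24,21$, provided the action is generically locally free. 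To show this, I would exhibit one full-rank $H$ with trivial infinitesimal stabilizer. The natural choice is a generic interior point, where one checks that no nonzero element of the Lie algebra of $G_\sigma$ annihilates $H$ under the induced congruence action on $\mathcal W$. This works because the orbit map is real-analytic and the interior is open, so a single such point suffices.

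For the total space, the parameter region $\Sigma$ is open and two-dimensional in its interior $\sigma_1>\sigma_2>0$, which contributes $25+2=27$. The diagonal and the axis are one-dimensional families with fiber dimension $24$, giving $25$ each, and the polynomial point gives $21$. The main obstacle is the linear-algebra count: showing that the sphere identity imposes exactly $56$ independent real conditions on $H$, and that the generic stabilizer is discrete in each case.
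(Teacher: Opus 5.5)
Your overall plan matches the paper's. You use a $25$-dimensional affine space of Gram matrices whose interior is nonempty thanks to the full-rank point of Corollary~\ref{cor:full-rank}. You quotient by $G_\sigma$ with $\dim G_\sigma=0,1,1,4$, and then add the $2,1,1,0$ base dimensions. The step you single out as the main obstacle is routine, and your route to it is a valid alternative to the paper's $(1-r)$-factorization in Lemma~\ref{lem:gram-kernel}. Homogenizing the sphere identity Fourier mode by mode gives conditions in bidegrees $(3,1)$, $(3,2)$, $(3,3)$, which is $16+24+16=56$ real conditions. These conditions determine the cubic blocks $H_{13}=T$, $H_{23}$, $H_{33}$ uniquely and affinely from the free blocks $H_{11},H_{12},H_{22}$, that is, from the $5\times5$ block $H_{\leq2}$ on $\mathcal W_2$. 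Hence $H\mapsto H_{\leq2}$ is an affine isomorphism $\mathcal A_{g_\sigma}\to\Herm_5$, uniformly in $\sigma$.

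The genuine gap is the stabilizer step. For $\sigma_1>\sigma_2>0$ nothing is needed, because $G_\sigma$ is finite. In the other three cases, however, your formula only gives $\dim\mathcal M_\sigma\geq 25-\dim G_\sigma$ until you produce one full-rank point with discrete stabilizer. Saying ``a generic interior point, where one checks'' assumes exactly what has to be shown. Your reduction to a single point is fine, but you never supply that point, and the explicit full-rank points available are the worst possible candidates. At $\sigma=(0,0)$ the Gram matrix $\diag(aI_2,bI_3,cI_4)$ from Corollary~\ref{cor:full-rank} is fixed by all of $U(2)$. The diagonal-$X$ constructions of Section~\ref{sec:full-rank} on the diagonal (where $s=t$) and on the axis are likewise fixed by the circle in $G_\sigma$.

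The missing idea is to perturb inside $\mathcal A_{g_\sigma}$ in a direction with trivial stabilizer. The paper does this in four steps.
\begin{enumerate}
\item It Haar-averages a full-rank $H_0$ over $G_\sigma$, which preserves the convex set $\mathcal S^{\mathrm{fr}}_{g_\sigma}$, to get an invariant full-rank $\overline H$.
\item It observes that $G_\sigma$ acts on $\ker\mathcal L\cong\Herm_5$ by $R\mapsto\rho_{\leq2}(U)^*R\rho_{\leq2}(U)$.
\item It takes $K_0$ corresponding to an $R_0$ with trivial $U(2)$-stabilizer. The upper-left block $\diag(1,2)$ forces $U$ to be diagonal, and the entries coupling $z_1$ with $z_1^2$ and $z_2$ with $z_2^2$ kill the phases.
\item Then $\operatorname{Stab}(\overline H+tK_0)=\operatorname{Stab}(K_0)$ is trivial, and $\overline H+tK_0$ is still full rank for small $t$.
\end{enumerate}

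In your coordinates this is even quicker. Since $\rho(U)$ is block diagonal by degree, the map $H\mapsto H_{\leq2}$ is $G_\sigma$-equivariant for the congruence action on $\Herm_5$, so $\operatorname{Stab}(H)=\operatorname{Stab}(H_{\leq2})$. The set of $R\in\Herm_5$ with trivial infinitesimal stabilizer is the complement of a real algebraic set. It is nonempty because it contains $R_0$, so it meets the nonempty open image of $\mathcal S^{\mathrm{fr}}_{g_\sigma}$. Once you insert this, your argument is complete.
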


Here dimension means the dimension of the open dense full-rank principal
quotient stratum. Thus the lower-rank maps included in $\mathcal M_\sigma$
do not change the dimension. The generic value $27$ consists of $25$ Gram
parameters and two denominator parameters. The dimensions drop precisely
when the denominator has additional unitary symmetry, because the residual
source group is then larger.

Finally, the same Gram calculation admits a partial extension to $n$ source
variables, which we record in Section~\ref{sec:higher-dimensional} for
comparison. For $g=1+\sum_{j=1}^n\sigma_jz_j^2$ with
$0\leq\sigma_j<1$ , we prove that
$
\sum_{j=1}^n\sqrt{1-\sigma_j^2}>n-1
$
is sufficient for a reduced pointed realization of degree three
(see Theorem~\ref{thm:higher-sufficiency}). It is sharp for $n=2$ but not
necessary for $n>2$. On the symmetric ray
$\sigma_1=\cdots=\sigma_n=\sigma$, the sharp condition is
$
\sigma<\sqrt{3/(n+2)}.
$
See Theorem~\ref{thm:symmetric-higher}. Thus the two-dimensional square-root
condition does not extend as a necessary condition in higher dimensions.

The paper is organized as follows. Section~\ref{sec:gram} derives the
finite-dimensional Gram criterion, and Section~\ref{sec:feasibility} proves
Theorem~\ref{thm:main}. Section~\ref{sec:full-rank} constructs full-rank
maps and determines all allowable minimal target dimensions.
Section~\ref{sec:spectrahedron} develops the Gram normal form and proves the
moduli-space theorem. Section~\ref{sec:higher-dimensional} concludes with
the higher-dimensional comparison.

\paragraph{Acknowledgments.}
Part of this work was carried out at the Institute for Advanced Study,
Princeton, during the Workshop on Several Complex Variables and Related
Topics. We thank the Institute for its hospitality and the Minerva Research
Foundation for its support of the workshop.

\paragraph{Use of AI tools.} The authors used various versions of OpenAI’s ChatGPT during the course of this work. The tools were used to explore and test possible proof strategies and to assist with editing the exposition. The authors carefully verified, refined, and expanded all AI output used. The authors take full responsibility for the correctness, originality, and content of the paper.

\section{The Gram-matrix formulation}\label{sec:gram}

Set
\[
r=\|z\|^2=|z_1|^2+|z_2|^2,
\qquad
q=\sigma_1z_1^2+\sigma_2z_2^2,
\qquad
g=1+q
\]
and introduce the normalized monomial columns
\begin{equation}\label{eq:monomial-columns}
V_1=\begin{pmatrix}z_1\\z_2\end{pmatrix},
\qquad
V_2 =
\begin{pmatrix}
z_1^2\\
\sqrt{2} \, z_1 z_2\\
z_2^2
\end{pmatrix},
\qquad
V_3 =
\begin{pmatrix}
z_1^3\\
\sqrt{3} \, z_1^2 z_2\\
\sqrt{3} \, z_1 z_2^2\\
z_2^3
\end{pmatrix}.
\end{equation}
Let $^{*}$ denote the conjugate transpose. We use the Hermitian inner
product that is linear in the first variable, so
$\langle U,V\rangle=V^*U$. Then $V_j^*V_j=r^j$ for $j=1,2,3$.
Multiplication by $q$ from
linear to cubic forms is represented by
\begin{equation}\label{eq:qT}
qV_1=TV_3,
\qquad
T=
\begin{pmatrix}
\sigma _1&0&\dfrac{\sigma _2}{\sqrt3}&0\\[5pt]
0&\dfrac{\sigma _1}{\sqrt3}&0&\sigma _2
\end{pmatrix}.
\end{equation}
In particular,
\begin{equation}\label{eq:T-norm}
\|qV_1\|^2=V_3^*T^*TV_3=|q|^2r.
\end{equation}
If $(\sigma _1,\sigma _2)\neq(0,0)$, then $T$ has rank two.

For a positive diagonal matrix
$
X=\diag(x,y),
$
let $S_X$ be determined by
\begin{equation}\label{eq:S-def}
V_3^* S_X V_3 = r^2 V_1^* X V_1.
\end{equation}
Thus
\begin{equation}\label{eq:S-explicit}
S_X=
\diag\left(x,\frac{2x+y}{3},\frac{x+2y}{3},y\right).
\end{equation}
Define
\begin{equation}\label{eq:MX}
M_X=I_4+T^*T-S_X-T^*X^{-1}T.
\end{equation}
Here and below, $H\geq0$ means that the Hermitian matrix $H$ is
positive semidefinite, while $H>0$ means that it is positive definite.

Since $S_{I_2}=I_4$, we have $M_{I_2}=0$. The following lemma shows that, when $q$ is nonconstant, $I_2$ is the only positive diagonal matrix $X$ for which $M_X=0$.

\begin{lemma}\label{lem:graph-point}
Assume $(\sigma _1,\sigma _2)\neq(0,0)$. If $X>0$ is diagonal and
$M_X=0$, then $X=I_2$.
\end{lemma}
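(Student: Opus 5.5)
The plan is to compute $M_X$ entrywise for $X=\diag(x,y)$ and then show that the vanishing of a few specific entries forces $x=y=1$. The key simplification is that $I_4+T^*T-T^*X^{-1}T=I_4+T^*(I_2-X^{-1})T$. Setting $a=1-1/x$ and $b=1-1/y$, we have $I_2-X^{-1}=\diag(a,b)$, and hence
\[
M_X=I_4-S_X+T^*\diag(a,b)\,T .
\]

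Next I compute the entries of $T^*\diag(a,b)T$ from the columns of $T$ in \eqref{eq:qT}. The columns are $(\sigma_1,0)^t$, $(0,\sigma_1/\sqrt3)^t$, $(\sigma_2/\sqrt3,0)^t$ and $(0,\sigma_2)^t$. Only the pairs of columns supported in the same coordinate interact, so combining with \eqref{eq:S-explicit} we get the following.
\begin{itemize}[leftmargin=2em]
\item The diagonal entries of $M_X$ are
\[
1-x+\sigma_1^2a,\qquad 1-\tfrac{2x+y}{3}+\tfrac{\sigma_1^2}{3}b,\qquad 1-\tfrac{x+2y}{3}+\tfrac{\sigma_2^2}{3}a,\qquad 1-y+\sigma_2^2 b .
\]
\item The only nonzero off-diagonal entries (up to Hermitian symmetry) are $(M_X)_{13}=\sigma_1\sigma_2a/\sqrt3$ and $(M_X)_{24}=\sigma_1\sigma_2 b/\sqrt3$.
\end{itemize}

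The roles of $z_1$ and $z_2$ are symmetric, so I may assume without loss of generality that $\sigma_1>0$. I then split into two cases according to $\sigma_2$.

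\emph{Case $\sigma_2>0$.} The vanishing of the off-diagonal entries gives $a=b=0$ directly, that is, $x=y=1$.

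\emph{Case $\sigma_2=0$.} The $(4,4)$ entry reduces to $1-y=0$, so $y=1$. The $(2,2)$ entry then becomes $1-\tfrac{2x+1}{3}+0=\tfrac{2(1-x)}{3}$, whose vanishing gives $x=1$.

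As a consistency check, the $(1,1)$ entry factors as $(1-x)(1-\sigma_1^2/x)$. Taken alone, this entry would also permit the spurious root $x=\sigma_1^2$, which is why the argument must use the other entries rather than this one.

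No step is a genuine obstacle. The only care needed is bookkeeping the products of columns of $T$ correctly (including the $\sqrt3$ normalizations) and noticing that the $(1,1)$ entry by itself is insufficient. The hypothesis $(\sigma_1,\sigma_2)\neq(0,0)$ is exactly what makes one of the two cases apply. When both parameters vanish, $T=0$, so $M_X=I_4-S_X$, and this matrix also vanishes only at $X=I_2$; the hypothesis is therefore needed only to keep the case split well defined.
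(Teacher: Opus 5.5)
Your proposal is correct and follows essentially the same route as the paper: compute the entries of $M_X$, use the off-diagonal $(1,3)$ and $(2,4)$ entries when $\sigma_1\sigma_2>0$, and use two diagonal entries in the axis case. The only difference is bookkeeping: after normalizing to $\sigma_2=0$ you use the $(4,4)$ entry and then the $(2,2)$ entry, whereas the paper treats $\sigma_1=0$ with the $(1,1)$ entry and then the $(2,2)$ entry. Your closing remark is also correct: at $\sigma=(0,0)$ one has $M_X=I_4-S_X$, which vanishes only when $X=I_2$, so the hypothesis $(\sigma_1,\sigma_2)\neq(0,0)$ is not actually needed for this lemma.
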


\begin{proof}
Write $X=\diag(x,y)$. The $(1,1)$ and $(4,4)$ entries of
$M_X=0$ give
\[
(1-x)\left(1-\frac{\sigma _1^2}{x}\right)=0,
\qquad
(1-y)\left(1-\frac{\sigma _2^2}{y}\right)=0.
\]
If $\sigma _1\sigma _2>0$, the $(1,3)$ and $(2,4)$ entries are
\[
\frac{\sigma _1\sigma _2}{\sqrt3}\left(1-\frac1x\right)\quad
\text{and}\quad
\frac{\sigma _1\sigma _2}{\sqrt3}\left(1-\frac1y\right),
\]
respectively. so $x=y=1$. If $\sigma _1=0<\sigma _2$, the first diagonal identity
gives $x=1$, and the $(2,2)$ entry then gives $y=1$. The remaining
axis case is symmetric.
\end{proof}

We next record two elementary facts concerning reducedness and common factors, the first of which will also be used in the proof of Theorem~\ref{thm:main}.

\begin{lemma}\label{lem:zero-free-denominator}
Let $P/g$ be a reduced nonconstant rational sphere map. Then $g$ has no
zero on the closed unit ball $\overline{\B^2}$.
\end{lemma}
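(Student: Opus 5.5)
The plan is to deduce the statement from the sphere identity $\|P\|^2=|g|^2$ on $\partial\B^2$ together with reducedness. There are two separate regions: the open ball, where I will use the maximum principle, and the sphere, where I will use a common-factor argument.

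\emph{Step 1 (interior).} Suppose $g(z_0)=0$ for some $z_0\in\B^2$. The zero set $Z(g)$ is then a complex curve meeting $\B^2$. Since $P/g$ is reduced, $P$ and $g$ share no common factor, so not every $p_j$ vanishes identically on $Z(g)\cap\B^2$. Near a regular point $w$ of $Z(g)$ at which some $p_j(w)\neq0$, the function $|p_j/g|$ is unbounded. Hence $\|F\|$ is unbounded on $\B^2$.

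On the other hand, $\|P\|^2-|g|^2$ vanishes on $\partial\B^2$. The plan is to show that $\|P\|^2\le|g|^2$ on all of $\overline{\B^2}$, which would contradict the unboundedness and so rule out interior zeros. I would argue this as follows. Away from $Z(g)$, the function $\|F\|^2$ is plurisubharmonic with boundary value $1$. Near $Z(g)$ it blows up, so the maximum principle does not apply directly, and this is the delicate point.

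The cleanest route is to cite the Cima--Suffridge theorem \cite{CimaSuffridge}, already invoked in the introduction. It says a rational map sending the sphere into the sphere, in reduced form, has no poles on $\overline{\B^2}$. Applying it directly gives the lemma for both interior and boundary points. So my primary plan is simply to invoke \cite{CimaSuffridge}, and to check that its hypotheses hold: $F$ is nonconstant, and it is written in reduced form with $g$ not identically zero.

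\emph{Step 2 (a self-contained alternative).} If I wanted to avoid the citation, I would argue on complex lines through the origin. Fix a unit vector $\zeta$ and set $h(t)=g(t\zeta)$ and $p(t)=P(t\zeta)$. On $|t|=1$ we have $\|p(t)\|=|h(t)|$. By reflection, $\|p(t)\|^2=|h(t)|^2$ continues to all $t$ as the identity $\langle p(t),p(1/\bar t)\rangle=h(t)\overline{h(1/\bar t)}$.

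If $h(t_0)=0$ with $|t_0|\le1$, evaluate at $t_0$. If also $p(t_0)\neq0$, this produces the required contradiction. The same reflection trick yields the zero-free property in one variable, namely that common zeros cancel.

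The main obstacle is handling points where $P$ and $g$ vanish simultaneously, that is, indeterminacy points. These exist in two variables even when the representation is reduced. Here I would use the fact that such points are isolated, so there are finitely many of them. A zero of $g$ in $\overline{\B^2}$ lies on a whole curve, and that curve contains non-indeterminate points in $\overline{\B^2}$ arbitrarily close by. This is clear for interior points. For boundary points I would use that $Z(g)$ cannot be tangent to the sphere along a curve, and then apply the line argument at a nearby non-indeterminate zero. Since this last step is exactly the content of Cima--Suffridge, I would ultimately rely on \cite{CimaSuffridge}.
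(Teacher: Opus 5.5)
Your proof ultimately rests on citing Cima--Suffridge for the whole closed ball. The paper cites it only for zeros on $\partial\B^2$ and handles the open ball by hand. As written, there are two problems.

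\emph{The hypothesis you check is not the one Cima--Suffridge needs.} As the paper uses it, the theorem concerns rational \emph{proper}, in particular holomorphic, maps of the ball. Your paraphrase (``a rational map sending the sphere into the sphere, in reduced form, has no poles on $\overline{\B^2}$'') drops this. Without it the statement already fails in one variable: $(1-\bar a t)/(t-a)$ with $0<|a|<1$ is reduced, satisfies $|p|=|h|$ on $|t|=1$, and has a pole at $a$. So what you must verify is holomorphy (properness) of $F$ on $\B^2$, which is built into the notion of sphere map here. Nonconstancy and $g\not\equiv0$ are not enough.

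\emph{With that hypothesis, the interior case needs neither a maximum principle nor a citation.} Your own Step 1 observation already gives a contradiction. Take a regular point $w\in Z(h)\cap\B^2$ off the other factors of $g$ with some $p_j(w)\neq0$; then $|p_j/g|$ blows up near $w$, which contradicts holomorphy of $F$ at $w$. This is exactly the paper's argument. There it is phrased as follows: holomorphy forces $p_j=g\,(p_j/g)$ to vanish on a relatively open piece of the irreducible curve $Z(h)$, and the identity theorem plus the Nullstellensatz give $h\mid p_j$ for all $j$, contradicting reducedness.

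If you prefer to avoid holomorphy, the sphere identity alone suffices in two variables. A curve $Z(h)$ meeting $\B^2$ is unbounded and has connected regular locus, so it must cross $\partial\B^2$ in a set with an accumulation point among its regular points. There $P$ vanishes by the sphere identity, so again $h\mid p_j$.

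\emph{Step 2 does not work and cannot replace the citation.} At an interior $t_0$ with $h(t_0)=0$, the polarized identity gives only $\langle p(t_0),p(1/\bar t_0)\rangle=0$, that is, $p(t_0)\perp p(1/\bar t_0)$. This is no contradiction even when $p(t_0)\neq0$: in the example above, $p(1/\bar a)=0$. Correspondingly, the reflection trick does not give interior zero-freeness in one variable.

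At the boundary, the sphere identity forces $P(w)=0$ whenever $g(w)=0$ and $\|w\|=1$. So every boundary zero of $g$ is already an indeterminacy point. If $Z(g)$ meets $\overline{\B^2}$ only at such a point (as the line $\{z_1=1\}$ meets it only at $(1,0)$), there is no nearby non-indeterminate zero in the closed ball to apply anything to. Your remark about tangency along a curve is therefore beside the point. This touching-from-outside case is precisely the content of Cima--Suffridge, which the paper also cites.

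So the correct form of your proposal is the elementary interior argument via holomorphy, plus Cima--Suffridge on $\partial\B^2$. That is the paper's proof.
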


\begin{proof}
Suppose that an irreducible factor $h$ of $g$ has a zero in $\B^2$.
Choose a smooth point of the hypersurface $\{h=0\}$ in the ball that does
not lie on any other irreducible factor of $g$. Holomorphy of every
component of $P/g$ forces each scalar component of $P$ to vanish on a
relative neighborhood of that point in $\{h=0\}$. The identity theorem
on the irreducible hypersurface, followed by the Nullstellensatz, gives
$h\mid P_j$ for every component $P_j$, contrary to reducedness. Thus $g$
has no zero in $\B^2$. The
absence of zeros on $\partial\B^2$ is the boundary theorem of Cima and
Suffridge \cite{CimaSuffridge}.
\end{proof}

\begin{lemma}\label{lem:homogeneous-divisor}
Let $h$ be a nonconstant polynomial with $h(0)\neq0$. Then $h$
cannot divide a nonzero homogeneous polynomial.
\end{lemma}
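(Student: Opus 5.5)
The plan is to use the homogeneous decomposition of the dividend together with unique factorization in $\C[z_1,\ldots,z_n]$; the argument works in any number of variables. Suppose $h\mid f$, where $f\neq0$ is homogeneous of degree $d$. Write $f=hk$. The key observation is that every divisor of a homogeneous polynomial is itself homogeneous. Then $h$, being homogeneous and nonconstant, would vanish at $0$, contradicting $h(0)\neq0$.

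To prove the observation, write $h=h_a+\cdots+h_b$ and $k=k_c+\cdots+k_e$ as sums of homogeneous components. Here $h_a,h_b$ are the nonzero components of lowest and highest degree, and similarly $k_c,k_e$ for $k$. Since $\C[z]$ is an integral domain, the lowest-degree component of $hk$ is $h_ak_c\neq0$, of degree $a+c$. Likewise the highest-degree component is $h_bk_e\neq0$, of degree $b+e$. Because $hk=f$ is homogeneous of degree $d$, we get $a+c=d=b+e$. Combined with $a\le b$ and $c\le e$, this forces $a=b$ and $c=e$, so $h$ is homogeneous of degree $a$.

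Finally, $h$ is nonconstant, so $a\geq1$, and hence $h(0)=h_a(0)=0$, a contradiction. I do not expect any real obstacle here. The only point requiring care is the use of the integral-domain property to guarantee that the extreme-degree products $h_ak_c$ and $h_bk_e$ do not vanish.

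An alternative route I could give instead is scaling. Since $f(tz)=t^df(z)$, the zero set of $f$ is a complex cone. Any point with $h(z)=0$, where $z\neq0$ (such points exist because $h$ is nonconstant and $n\geq2$), lies on $\{f=0\}$, so the whole line $\C z$ lies in $\{f=0\}$. That approach would require working with irreducible factors of $h$ via the Nullstellensatz, so I will use the degree argument above instead.
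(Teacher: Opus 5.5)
Your proof is correct and is essentially the paper's argument: both compare the lowest- and highest-degree homogeneous components of $hk$, which are nonzero because $\C[z]$ is an integral domain. The paper uses $h(0)\neq0$ directly to see that these extreme components have different degrees, whereas you first show that every divisor of a homogeneous polynomial is homogeneous and then conclude; this is only a repackaging, and the unique factorization you mention at the start is never actually used.
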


\begin{proof}
Write $h=h_0+\cdots+h_e$ as a sum of homogeneous parts, where
$h_0\neq0$, $h_e\neq0$, and $e\geq1$. If $H=hk$ were nonzero
and homogeneous, let $k_m$ and $k_M$ be the lowest and highest nonzero
homogeneous parts of $k$. The lowest part of $hk$ is $h_0k_m$, while
the highest is $h_e k_M$. These are nonzero and have different degrees,
a contradiction.
\end{proof}

We now relate the existence of a reduced sphere map with denominator $g$ to positivity of the matrix $M_X$. The idea is to encode the linear part of the numerator by its Gram matrix $X$. After a symmetrization, $X$ may be taken to be diagonal, while the remaining homogeneous terms are encoded by the residual matrix $M_X$. This leads to the following criterion.

\begin{theorem}[Gram criterion]\label{thm:gram}
Assume
$
0\leq\sigma _1,\sigma _2<1
$ 
and 
$
(\sigma _1,\sigma _2)\neq(0,0).
$
The following are equivalent.
\begin{enumerate}[label=\textup{(\arabic*)}]
\item There is a polynomial $P$, with $P(0)=0$ and $\deg P=3$,
such that $P/g$ is a reduced sphere map.
\item There is a positive diagonal matrix $X$ such that
$M_X\geq0$ and $M_X\neq0$.
\end{enumerate}
\end{theorem}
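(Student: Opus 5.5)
Throughout we use D'Angelo's identities \eqref{eq:dangelo-intro}, written in the monomial bases \eqref{eq:monomial-columns}. Write $P_1=AV_1$, $P_2=BV_2$, $P_3=CV_3$ with $A,B,C$ matrices having $N$ rows. Then
\[
\langle P_3,P_1\rangle=V_1^*A^*CV_3 .
\]
The first identity says this equals $r^{-1}qV_1^*\ldots$; using $qV_1=TV_3$ and the fact that multiplication by $r$ is injective on polynomials, it becomes the coefficient identity $A^*C=T$ modulo the kernel of the map $V_1^*YV_3\mapsto$ polynomial. Concretely, we must identify which $2\times 4$ matrices $Y$ satisfy $\overline{V_1}^{\,t}YV_3=r\,\overline{V_1}^{\,t}TV_3/r$. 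Degree bookkeeping (bidegree $(1,3)$ must equal $r\cdot$ bidegree $(0,2)$) forces this. The third identity, expanded with $\|P_2\otimes z\|^2=V_3^*\widetilde B V_3$ (where $\widetilde B\geq0$ is built from $B^*B$ via multiplication by $r$) and $\|P_1\otimes z^{\otimes2}\|^2=r^2V_1^*A^*AV_1=V_3^*S_{A^*A}V_3$, reads
\[
C^*C+\widetilde B+S_{A^*A}=I_4+T^*T
\]
as Hermitian forms in $V_3$. Since $V_3$ realizes all bidegree $(3,3)$ forms injectively on $4\times4$ Hermitian matrices, this is a matrix identity. Here $S_X$ is defined for general $X$ by \eqref{eq:S-def}.

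\textbf{(1)$\Rightarrow$(2).} Set $X_0=A^*A$. First, $X_0>0$: otherwise some nonzero linear form $\ell$ has $A\ell$-direction trivial. Then $A^*C=T$ of rank two forces $\operatorname{rank}A\ge2$, so $X_0>0$ automatically. Next, Cauchy–Schwarz in block form gives $C^*C\geq C^*A(A^*A)^{-1}A^*C=T^*X_0^{-1}T$, since $\begin{pmatrix}A^*A&A^*C\\C^*A&C^*C\end{pmatrix}\geq0$. Hence
\[
I_4+T^*T-S_{X_0}-T^*X_0^{-1}T=\widetilde B+\bigl(C^*C-T^*X_0^{-1}T\bigr)\geq0 .
\]
To diagonalize, use the sign symmetry $z\mapsto(z_1,-z_2)$, which preserves $g$ (here $r$ is invariant and $q$ is invariant). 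Replacing $P$ by $\frac1{\sqrt2}(P\oplus P\circ\tau)$ gives another reduced sphere map with the same denominator, and its $X$ is the average of $X_0$ and $DX_0D$ with $D=\diag(1,-1)$, which is diagonal. The map $X\mapsto M_X$ is concave (because $X\mapsto -T^*X^{-1}T$ is operator concave and $S_X$ is linear), so positivity is preserved under averaging. More simply, the symmetrized map satisfies the same inequality directly.

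For $M_X\neq0$: if $M_X=0$ with $X$ diagonal, then Lemma~\ref{lem:graph-point} gives $X=I_2$, and then $\widetilde B=0$ (so $P_2=0$) and $C^*C=T^*A^*\ldots$, equality in Cauchy–Schwarz. This means the columns of $C$ lie in the range of $A$, i.e. $C=AT$. Then $P_3=ATV_3=qAV_1=qP_1$, so $P=(1+q)P_1=gP_1$, contradicting reducedness (or degree three). Reducedness must be carried through the symmetrization; the symmetrized map $P\oplus P\circ\tau$ is still reduced since any common factor divides $P$.

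\textbf{(2)$\Rightarrow$(1).} Given diagonal $X>0$ with $M_X\geq0$, $M_X\ne0$, take $P_2=0$, $A=X^{1/2}$ padded with zero rows, $C=X^{-1/2}T$ stacked with $R$, where $R^*R=M_X$, and let $P_1=AV_1$, $P_3=CV_3$. The identities \eqref{eq:dangelo-intro} then hold by construction, so $P/g$ is a sphere map, and $\deg P=3$ because $R\neq0$. The main obstacle is reducedness. A common factor $h$ of $g$ and all components divides $g$, so $h(0)\ne0$. A component of the form $R$-row$\cdot V_3$ is nonzero homogeneous, so by Lemma~\ref{lem:homogeneous-divisor} it cannot be divisible by $h$. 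Hence $P/g$ is reduced.
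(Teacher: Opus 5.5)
Your proposal is correct and follows the paper's proof essentially step for step. The shared steps are the coefficient identities $A^*C=T$ and $C^*C+\widetilde B+S_{X}=I_4+T^*T$, symmetrization by a sign flip to make $X$ diagonal, Lemma~\ref{lem:graph-point} together with the equality case to reach $P=gP_1$, and the same numerator $P_X$ with Lemma~\ref{lem:homogeneous-divisor} for reducedness. Your Schur-complement bound $C^*C\geq T^*X^{-1}T$ is exactly the paper's orthogonal splitting $B=B_0+AX^{-1}T$.

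Three small touch-ups:
\begin{itemize}
\item The map $Y\mapsto V_1^*YV_3$ is injective, so $A^*C=T$ follows directly and there is no ``kernel'' to discuss.
\item In (2)$\Rightarrow$(1), add the one-line check $|q(z)|\le\max(\sigma_1,\sigma_2)\|z\|^2<1$ on $\overline{\B^2}$, so that $P/g$ is actually holomorphic there.
\item $gP_1$ does have degree three, so only reducedness, not degree, is contradicted.
\end{itemize}
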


\begin{proof}
Suppose first that $P/g$ is a reduced sphere map. We begin by making its
linear Gram matrix diagonal. Let
$
D_1=\diag(-1,1)
$
and replace $P$ by
\begin{equation}\label{eq:numerator-symmetrization}
\widehat P(z)=\frac1{\sqrt2}\bigl(P(z)\oplus P(D_1z)\bigr).
\end{equation}
Since $g(D_1z)=g(z)$, the map $\widehat P/g$ is again a sphere map.
It has the same degree as $P$ and is reduced, because any common factor of
$g$ and the components of $\widehat P$ would also divide all components of
$P$. If the linear part of the original numerator is $A_0V_1$, then the
linear coefficient matrix in \eqref{eq:numerator-symmetrization} is
$
A=\frac1{\sqrt2}
\begin{pmatrix}A_0\\ A_0D_1\end{pmatrix},
$
and hence
$
A^*A=\frac12\bigl(A_0^*A_0+D_1A_0^*A_0D_1\bigr)
$
is diagonal. We now suppress the hat and write
\[
P=P_1+P_2+P_3,
\qquad
P_1=AV_1,
\qquad
P_3=BV_3.
\]

To translate D'Angelo's equations into this notation, we repeat the
two-variable Fourier comparison from
\cite{DAngeloRationalCR}*{Theorem~3.1 and Corollary~3.1}.
For $z\in\partial\B^2$, apply the sphere identity at
$e^{i\theta}z$. Note that
\[
P_j(e^{i\theta}z)=e^{ij\theta}P_j(z),
\qquad
q(e^{i\theta}z)=e^{2i\theta}q(z).
\]
Comparison of the $e^{2i\theta}$-coefficients gives
$
P_1^*P_3=q $ on $
r=1.
$
By homogeneity, this is equivalent to
$P_1^*P_3=qr$. Using \eqref{eq:qT} and comparing bihomogeneous monomials, we obtain $A^*B=T$. Since $T$ has rank two, $A$ has rank two. Hence $X:=A^*A$
is positive definite and diagonal.
On the other hand, comparison of the $e^{i\theta}$-coefficients gives
\[
P_1^*P_2+P_2^*P_3=0
\qquad\text{on }r=1.
\]
Homogenizing the two terms to the same bidegree gives the global identity
\begin{equation}\label{eq:mixed-Fourier}
rP_1^*P_2+P_2^*P_3=0.
\end{equation}
The constant Fourier coefficient in the sphere identity gives
\[
\|P_1\|^2+\|P_2\|^2+\|P_3\|^2=1+|q|^2
\qquad\text{on }r=1.
\]
Homogenizing to bidegree $(3,3)$, we obtain
\begin{equation}\label{eq:constant-Fourier}
r^2\|P_1\|^2+r\|P_2\|^2+\|P_3\|^2
=r^3+r|q|^2.
\end{equation}
Since $V_1\otimes P_2$ is a vector of homogeneous cubics, there is a
matrix $K$ such that
$
V_1\otimes P_2=KV_3
$.
Hence
\begin{equation}\label{eq:G2}
r\|P_2\|^2
=\|V_1\otimes P_2\|^2
=V_3^*K^*KV_3.
\end{equation}
Substituting \eqref{eq:S-def}, \eqref{eq:T-norm}, and \eqref{eq:G2}
into \eqref{eq:constant-Fourier} gives
\[
B^*B+K^*K=I_4+T^*T-S_X.
\]
Set $B_0:=B-AX^{-1}T$.
Since $A^*B=T$ and $A^*A=X$, we have $A^*B_0=0$, which implies that the two
terms in $B=B_0+AX^{-1}T$ are orthogonal, and $B^*B=B_0^*B_0+T^*X^{-1}T$.
It follows that
\begin{equation}\label{eq:MX-orthogonal}
M_X=B_0^*B_0+K^*K\geq0.
\end{equation}

If $M_X=0$, both positive semidefinite terms in
\eqref{eq:MX-orthogonal} vanish. Thus $K=0$ and $B_0=0$, so $P_2=0$ and
$B=AX^{-1}T$.
Lemma~\ref{lem:graph-point} gives $X=I_2$, so we have
\[
B=AT,
\qquad
P_3=ATV_3=qP_1.
\]
Thus $P=gP_1$, contrary to reducedness. Hence $M_X\neq0$.

Conversely, we construct a numerator directly from the Gram data. Suppose that $X>0$ is diagonal, $M_X\geq0$, and
$M_X\neq0$. Define
\begin{equation}\label{eq:canonical-MX-numerator}
P_X=
\left(X^{1/2}V_1+X^{-1/2}TV_3\right)
\oplus M_X^{1/2}V_3.
\end{equation}
On $r=1$, equations \eqref{eq:S-def}, \eqref{eq:qT}, and
\eqref{eq:MX} give
\begin{align*}
\|P_X\|^2
&=V_3^*\bigl(S_X+T^*X^{-1}T+M_X\bigr)V_3+q+\bar q\\
&=V_3^*(I_4+T^*T)V_3+q+\bar q\\
&=1+|q|^2+q+\bar q
=|g|^2.
\end{align*}
Furthermore,
\[
|q(z)|\leq\max(\sigma _1,\sigma _2)\,\|z\|^2<1,
\qquad z\in\B^2,
\]
so $g$ has no zero in $\B^2$. Thus $P_X/g$ is holomorphic in the ball
and is a sphere map.

Since $M_X\neq0$, the second summand in
\eqref{eq:canonical-MX-numerator} contains a nonzero homogeneous cubic,
so $P_X$ has degree three. If a nonconstant factor $h$ of $g$ divided
every scalar component of $P_X$, projection onto the second target summand
would show that $h$ divides a nonzero homogeneous cubic. This contradicts
Lemma~\ref{lem:homogeneous-divisor}, since $h(0)\neq0$. Hence $P_X/g$ is
reduced. The construction uses at most six scalar components.
\end{proof}

\section{The sharp parameter region}
\label{sec:feasibility}

By Theorem~\ref{thm:gram}, it remains to determine for which parameter pairs $(\sigma_1, \sigma_2)$ there exists a positive diagonal matrix $X$ with $M_X \geq 0$ and $M_X \neq 0$. We now solve this positivity problem explicitly.

\begin{theorem}[Sharp range and strict feasibility]
\label{thm:sharp-feasibility}
Assume
$
0\leq\sigma _1,\sigma _2<1
$ and $
(\sigma _1,\sigma _2)\neq(0,0).
$
The following are equivalent.
\begin{enumerate}[label=\textup{(\arabic*)}]
\item There is a diagonal $X>0$ such that
$M_X\geq0$ and $M_X\neq0$.
\item $\sqrt{1-\sigma _1^2}+\sqrt{1-\sigma _2^2}>1$.
\item There is a diagonal $X>0$ such that $M_X>0$.
\end{enumerate}
\end{theorem}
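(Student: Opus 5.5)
The plan is to compute $M_X$ explicitly for $X=\diag(x,y)$ and to use the fact that it decouples. The rows of $T$ in \eqref{eq:qT} are supported on the coordinate pairs $\{1,3\}$ and $\{2,4\}$ of $V_3$, and $S_X$ is diagonal. Hence $T^*T$, $T^*X^{-1}T$ and therefore $M_X$ are block diagonal, with a $2\times2$ block $M^{(1)}$ on $\{1,3\}$ and a block $M^{(2)}$ on $\{2,4\}$. Writing $u=1-x$ and $v=1-y$, the entries of $M^{(1)}$ are
\[
M^{(1)}_{11}=\frac{u(x-\sigma_1^2)}{x},\qquad
M^{(1)}_{33}=\frac13\Bigl(u+2v-\frac{\sigma_2^2u}{x}\Bigr),\qquad
M^{(1)}_{13}=-\frac{\sigma_1\sigma_2}{\sqrt3}\,\frac{u}{x}.
\]
$M^{(2)}$ is obtained by the symmetric substitution $1\leftrightarrow2$, $x\leftrightarrow y$, $u\leftrightarrow v$. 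A short expansion, in which the $\sigma_1^2\sigma_2^2$ terms cancel, gives
\[
\det M^{(1)}=\frac{u}{3x}\Bigl(u(x-\sigma_1^2-\sigma_2^2)+2v(x-\sigma_1^2)\Bigr),
\]
and the analogous formula holds for $\det M^{(2)}$. Positivity of $M_X$ thus reduces to sign conditions on the diagonal entries together with these two quantities.

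\emph{(2)$\Rightarrow$(3).} Put $a=1-\sigma_1^2$, $b=1-\sigma_2^2$, and note $1-\sigma_1^2-\sigma_2^2=a+b-1$. I will take $X$ close to $I_2$, with $u=\varepsilon$ and $v=t\varepsilon$ for a fixed $t>0$ and small $\varepsilon>0$.

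To first order in $\varepsilon$, both diagonal entries of each block are positive multiples of $\varepsilon$, since $x-\sigma_1^2\to a>0$. The bracketed determinant factors become $\varepsilon\bigl((a+b-1)+2ta\bigr)$ and $\varepsilon\bigl(t(a+b-1)+2b\bigr)$, up to $O(\varepsilon^2)$. Both leading coefficients are positive for a suitable $t$:
\begin{itemize}[label={}]
\item if $a+b\geq1$, any $t>0$ works;
\item if $c:=1-a-b>0$, we need $c/(2a)<t<2b/c$, which is possible exactly when $c^2<4ab$, i.e. $1<(\sqrt a+\sqrt b)^2$, which is condition (2).
\end{itemize}
So for small $\varepsilon$ both blocks are positive definite and $M_X>0$.

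\emph{(3)$\Rightarrow$(1)} is trivial.

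\emph{(1)$\Rightarrow$(2)} is the main obstacle, because there the inequalities must be handled globally in $(x,y)$, not just near $I_2$. The steps I would take:
\begin{enumerate}[label=\textup{(\roman*)}]
\item From $M^{(1)}_{11}\ge0$ and $M^{(2)}_{44}\ge0$, get $u(x-\sigma_1^2)\ge0$ and $v(y-\sigma_2^2)\ge0$. So either $x\le1$, or $x$ lies in the range $(0,\sigma_1^2]$ (which is nonempty only when $\sigma_1>0$).
\item Rule out the bad ranges using the $(3,3)$ entry $u+2v-\sigma_2^2u/x$ and its counterpart $v+2u-\sigma_1^2v/y$. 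The goal is to land in $\sigma_1^2\le x\le1$, $\sigma_2^2\le y\le1$.
\item Treat the degenerate cases $u=0$ or $v=0$ separately, using Lemma~\ref{lem:graph-point} and $M_X\neq0$. For instance, if $u=0$, block $1$ forces $v\ge0$. Then block $2$ forces $v>0$ and $2b$-type terms, which lead back to the same inequality.
\item When $u,v>0$, the two determinant conditions read
\[
u(x-\sigma_1^2-\sigma_2^2)+2v(x-\sigma_1^2)\ge0,\qquad v(y-\sigma_1^2-\sigma_2^2)+2u(y-\sigma_2^2)\ge0.
\]
Since $x,y\le1$, each inequality still holds after replacing $x$ and $y$ by $1$: the coefficients $(x-\sigma_1^2)$ and $(y-\sigma_2^2)$ only increase, and so do the terms $x-s$ and $y-s$.
\item This yields $u(a+b-1)+2va\ge0$ and $v(a+b-1)+2ub\ge0$ with $u,v>0$. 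If $a+b-1<0$, multiplying gives $c^2\le4ab$, i.e. $\sqrt a+\sqrt b\ge1$.
\item Obtain strict inequality by analysing the equality case $c^2=4ab$. Equality forces all the inequalities in (iv)--(v) to be equalities, in particular $x=y=1$, i.e. $X=I_2$. Then $M_X=0$, contradicting $M_X\neq0$.
\end{enumerate}
The monotonicity and equality bookkeeping in steps (iv)--(vi), together with excluding the ranges $x<\sigma_1^2$ in step (ii), is where I expect the real work to lie.
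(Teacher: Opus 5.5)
Your proposal is correct. For (1)$\Rightarrow$(2) it is the paper's argument written in the coordinates $u=s$, $v=t$. It uses the same $\{1,3\}\oplus\{2,4\}$ splitting and the same determinant factorization: your bracket is exactly $\Phi_{13}$ after substituting $x=1-u$. It also multiplies the two determinant inequalities in the same way and treats the same degenerate cases.

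The parts you flagged as the real work are in fact immediate:
\begin{itemize}
\item In step (i), $u(x-\sigma_1^2)\ge0$ already forces $\sigma_1^2\le x\le1$, because $u<0$ would give $x>1>\sigma_1^2$ and hence a negative product. So there is no bad range and step (ii) is vacuous. Step (iv) only needs $u,v\ge0$ in any case.
\item For $u,v>0$ you have the exact identity
\[
u(x-\sigma_1^2-\sigma_2^2)+2v(x-\sigma_1^2)=u(a+b-1)+2va-u(u+2v),
\]
and the analogous one for the second block. Hence the inequalities in (v) are strict, and multiplying them gives $c^2<4ab$ outright. The equality analysis in (vi) is not needed.
\item In (iii), with $u=0$ the surviving term of the second determinant is $v(y-\sigma_1^2-\sigma_2^2)$, not a ``$2b$-type'' term. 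It gives $a+b-1\ge v>0$. Here $v>0$ holds because $M_X\ne0$ rules out $X=I_2$, which uses only $M_{I_2}=0$, not the graph-point lemma.
\end{itemize}

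For (2)$\Rightarrow$(3) your route genuinely differs from the paper's. The paper exhibits an explicit strictly feasible point: $s=\kappa\alpha d$ and $t=\kappa\beta d$, with $d=\alpha+\beta-1$ and any $0<\kappa<1$. It then checks that the factorized $\Phi_{13}$ and $\Phi_{24}$ are positive.

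You instead leave $X=I_2$ along a ray $v=\tau u$ (your $t$). You observe that the linearized determinant factors $(a+b-1)+2\tau a$ and $\tau(a+b-1)+2b$ can be made simultaneously positive exactly when $c^2<4ab$. When $c>0$, the paper's ratio $\beta/\alpha$ lies in your interval $(c/(2a),2b/c)$, precisely because $c<2\alpha\beta$.

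Your version makes transparent that both directions are governed by the same linear cone at $X=I_2$. The exact determinant factors equal their linearizations minus $u(u+2v)$ and $v(v+2u)$. So any feasible point with $u,v>0$ gives an admissible direction, and any admissible direction gives strictly feasible points near $I_2$. The paper's explicit formula instead gives a concrete one-parameter family of strictly feasible $X$, valid for every $\kappa\in(0,1)$, with no limiting argument.
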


\begin{proof}
 Set
\begin{equation}
\alpha:=\sqrt{1-\sigma _1^2},
\qquad
\beta:=\sqrt{1-\sigma _2^2}.  
\end{equation}
Write $X=\diag(1-s,1-t),$ where $s,t\in\mathbb R$ are parameters to be determined and $s,t<1$. With respect to the index pairs $(1,3)$ and $(2,4)$, $M_X$ is the direct sum
\[
M_X=M^{(13)}\oplus M^{(24)},
\]
where
\begin{equation}\label{eq:block13}
M^{(13)}
=
\begin{pmatrix}
s&0\\[2pt]
0&\dfrac{s+2t}{3}
\end{pmatrix}
-\frac{s}{1-s}
\begin{pmatrix}
\sigma_1^2&\dfrac{\sigma_1\sigma_2}{\sqrt3}\\[5pt]
\dfrac{\sigma_1\sigma_2}{\sqrt3}&\dfrac{\sigma_2^2}{3}
\end{pmatrix}
\end{equation}
and
\begin{equation}\label{eq:block24}
M^{(24)}
=
\begin{pmatrix}
\dfrac{2s+t}{3}&0\\[2pt]
0&t
\end{pmatrix}
-\frac{t}{1-t}
\begin{pmatrix}
\dfrac{\sigma_1^2}{3}&\dfrac{\sigma_1\sigma_2}{\sqrt3}\\[5pt]
\dfrac{\sigma_1\sigma_2}{\sqrt3}&\sigma_2^2
\end{pmatrix}.
\end{equation}
In particular, 
\[
(M_X)_{11}=\frac{s(\alpha^2-s)}{1-s},
\qquad
(M_X)_{44}=\frac{t(\beta^2-t)}{1-t}.
\]

To characterize when the two blocks are positive semidefinite, we also need to control their determinants. Factoring these determinants isolates the remaining scalar conditions:
\begin{equation}\label{eq:block-determinants}
\det M^{(13)}=\frac{s}{3(1-s)}\Phi_{13}(s,t),
\qquad
\det M^{(24)}=\frac{t}{3(1-t)}\Phi_{24}(s,t),
\end{equation}
where
\begin{align}
\Phi_{13}(s,t)
&:=s(\alpha^2+\beta^2-1-s)+2t(\alpha^2-s),
\label{eq:Phi13}\\
\Phi_{24}(s,t)
&:=t(\alpha^2+\beta^2-1-t)+2s(\beta^2-t).
\label{eq:Phi24}
\end{align}
Thus the functions $\Phi_{13}$ and $\Phi_{24}$ encode the determinant
conditions for the two blocks. We now use these formulas to characterize
positivity of $M_X$.

\medskip
\noindent\textbf{Claim.}
For $s,t<1$, one has $M_X\geq0$ if and only if
\begin{equation}\label{eq:box}
0\leq s\leq\alpha^2,
\qquad
0\leq t\leq\beta^2,
\end{equation}
and
\begin{equation}\label{eq:Phi-positive}
\Phi_{13}(s,t)\geq0,
\qquad
\Phi_{24}(s,t)\geq0.
\end{equation}

\noindent\emph{Proof of the claim.}
Suppose first that $M_X\geq0$. Since $(M_X)_{11},(M_X)_{44}\geq0$ and $s,t<1$, the formulas above give \eqref{eq:box}. Also $\det M^{(13)},\det M^{(24)}\geq0$. If $s>0$, then \eqref{eq:block-determinants} gives $\Phi_{13}(s,t)\geq0$; if $s=0$, then $\Phi_{13}(0,t)=2t\alpha^2\geq0$ by \eqref{eq:box}. Similarly, if $t>0$, then $\Phi_{24}(s,t)\geq0$, while if $t=0$, then $\Phi_{24}(s,0)=2s\beta^2\geq0$. Thus \eqref{eq:Phi-positive} holds.

Conversely, assume \eqref{eq:box} and \eqref{eq:Phi-positive}. We first show that $M^{(13)}\geq0$. If $0<s<\alpha^2$, then $(M_X)_{11}>0$, while \eqref{eq:block-determinants} gives $\det M^{(13)}\geq0$; hence $M^{(13)}\geq0$. If $s=0$, then
\[
M^{(13)}=\diag\left(0,\frac{2t}{3}\right)\geq0.
\]
Moreover, suppose $s=\alpha^2>0$. Then $(M_X)_{11}=0$, and since $\det M^{(13)}\geq0$, the off-diagonal entry of $M^{(13)}$ must vanish. Note that
\[
(M^{(13)})_{12}=-\dfrac{s\sigma_1\sigma_2}{\sqrt3(1-s)}
\]
and $0<s=\alpha^2<1$, we have $\sigma_1>0$, so $\sigma_2=0$. The remaining diagonal entry is then $(s+2t)/3\geq0$, and therefore $M^{(13)}\geq0$.

The proof that $M^{(24)}\geq0$ is analogous. If $0<t<\beta^2$, then $(M_X)_{44}>0$ and $\det M^{(24)}\geq0$, so $M^{(24)}\geq0$. If $t=0$, then
\[
M^{(24)}=\diag\left(\frac{2s}{3},0\right)\geq0.
\]
If $t=\beta^2>0$, then $(M_X)_{44}=0$, and $\det M^{(24)}\geq0$ forces its off-diagonal entry to vanish. Since
\[
(M^{(24)})_{12}=-\frac{t\sigma_1\sigma_2}{\sqrt3(1-t)}
\]
and $0<t=\beta^2<1$, we have $\sigma_2>0$, so $\sigma_1=0$. The remaining diagonal entry is $(2s+t)/3\geq0$. Thus $M^{(24)}\geq0$, and hence $M_X\geq0$. This proves the claim. \qed

\medskip

Before proving the equivalence of the three conditions, note that because $X=I_2$ exactly when $(s,t)=(0,0)$, Lemma~\ref{lem:graph-point} shows that $M_X \neq 0$ is equivalent to $(s,t) \neq (0,0)$. Thus \textup{(1)} is equivalent to the existence of
$(s,t)\neq(0,0)$ satisfying \eqref{eq:box} and
\eqref{eq:Phi-positive}.

\medskip
First of all, \textup{(3)} clearly implies \textup{(1)}. We now show that \textup{(1)} implies \textup{(2)}.
Assume \textup{(1)} and suppose, toward a contradiction, that $\alpha+\beta\leq1$. Then
$
1-\alpha^2-\beta^2\geq2\alpha\beta.
$
By the claim, there is $(s,t)\neq(0,0)$ satisfying \eqref{eq:box} and \eqref{eq:Phi-positive}. If $s,t>0$, then $\Phi_{13}(s,t),\Phi_{24}(s,t)\geq0$ give
\[
2t(\alpha^2-s)\geq s(1+s-\alpha^2-\beta^2)
\quad\text{and}\quad
2s(\beta^2-t)\geq t(1+t-\alpha^2-\beta^2).
\]
Multiplying and cancelling $st>0$, we obtain
\[
4(\alpha^2-s)(\beta^2-t)
\geq
(1+s-\alpha^2-\beta^2)(1+t-\alpha^2-\beta^2).
\]
The left-hand side is at most $4\alpha^2\beta^2$, whereas
\[
(1+s-\alpha^2-\beta^2)(1+t-\alpha^2-\beta^2)
\geq
(2\alpha\beta+s)(2\alpha\beta+t)>4\alpha^2\beta^2,
\]
a contradiction.

If $s>0$ and $t=0$, then $\Phi_{13}(s,0)\geq0$ gives $\alpha^2+\beta^2-1-s\geq0$, and hence $\alpha^2+\beta^2>1$, so $\alpha+\beta>1$, again a contradiction. The case $s=0<t$ is symmetric. Since $(s,t)\neq(0,0)$, these cases exhaust all possibilities. Therefore $\alpha+\beta>1$, proving \textup{(2)}.

\medskip
We next show that \textup{(2)} implies \textup{(3)}.
Assume $\alpha+\beta>1$ and set $d=\alpha+\beta-1>0$. Since $0<\alpha,\beta\leq1$, we have $d\leq\alpha,\beta$. Fix $0<\kappa<1$ and define
$s=\kappa\alpha d$ and
$t=\kappa\beta d$.
Then $0<s<\alpha^2$ and $0<t<\beta^2$, so the relevant pivots of both blocks are strictly positive. Moreover,
\[
\begin{aligned}
\Phi_{13}(s,t)
&=\kappa\alpha d^2
\bigl[\alpha+\beta+1-\kappa(\alpha+2\beta)\bigr],\\
\Phi_{24}(s,t)
&=\kappa\beta d^2
\bigl[\alpha+\beta+1-\kappa(2\alpha+\beta)\bigr].
\end{aligned}
\]
The bracketed factors are strictly positive because
\[
\alpha+\beta+1-\kappa(\alpha+2\beta)
=1-\beta+(1-\kappa)(\alpha+2\beta)>0
\]
and
\[
\alpha+\beta+1-\kappa(2\alpha+\beta)
=1-\alpha+(1-\kappa)(2\alpha+\beta)>0.
\]
Thus $\Phi_{13}(s,t),\Phi_{24}(s,t)>0$, so \eqref{eq:block-determinants} gives $\det M^{(13)},\det M^{(24)}>0$. Both blocks are therefore positive definite, and hence $M_X>0$. This proves \textup{(3)}.
\end{proof}

\begin{remark}\label{rem:residual-rank-one-impossible}
The same block form shows that a positive semidefinite $M_X$ has rank
$0$, $2$, $3$, or $4$, but never rank one. Indeed, if $s>0$, then
$M^{(13)}$ is a positive definite matrix minus a rank-one matrix, so it
cannot have rank zero; if $s=0$, then
$M^{(13)}=\diag(0,2t/3)$, which vanishes only when $t=0$. The same
argument applies to $M^{(24)}$. Thus away from $(s,t)=(0,0)$ both blocks
have rank one or two. This explains why the later realization lemma (Lemma \ref{lem:target-realization}) with $Q=0$ 
does not produce a target-three map; the separate exclusion of that target
still requires the classification argument below.
\end{remark}

\begin{remark}\label{rem:strict-feasibility}
The strict feasibility in Theorem~\ref{thm:sharp-feasibility} is
existential: for every admissible denominator, there is a choice of $X$
with $M_X>0$. It does not strengthen the forward implication of the Gram
criterion for an arbitrary fixed numerator. That argument produces
\[
M_X=B_0^*B_0+K^*K\geq0,
\]
and reducedness guarantees only that $M_X\neq0$.
\end{remark}

We can now combine the Gram criterion with the sharp range and strict feasibility result to prove Theorem~\ref{thm:main}. Recall that the theorem characterizes exactly which parameter pairs $(\sigma_1, \sigma_2)$ occur as denominators of reduced degree-three rational sphere maps: namely, 
\[
0 \leq \sigma_1, \sigma_2 < 1, \quad \sqrt{1-\sigma_1^2}+\sqrt{1-\sigma_2^2}>1.
\]
The necessity of the first inequalities follows from the zero-free property of a reduced denominator, while Theorems~\ref{thm:gram} and \ref{thm:sharp-feasibility} give the remaining necessity and the converse.
\begin{proof}[Proof of Theorem~\ref{thm:main}]
We first prove necessity. Suppose that a reduced sphere map $P/g$ exists.
If $\sigma_1\geq1$, then the point $(z_1,z_2)=(i/\sqrt{\sigma_1},0)$ lies in the closed unit ball and satisfies $g(z)=0$, contradicting Lemma~\ref{lem:zero-free-denominator}. The same argument applies to $\sigma_2$. Thus $0\leq\sigma _1,\sigma _2<1$.
If $(\sigma _1,\sigma _2)\neq(0,0)$, Theorem~\ref{thm:gram} gives a
positive diagonal $X$ with $M_X\geq0$ and $M_X\neq0$.
Theorem~\ref{thm:sharp-feasibility} then implies
$
\sqrt{1-\sigma _1^2}+\sqrt{1-\sigma _2^2}>1.
$
At $(\sigma _1,\sigma _2)=(0,0)$, this inequality is automatic.

Conversely, assume \eqref{eq:admissible-range}. At
$(\sigma _1,\sigma _2)=(0,0)$, one has $g=1$, and the polynomial map
$
P=V_3
$
satisfies $\|P\|^2=r^3=1$ on $\partial\B^2$. It has degree three and is automatically reduced.
For $(\sigma _1,\sigma _2)\neq(0,0)$, the strict-feasibility part of
Theorem~\ref{thm:sharp-feasibility} supplies a diagonal $X>0$ with
$M_X>0$. The converse direction of Theorem~\ref{thm:gram} then produces
the required degree-three reduced sphere map $P/g$.
\end{proof}

\section{Target dimensions and full-rank maps}\label{sec:full-rank}

Having determined exactly which denominator parameters admit degree-three
sphere maps, we now ask which target dimensions can occur for a fixed
admissible denominator.  The Gram formulation from the previous sections
is particularly useful for this question: the rank of an appropriate residual Gram matrix determines how many cubic components are required, while additional
quadratic components can be introduced by subtracting their contribution
from that residual matrix.

We proceed in three steps.  First, we formulate a realization lemma that
constructs sphere maps from this decomposition and keeps track of their
minimal target dimension.  We then use strict feasibility to construct a
full-rank map, realizing the largest possible target dimension. Finally, we vary the rank of the residual matrix and the number of quadratic components to construct the remaining attainable target dimensions, and then rule out the smaller possibilities.  The lowest target dimensions require separate consideration; in particular, the exclusion of target dimension three for nonconstant denominators use Faran's classification of proper maps from $\B^2$ to $\B^3$ \cite{Faran}.

Recall that
\begin{equation}
\mathcal W
=
\{f\in\C[z_1,z_2]: f(0)=0,\ \deg f\leq3\}. \label{e:4.1}
\end{equation}
Thus $\dim_{\C}\mathcal W=9$. Let $F=P/g$ be a reduced normalized
rational map, where $P=(p_1,\ldots,p_N)^t$, $P(0)=0$, and $g(0)=1$.
We say that $F$ has \emph{full rank} when the scalar components of $P$
span $\mathcal W$, equivalently when its minimal target dimension is $9$.

The following realization lemma will be used for all of the target-rank
constructions in this section. 

\begin{lemma}[Realization lemma]\label{lem:target-realization}
Assume $(\sigma_1,\sigma_2)$ is admissible, and let $X>0$ be diagonal.
Let $Q$ be a column of $k$ linearly independent
homogeneous quadratic polynomials; we
allow $k=0$ and $Q=0$.  Write
$V_1\otimes Q=KV_3$ and suppose that $Y=M_X-K^*K\geq0$ and $Y\neq0$.
Put $m=\rank Y$ and choose a full-row-rank matrix
$L\in\C^{m\times4}$ such that $L^*L=Y$. Then
\begin{equation}\label{eq:master-target-numerator}
P_{X,Q}
=
\left(
X^{1/2}V_1+X^{-1/2}TV_3,\,
Q,\,
LV_3
\right)
\end{equation}
defines a reduced degree-three sphere map $P_{X,Q}/g$ whose minimal
target is $\B^{2+k+m}$.
\end{lemma}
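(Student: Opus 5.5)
The proof has four parts: the sphere identity, the degree, reducedness, and the minimal target dimension. The first three follow the converse half of the Gram criterion (Theorem~\ref{thm:gram}). The only new ingredient is the quadratic block $Q$, whose norm is absorbed into $M_X$ through $K$.

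\emph{Sphere identity.} On $r=1$, expand $\|P_{X,Q}\|^2$ into three contributions.
\begin{itemize}
\item The first block gives $V_3^*(S_X+T^*X^{-1}T)V_3+q+\bar q$, using \eqref{eq:S-def} and \eqref{eq:qT}, exactly as in the proof of Theorem~\ref{thm:gram}.
\item The block $Q$ contributes $\|Q\|^2=r\|Q\|^2=\|V_1\otimes Q\|^2=V_3^*K^*KV_3$ on the sphere.
\item The block $LV_3$ contributes $V_3^*YV_3$.
\end{itemize}
Since $K^*K+Y=M_X$, the total is $V_3^*(S_X+T^*X^{-1}T+M_X)V_3+q+\bar q=V_3^*(I_4+T^*T)V_3+q+\bar q=1+|q|^2+q+\bar q=|g|^2$, by \eqref{eq:MX} and \eqref{eq:T-norm}. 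Admissibility gives $\sigma_j<1$, so $g$ is zero-free on the ball, and $P_{X,Q}/g$ is a sphere map.

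\emph{Degree and reducedness.} Since $Y\neq0$, we have $L\neq0$. The components of $LV_3$ are therefore nonzero homogeneous cubics, so $\deg P_{X,Q}=3$. If a nonconstant factor $h$ of $g$ divided every component, it would divide a nonzero homogeneous cubic. This is impossible by Lemma~\ref{lem:homogeneous-divisor}, since $h(0)=1\neq0$.

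\emph{Minimal target dimension.} We must show the $2+k+m$ scalar components are linearly independent. I expect this step to need the most care, because the first block mixes degrees one and three.
\begin{itemize}
\item Suppose a linear relation $c_1^t(X^{1/2}V_1+X^{-1/2}TV_3)+c_2^tQ+c_3^tLV_3=0$ holds. Separate it by homogeneous degree.
\item The linear part is $c_1^tX^{1/2}V_1=0$. The entries of $V_1$ are independent and $X^{1/2}$ is invertible, so $c_1=0$.
\item The quadratic part is $c_2^tQ=0$, so $c_2=0$ because the entries of $Q$ are independent.
\item The cubic part is then $c_3^tLV_3=0$. The entries of $V_3$ are independent, so $c_3^tL=0$, and $L$ having full row rank $m$ forces $c_3=0$.
\end{itemize}
Hence $\dim\Span$ of the components equals $2+k+m$. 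By definition this is the minimal target dimension, so the minimal target is $\B^{2+k+m}$.
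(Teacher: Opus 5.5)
Your proposal is correct and follows the paper's proof essentially step for step. It uses the same orthogonal three-block expansion of $\|P_{X,Q}\|^2$ on $r=1$ with $K^*K+Y=M_X$ absorbing the quadratic block, the same appeal to Lemma~\ref{lem:homogeneous-divisor} through a nonzero cubic component of $LV_3$, and the same degree-by-degree separation for linear independence; the only nit is that a factor $h$ of $g$ satisfies $h(0)\neq0$ rather than necessarily $h(0)=1$, which is all that lemma requires.
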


\begin{proof}
The three entries in \eqref{eq:master-target-numerator} take values in
mutually orthogonal target summands.  On $r=1$, the identities
\eqref{eq:qT}, \eqref{eq:S-def}, and
$
r\|Q\|^2=\|V_1\otimes Q\|^2=V_3^*K^*KV_3
$
give
\begin{align*}
\|P_{X,Q}\|^2
&=V_3^*
\left(S_X+T^*X^{-1}T+K^*K+Y\right)V_3+q+\bar q\\
&=V_3^*(I_4+T^*T)V_3+q+\bar q\\
&=|1+q|^2.
\end{align*}
Moreover, for $z\in\overline{\B^2}$,
\[
|q(z)|
\leq \max(\sigma_1,\sigma_2)\|z\|^2
\leq \max(\sigma_1,\sigma_2)<1.
\]
Thus $g$ has no zero on the closed ball, and $P_{X,Q}/g$ is a sphere
map.

Since $Y\neq0$, at least one component of $LV_3$ is a nonzero
homogeneous cubic.  Hence $P_{X,Q}$ has degree three.  If a nonconstant
factor $h$ of a nonconstant $g$ divided every component of $P_{X,Q}$,
projection onto the last target summand would show that $h$ divides every
component of $LV_3$. This contradicts
Lemma~\ref{lem:homogeneous-divisor}, because $h(0)\neq0$ and at least one
of those components is a nonzero homogeneous cubic. Thus the map is
reduced; when $g=1$, reducedness is automatic.

Finally, consider a linear relation among the scalar components of
$P_{X,Q}$.  Its degree-one part is a relation among the two components of
$X^{1/2}V_1$, so the first two coefficients vanish.  Its degree-two part
is then a relation among the components of $Q$, so the next $k$
coefficients vanish.  What remains is a relation among the components of
$LV_3$; since $L$ has full row rank, its $m$ scalar components are
linearly independent.  Thus all $2+k+m$ components are linearly
independent, proving the assertion about the minimal target.
\end{proof}

Thus the realization lemma reduces the target-dimension problem to controlling the number of quadratic components and the rank of the residual matrix:
\[
N=2+k+\rank(M_X-K^*K).
\]
We will use these two quantities independently. Setting $Q=0$ allows us to vary the target dimension through the rank of $M_X$, while choosing nonzero $Q$ allows us to add quadratic components without changing the rank of the remaining cubic contribution.

We first realize the largest possible target.  Because every scalar component
of a normalized degree-three numerator lies in the nine-dimensional space
$\mathcal W$, no minimal target dimension larger than nine can occur.
We now show that this upper bound is attained for every admissible
denominator.

\begin{corollary}\label{cor:full-rank}
Let
$g=1+\sigma _1z_1^2+\sigma _2z_2^2$.
Assume
\[
0\leq\sigma _1,\sigma _2<1,
\qquad
\sqrt{1-\sigma _1^2}+\sqrt{1-\sigma _2^2}>1.
\]
Then there is a reduced normalized sphere map $P/g$ of degree three and
full rank with target $\B^9$.
\end{corollary}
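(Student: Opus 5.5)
We plan to apply the Realization lemma (Lemma~\ref{lem:target-realization}) with $k=3$ quadratic components and a residual of rank $4$, which gives minimal target $2+3+4=9$. At the polynomial point $(0,0)$ no residual $M_X$ is needed: any $P$ whose components form a basis of $\mathcal W$ and satisfy the sphere identity will do. One option is the tensoring construction. Alternatively, the same scheme works with $T=0$ and $X=(1-s)I_2$, where $M_X=\diag\bigl(s,s,s,s\bigr)>0$ for $0<s<1$. So we treat both cases uniformly, with the paper's formulas still valid when $T=0$. In the case $(\sigma_1,\sigma_2)=(0,0)$ the Realization lemma's degree and reducedness arguments go through unchanged, since $g=1$.

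The first step is to invoke strict feasibility. Theorem~\ref{thm:sharp-feasibility}, part (3), supplies a diagonal $X>0$ with $M_X>0$. At $(0,0)$ we use the explicit choice above. Let $\lambda>0$ be the smallest eigenvalue of $M_X$.

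The second step is to choose the quadratic column $Q=\varepsilon V_2$ with $\varepsilon>0$. Its three components $\varepsilon z_1^2,\ \varepsilon\sqrt2\,z_1z_2,\ \varepsilon z_2^2$ are linearly independent homogeneous quadratics. Writing $V_1\otimes V_2=K_0V_3$ for a fixed matrix $K_0$, we have $K=\varepsilon K_0$. Moreover $V_3^*K_0^*K_0V_3=r\|V_2\|^2=r^3=V_3^*V_3$, so by comparing bihomogeneous coefficients $K_0^*K_0=I_4$. Hence
\[
Y=M_X-K^*K=M_X-\varepsilon^2 I_4 ,
\]
which is positive definite as soon as $\varepsilon^2<\lambda$. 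Then $m=\rank Y=4$, $Y\neq0$, and $L=Y^{1/2}$ has full row rank.

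The third step is to apply Lemma~\ref{lem:target-realization}. It yields a reduced degree-three sphere map $P_{X,Q}/g$ with minimal target $\B^{2+3+4}=\B^9$. Since its components are nine linearly independent elements of the nine-dimensional space $\mathcal W$, they span $\mathcal W$, so the map has full rank.

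The only step of substance is the first, and that is exactly the strict-feasibility statement already proved. The one small check beyond it is the identity $K_0^*K_0=I_4$. If it proved awkward, it would suffice to note that $K_0^*K_0$ is some fixed Hermitian matrix, so $M_X-\varepsilon^2K_0^*K_0>0$ for small $\varepsilon$ by continuity; full rank of $Y$ is all that is needed.
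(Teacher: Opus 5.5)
Your proposal is correct and follows the paper's proof: you get strict feasibility from Theorem~\ref{thm:sharp-feasibility}, take $Q=\epsilon V_2$ with $K^*K=\epsilon^2 I_4$, and apply Lemma~\ref{lem:target-realization} with residual $M_X-\epsilon^2 I_4>0$ to reach minimal target $2+3+4=9$. At the polynomial point the paper writes down $(\sqrt a\,V_1,\sqrt b\,V_2,\sqrt c\,V_3)$ directly, and your choice $X=(1-s)I_2$, $M_X=sI_4$ produces exactly this map with $a=1-s$, $b=\epsilon^2$, $c=s-\epsilon^2$.
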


\begin{proof}
First suppose that $(\sigma _1,\sigma _2)\neq(0,0)$. By
Theorem~\ref{thm:sharp-feasibility}, there is a positive diagonal
matrix $X$ such that $M_X>0$. Choose
\[
0<\epsilon^2<\lambda_{\min}(M_X)
\]
and define
\begin{equation}\label{eq:full-rank-numerator}
P_\epsilon=
\left(
X^{1/2}V_1+X^{-1/2}TV_3,\,
\epsilon V_2,\,
(M_X-\epsilon^2I_4)^{1/2}V_3
\right).
\end{equation}
The nine scalar components of $P_\epsilon$ are recorded explicitly in
Subsection~\ref{sec:scalar-numerator}.
For $Q=\epsilon V_2$, the matrix $K$ in
$V_1\otimes Q=KV_3$ satisfies $K^*K=\epsilon^2I_4$: indeed,
$\|V_1\otimes Q\|^2=\epsilon^2r^3
=V_3^*(\epsilon^2I_4)V_3$, and comparison of bihomogeneous
coefficients gives the matrix identity.  Thus the residual matrix in
Lemma~\ref{lem:target-realization} is
$M_X-\epsilon^2I_4>0$.  The three components of $Q$ are linearly
independent, so the lemma gives a reduced degree-three map with minimal
target dimension $2+3+4=9$.  It is therefore full rank.

At $(\sigma _1,\sigma _2)=(0,0)$, choose $a,b,c>0$ with
$a+b+c=1$, and set
\[
P=(\sqrt a\,V_1,\sqrt b\,V_2,\sqrt c\,V_3).
\]
Then
\[
\|P\|^2=ar+br^2+cr^3=1
\qquad\text{on }r=1.
\]
Its scalar components form a basis of $\mathcal W$, so this polynomial
sphere map also has full rank.
\end{proof}

\subsection{Scalar form of the full-rank numerator}
\label{sec:scalar-numerator}
Having exhibited the full-rank construction explicitly, we now return to the target-dimension problem.
We expand the compact construction \eqref{eq:full-rank-numerator}. Let
\[
X=\diag(x,y)>0,
\qquad
0<\epsilon^2<\lambda_{\min}(M_X),
\]
be the choices made in the proof of
Corollary~\ref{cor:full-rank}, and write
\[
L=(M_X-\epsilon^2I_4)^{1/2}=(\ell_{jk})_{j,k=1}^4.
\]
The matrix $L$ is positive definite. Since $X$ is diagonal, the
matrices $M_X$ and $L$ preserve the coordinate subspaces indexed by
$(1,3)$ and $(2,4)$. Thus
\[
L=
\begin{pmatrix}
\ell_{11}&0&\ell_{13}&0\\
0&\ell_{22}&0&\ell_{24}\\
\ell_{13}&0&\ell_{33}&0\\
0&\ell_{24}&0&\ell_{44}
\end{pmatrix}.
\]
Because $TV_3=qV_1$, the first two components are
\[
\begin{aligned}
p_1 &=
\sqrt{x}\,z_1
+\frac{\sigma_1}{\sqrt{x}}z_1^3
+\frac{\sigma_2}{\sqrt{x}}z_1z_2^2,\\
p_2 &=
\sqrt{y}\,z_2
+\frac{\sigma_1}{\sqrt{y}}z_1^2z_2
+\frac{\sigma_2}{\sqrt{y}}z_2^3.
\end{aligned}
\]
The quadratic components are
\[
p_3=\epsilon z_1^2,\qquad
p_4=\epsilon\sqrt2\,z_1z_2,\qquad
p_5=\epsilon z_2^2,
\]
and the remaining cubic components are
\[
\begin{aligned}
p_6 &= \ell_{11}z_1^3+\sqrt3\,\ell_{13}z_1z_2^2,
&
p_7 &= \sqrt3\,\ell_{22}z_1^2z_2+\ell_{24}z_2^3,\\
p_8 &= \ell_{13}z_1^3+\sqrt3\,\ell_{33}z_1z_2^2,
&
p_9 &= \sqrt3\,\ell_{24}z_1^2z_2+\ell_{44}z_2^3.
\end{aligned}
\]
The entries of $L$ are explicit. Indeed, the two nonzero blocks of $L$ are the
positive square roots of
\[
Y^{(13)}=M^{(13)}-\epsilon^2I_2,
\qquad
Y^{(24)}=M^{(24)}-\epsilon^2I_2,
\]
where $M^{(13)}$ and $M^{(24)}$ are given in
\eqref{eq:block13}--\eqref{eq:block24}. For any positive definite
$2\times2$ matrix $Y$,
\[
Y^{1/2}
=
\frac{Y+\sqrt{\det Y}\,I_2}
{\sqrt{\operatorname{tr}Y+2\sqrt{\det Y}}}.
\]
The resulting map is
\[
F=\frac{(p_1,\ldots,p_9)^t}
{1+\sigma_1z_1^2+\sigma_2z_2^2}.
\]

If $(\sigma_1,\sigma_2)=(0,0)$, choose
$a,b,c>0$ with $a+b+c=1$. The corresponding scalar components are
\begin{align*}
p_1&=\sqrt a\,z_1,
&p_2&=\sqrt a\,z_2,\\
p_3&=\sqrt b\,z_1^2,
&p_4&=\sqrt{2b}\,z_1z_2,
&p_5&=\sqrt b\,z_2^2,\\
p_6&=\sqrt c\,z_1^3,
&p_7&=\sqrt{3c}\,z_1^2z_2,
&p_8&=\sqrt{3c}\,z_1z_2^2,
&p_9&=\sqrt c\,z_2^3.
\end{align*}

\subsection{Allowable target dimensions}
\label{sec:attainable-targets}

We now determine all possible minimal target dimensions. From Lemma~\ref{lem:target-realization}, the realization formula
\[
N=2+k+\rank Y
\]
gives two ways to vary $N$: change the rank of the residual cubic Gram matrix, or introduce additional independent quadratic components through $Q$.

To realize the smaller targets, we need finer control over the rank of $M_X$, not only its positivity. Because $M_X$ splits into two $2 \times 2$ blocks, each consisting of a positive diagonal matrix minus a rank-one contribution, the following result allows us to determine precisely when a block is positive definite and when its rank drops.
\begin{lemma}[Rank-one perturbation]\label{lem:rank-one-perturbation}
Let $\Delta>0$ be a Hermitian $m\times m$ matrix, let $c>0$, and let
$v\neq0$. Then
\[
\Delta-cvv^*\geq0
\quad\Longleftrightarrow\quad
c\,v^*\Delta^{-1}v\leq1.
\]
The matrix on the left is positive definite when the inequality is strict
and has rank $m-1$ when equality holds.
\end{lemma}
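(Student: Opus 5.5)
The plan is to reduce to the identity matrix by congruence. Since $\Delta>0$, it has a positive definite square root $\Delta^{1/2}$, and $\Delta^{-1/2}$ is invertible. Set
\[
w:=\Delta^{-1/2}v\neq0 .
\]
Then
\[
\Delta-cvv^*=\Delta^{1/2}\bigl(I_m-c\,ww^*\bigr)\Delta^{1/2}.
\]
By Sylvester's law of inertia, or simply because congruence by an invertible matrix preserves positive semidefiniteness, positive definiteness and rank, it suffices to study $I_m-c\,ww^*$. We also note that
\[
c\,w^*w=c\,v^*\Delta^{-1}v .
\]

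Next we diagonalize $I_m-c\,ww^*$ explicitly. On the orthogonal complement $w^\perp$, which has dimension $m-1$, it acts as the identity. On the line spanned by $w$ it acts by the scalar $1-c\|w\|^2$. So its eigenvalues are $1$ with multiplicity $m-1$, and $1-c\,v^*\Delta^{-1}v$.

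All three assertions then follow by reading off signs. The matrix is positive semidefinite exactly when $1-c\,v^*\Delta^{-1}v\geq0$. It is positive definite exactly when this inequality is strict. When equality holds, exactly one eigenvalue vanishes, so the rank is $m-1$.

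There is no real obstacle. The only point to check is that congruence by $\Delta^{1/2}$ preserves rank and the two positivity notions. This holds because $\Delta^{1/2}$ is invertible and $x^*\Delta^{1/2}H\Delta^{1/2}x=(\Delta^{1/2}x)^*H(\Delta^{1/2}x)$.
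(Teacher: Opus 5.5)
Your proposal is correct and follows the paper's proof essentially verbatim: both set $w=\Delta^{-1/2}v$, write $\Delta-cvv^*=\Delta^{1/2}(I_m-cww^*)\Delta^{1/2}$, and read off the eigenvalue $1$ on $w^\perp$ and $1-c\|w\|^2=1-c\,v^*\Delta^{-1}v$ on the span of $w$. Your explicit check that congruence by the invertible $\Delta^{1/2}$ preserves semidefiniteness, definiteness, and rank makes precise a step the paper leaves implicit.
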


\begin{proof}
Put $w=\Delta^{-1/2}v$. Then
\[
\Delta-cvv^*
=
\Delta^{1/2}(I_m-cww^*)\Delta^{1/2}.
\]
The matrix $I_m-cww^*$ is the identity on $w^\perp$ and has eigenvalue
$1-c\|w\|^2$ on the span of $w$. Since
$\|w\|^2=v^*\Delta^{-1}v$, the result follows.
\end{proof}

Recall from Lemma~\ref{lem:target-realization}, when $Q=0$, the target dimension is simply
\[
N=2+\rank M_X.
\]
Remark~\ref{rem:residual-rank-one-impossible} shows that a nonzero positive
semidefinite $M_X$ can have rank $2$, $3$, or $4$, but never rank one.
The next lemma shows that all three of these allowable ranks actually
occur for every admissible nonzero denominator.

\begin{lemma}\label{lem:residual-ranks}
For every admissible nonzero pair $(\sigma_1,\sigma_2)$ and every
$m\in\{2,3,4\}$, there is a diagonal matrix $X>0$ such that $M_X\geq0$ and  $\rank M_X=m$.
\end{lemma}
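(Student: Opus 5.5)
We work throughout in the parametrization $X=\diag(1-s,1-t)$ from the proof of Theorem~\ref{thm:sharp-feasibility}. There $M_X=M^{(13)}\oplus M^{(24)}$, so $\rank M_X=\rank M^{(13)}+\rank M^{(24)}$.

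The rank of each block is read off from $s,t$ using the block analysis in the claim and in Remark~\ref{rem:residual-rank-one-impossible}. If $s=0$, then $M^{(13)}=\diag(0,2t/3)$, which has rank one when $t>0$. If $s>0$, then $M^{(13)}$ is the positive diagonal matrix $\diag(s,(s+2t)/3)$ minus a rank-one matrix. By Lemma~\ref{lem:rank-one-perturbation}, this block is then positive definite when $\Phi_{13}(s,t)>0$ and has rank exactly one when $\Phi_{13}(s,t)=0$, since by \eqref{eq:block-determinants} the sign of $\det M^{(13)}$ is the sign of $\Phi_{13}$. The same holds for $M^{(24)}$ with $t$ and $\Phi_{24}$. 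The task therefore reduces to finding points $(s,t)$ in the box \eqref{eq:box} that realize each of the three rank patterns.

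\textbf{Rank $4$.} This is exactly strict feasibility, Theorem~\ref{thm:sharp-feasibility}(3).

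\textbf{Rank $2$.} Take $t=0$ and choose $s$ with $0<s<\alpha^2$ and $\Phi_{13}(s,0)=s(\alpha^2+\beta^2-1-s)>0$. Then $M^{(24)}=\diag(2s/3,0)$ has rank one. For $M^{(13)}$ to have rank one we need $\Phi_{13}(s,0)=0$, i.e.\ $s=\alpha^2+\beta^2-1$. This requires $\alpha^2+\beta^2>1$, which can fail for admissible pairs where $\alpha+\beta>1$ but $\alpha^2+\beta^2\le 1$.

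A more robust construction is to put both blocks on their determinant curves simultaneously: solve $\Phi_{13}(s,t)=\Phi_{24}(s,t)=0$ with $s,t>0$. Along a ray $s=\lambda\alpha u$, $t=\lambda\beta u$, with $u>0$, the computation in the proof of Theorem~\ref{thm:sharp-feasibility} suggests that there is a point where both $\Phi$'s vanish; if the ray does not work, I would vary the ray direction. Concretely, I would parametrize $s=\alpha\mu$, $t=\beta\nu$ and use the two quadratic equations to solve for $(\mu,\nu)$. The contradiction argument in (1)$\Rightarrow$(2) shows that the product inequality becomes an equality exactly at such points. By continuity, there should be a positive solution whenever $\alpha+\beta>1$, obtained by scaling $(s,t)$ outward from the strictly feasible point until the first $\Phi$ vanishes and then adjusting.

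The cleaner route I would actually try first is a one-parameter scaling. Fix a strictly feasible direction $(s_0,t_0)$ and move along $(\lambda s_0,\lambda t_0)$. Each $\Phi_{ij}(\lambda s_0,\lambda t_0)$ has the form $\lambda(a_{ij}-b_{ij}\lambda)$ with $b_{ij}>0$, so it is positive for small $\lambda$ and has exactly one positive zero. Choosing the direction so that both zeros coincide gives rank $2$. Both zeros depend continuously on the direction. The equalization of the two zeros, together with checking that the common point still lies inside the box \eqref{eq:box}, is the step I expect to be the main obstacle. I would handle it by an intermediate value argument as the direction rotates from near the $s$-axis to near the $t$-axis, where the ordering of the two zeros swaps.

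\textbf{Rank $3$.} Move along the same ray toward the first zero only, stopping at $\lambda^*=\min$ of the two zeros. At $\lambda^*$ exactly one block drops to rank one and the other stays positive definite, giving rank $3$. Box membership at $\lambda^*$ follows from the claim, because $M_X\ge0$ there: the set where $M_X\geq0$ is the closure of the strictly feasible set along the segment.

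The axis cases, where $\sigma_1=0$ or $\sigma_2=0$, make some off-diagonal entries vanish. These I would check directly. For example, if $\sigma_2=0$ then $M^{(13)}=\diag\bigl(s(\alpha^2-s)/(1-s),(s+2t)/3\bigr)$, and the ranks can be tuned by taking $s=\alpha^2$ or $t=0$.
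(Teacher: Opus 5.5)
Your ``one-parameter scaling'' route is the paper's proof. With direction $(1,\tau)$, the positive zeros of $\Phi_{13}$ and $\Phi_{24}$ along the ray are
\[
c_1(\tau)=\frac{2\alpha^2\tau-\delta}{1+2\tau},\qquad c_2(\tau)=\frac{2\beta^2-\tau\delta}{\tau(\tau+2)},\qquad \delta=1-\alpha^2-\beta^2,
\]
which are exactly the critical scales the paper obtains from Lemma~\ref{lem:rank-one-perturbation}. Rank $2$ comes from a crossing $c_1(\tau_0)=c_2(\tau_0)$, rank $3$ from $c=c_1(\tau)<c_2(\tau)$, and rank $4$ from strict feasibility.

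However, you only assert the existence of the crossing, which is the real content of the lemma. Your heuristic for it is also wrong when $\delta>0$. In that case directions near either axis are not feasible at all: for $\tau\le\delta/(2\alpha^2)$ one has $\Phi_{13}(c,c\tau)=c\bigl[(2\alpha^2\tau-\delta)-c(1+2\tau)\bigr]<0$ for every $c>0$, so there is no ordering of zeros to compare there. What is needed is the following. The feasible directions form the interval $\delta/(2\alpha^2)<\tau<2\beta^2/\delta$. This interval is nonempty exactly because $\delta<2\alpha\beta$, i.e.\ $\alpha+\beta>1$. At its left end $c_1\to0<c_2$, and at its right end $c_2\to0<c_1$. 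For $\delta\le0$ one uses instead $c_2-c_1\to+\infty$ as $\tau\to0^+$ and $c_1-c_2\to\alpha^2$ as $\tau\to\infty$. This is where admissibility enters, and your sketch never locates it. Without the explicit zeros, the intermediate value step is a gap, not a routine detail.

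The box condition you flag as the main obstacle is in fact automatic. If $s>0$, $t\ge0$ and $s>\alpha^2$, then both terms of $\Phi_{13}$ are negative, since $\alpha^2+\beta^2-1-s<\beta^2-1\le0$. So $\Phi_{13}\ge0$ forces $s\le\alpha^2$, and likewise $\Phi_{24}\ge0$ forces $t\le\beta^2$; these are the paper's bounds $c_1(\tau)\le\alpha^2$ and $\tau c_2(\tau)\le\beta^2$. Moreover, your own block analysis via Lemma~\ref{lem:rank-one-perturbation} already gives $M_X\ge0$ wherever $0<s,t<1$ and $\Phi_{13},\Phi_{24}\ge0$, so the box is never needed.

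What you must check, and do not, is $s,t<1$ at the chosen point, so that $X>0$ and the block analysis applies. This holds because $s=1\le\alpha^2$ forces $\sigma_1=0$, and then $\Phi_{13}(1,t)=-\sigma_2^2<0$; the bound $t<1$ is symmetric. Your appeal to ``the closure of the strictly feasible set along the segment'' is valid only once this is known along the whole segment.

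Two smaller points. For rank $3$ you need a ray on which the two zeros differ, not the rank-$2$ ray; any $\tau$ on either side of the crossing works. The axis cases need no separate treatment: the perturbing vectors $(\sigma_1,\sigma_2/\sqrt3)$ and $(\sigma_1/\sqrt3,\sigma_2)$ are nonzero whenever $(\sigma_1,\sigma_2)\neq(0,0)$, so the rank-one perturbation argument applies uniformly.
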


\begin{proof}
Put
\[
\alpha=\sqrt{1-\sigma_1^2},
\qquad
\beta=\sqrt{1-\sigma_2^2},
\qquad
\delta=1-\alpha^2-\beta^2.
\]
Thus $\alpha,\beta>0$ and $\alpha+\beta>1$. Write
\[
X=\diag(1-s,1-t),
\qquad
s=c,\quad t=c\tau,
\qquad c,\tau>0.
\]
Applying Lemma~\ref{lem:rank-one-perturbation} to
\eqref{eq:block13}--\eqref{eq:block24} shows, provided $c<1$ and
$c\tau<1$, that
\[
M^{(13)}\geq0
\quad\Longleftrightarrow\quad
c\leq c_1(\tau),
\]
and
\[
M^{(24)}\geq0
\quad\Longleftrightarrow\quad
c\leq c_2(\tau),
\]
where
\[
c_1(\tau)
=
\alpha^2-\frac{\sigma_2^2}{1+2\tau}
=
\frac{2\alpha^2\tau-\delta}{1+2\tau}
\]
and
\[
c_2(\tau)
=
\frac{\beta^2}{\tau}-\frac{\sigma_1^2}{\tau+2}
=
\frac{2\beta^2-\tau\delta}{\tau(\tau+2)}.
\]

Thus $c_1(\tau)$ and $c_2(\tau)$ are precisely the critical values of
$c$ at which the $(1,3)$ and $(2,4)$ blocks, respectively, drop from
rank two to rank one.
These inequalities also include the box constraints from
Theorem~\ref{thm:sharp-feasibility}, since
\[
c_1(\tau)\leq\alpha^2,
\qquad
\tau c_2(\tau)\leq\beta^2.
\]
The perturbing vectors in both blocks are nonzero. Thus equality gives a
block of rank one, while strict inequality gives a block of rank two. The
required bounds $c<1$ and $c\tau<1$ will be verified for the choices
below.

If $\delta>0$, both functions are positive on
$
\frac{\delta}{2\alpha^2}
<\tau<
\frac{2\beta^2}{\delta}.
$
This interval is nonempty because $\alpha+\beta>1$. As $\tau$ approaches the left endpoint, $c_1\to0<c_2$, whereas at the
right endpoint $c_2\to0<c_1$. Hence there is an
interior point $\tau_0$ such that
\[
c_1(\tau_0)=c_2(\tau_0)>0.
\]
If $\delta\leq0$, both functions are positive for $\tau>0$,
\[
c_2(\tau)-c_1(\tau)\longrightarrow+\infty
\quad\text{as }\tau\longrightarrow0^+,
\]
and
\[
c_1(\tau)-c_2(\tau)\longrightarrow\alpha^2>0
\quad\text{as }\tau\longrightarrow+\infty.
\]
The same conclusion follows.

Choose $c=c_1(\tau_0)=c_2(\tau_0)$. Both blocks then have rank one, so $\rank M_X=2$. Moreover, $X>0$, since
\[
c=\alpha^2-\frac{\sigma_2^2}{1+2\tau_0}<1,\qquad
c\tau_0=\beta^2-\frac{\tau_0\sigma_1^2}{\tau_0+2}<1,
\]
where the inequalities are strict because $(\sigma_1,\sigma_2)\neq(0,0)$. To obtain rank three, choose $\tau>0$ on the side of the crossing where $0<c_1(\tau)<c_2(\tau)$ and set $c=c_1(\tau)$. The first block has rank one and the second rank two, so $\rank M_X=3$. The same estimates show that $X>0$. For rank four, Theorem~\ref{thm:sharp-feasibility} gives a diagonal $X>0$ with $M_X>0$, hence $\rank M_X=4$.
\end{proof}

We can now prove Theorem~\ref{thm:attainable-targets} and determine the complete set of minimal target dimensions for a reduced pointed degree-three sphere map with
$\Lambda$-normal-form denominator $g_\sigma$. For a nonconstant admissible denominator, the constructions follow directly from the realization formula. Taking $Q=0$ and varying $\rank M_X$ gives target dimensions 4, 5 and 6. Starting from a strictly feasible $M_X>0$, we may add one or two sufficiently small quadratic components while preserving a rank-four residual matrix, giving targets 7 and 8; the full-rank construction gives target 9.

It remains to show that no other targets occur. Dimension 9 is the absolute upper bound because all numerator components lie in $\mathcal{W}$. Targets 1 and 2 are excluded by general rigidity results for proper ball maps. Target 3 is more subtle: the Gram construction alone does not rule it out, so for a nonconstant denominator we appeal to Faran's classification of proper maps from $\B^2$ to $\B^3$. At the polynomial point, target 3 does occur and can be realized explicitly.

\begin{proof}[Proof of Theorem~\ref{thm:attainable-targets}]
Suppose first that $(\sigma_1,\sigma_2)\neq(0,0)$.  Taking $Q=0$ in
Lemma~\ref{lem:target-realization} and using
Lemma~\ref{lem:residual-ranks} gives maps with minimal target dimensions
\[
4,\quad5,\quad6.
\]

Choose a diagonal $X>0$ for which $M_X>0$. For $\ell=1,2$, let $Q_\ell$
be the column consisting of the first $\ell$ entries of $V_2$. Since
$V_1\otimes Q_\ell$ is a vector of homogeneous cubics, there is a matrix
$K_\ell$ such that
\[
V_1\otimes Q_\ell=K_\ell V_3.
\]
Put
$
G_\ell=K_\ell^*K_\ell\geq0.
$
Then
\[
r\|Q_\ell\|^2=V_3^*G_\ell V_3.
\]
For sufficiently small $\epsilon>0$,
\[
Y_\ell=M_X-\epsilon^2G_\ell>0.
\]
Indeed,
\[
V_1\otimes(\epsilon Q_\ell)=(\epsilon K_\ell)V_3,
\qquad
Y_\ell=M_X-(\epsilon K_\ell)^*(\epsilon K_\ell).
\]
Apply Lemma~\ref{lem:target-realization} with $Q=\epsilon Q_\ell$,
$K=\epsilon K_\ell$, and residual matrix $Y_\ell$. Its hypotheses hold
because the components of $Q_\ell$ are distinct monomials and
$\rank Y_\ell=4$. The resulting minimal target dimension is
$2+\ell+4=6+\ell$.  Taking $\ell=1,2$ gives dimensions $7$ and $8$,
while Corollary~\ref{cor:full-rank} gives dimension $9$.
No minimal target dimension larger than nine can occur, since every scalar
component belongs to the nine-dimensional space $\mathcal W$.

There is no proper holomorphic map from $\B^2$ to $\B^1$. Indeed, a
nonconstant proper map would have a regular fiber that is a
positive-dimensional compact complex submanifold of the Stein domain
$\B^2$, which is impossible; a constant map is not proper.  On the other
hand, every proper self-map of $\B^2$ is an automorphism by Alexander's
theorem \cite{Alexander}.  Thus a degree-three map cannot have minimal
target dimension one or two.

Suppose a map with $(\sigma_1,\sigma_2)\neq(0,0)$ had minimal target
dimension three.  Put
$
s=\max(\sigma_1,\sigma_2)<1
$
and choose $\eta>0$ such that $s(1+\eta)^2<1$.  If
$\|z\|\leq1+\eta$, then
\[
|q(z)|\leq s\|z\|^2<1,
\]
so the denominator is zero-free on a neighborhood of
$\overline{\B^2}$. Hence the map is holomorphic past the sphere, and the
sphere identity makes it proper. Faran's classification \cite{Faran}
therefore applies. Among Faran's four equivalence classes, the only one of
algebraic degree three is represented by
\[
(z_1^3,\sqrt3\,z_1z_2,z_2^3).
\]
This representative fixes the origin and has reduced denominator $1$, so
it is already in $\Lambda$-normal form with both denominator parameters
equal to zero.

Let $(\widetilde g,\widetilde P)$ be the degree-three homogenization of $(g,P)$. Since $(g,P)$ is reduced, any common homogeneous factor of $\widetilde g$ and $\widetilde P$ must be a power of $z_0$; this is impossible because some component of $P$ has a nonzero cubic term. Thus the homogenized lift is reduced of degree three. Pre- and postcomposition by automorphisms preserve common factors, and hence preserve the reduced degree. The assumed map would therefore be equivalent to the displayed representative. By the invariance of the denominator parameters \cite{LeblNormalForms}*{Theorem~1.1}, this would force $(\sigma_1,\sigma_2)=(0,0)$, a contradiction. Thus the nonconstant case gives exactly $4,\ldots,9$.

It remains to treat case
$(\sigma_1,\sigma_2)=(0,0)$, where $g=1$ and $T=0$, where $T$ is defined in \eqref{eq:qT}. For target dimension
three, take
\[
\Psi_3(z)=\left(z_1^3,\sqrt3\,z_1z_2,z_2^3\right).
\]
If $x=|z_1|^2$, $y=|z_2|^2$, and $x+y=1$, then
\[
\|\Psi_3(z)\|^2
=x^3+3xy+y^3
=x^3+3xy(x+y)+y^3
=(x+y)^3=1.
\]
Its three components are linearly independent and its degree is three.
For target dimension four, take $\Psi_4=V_3$. Then
$
\|\Psi_4(z)\|^2=r^3=1
$
on the sphere, and its four components are linearly independent.

The remaining dimensions follow from
Lemma~\ref{lem:target-realization}, and we sketch a proof below. First note that, since $T=0,$ we have
$
M_X=I_4-S_X.
$
To achieve target dimension five, we take
$X_5:=\diag(1/2,1)$, then
\[
M_{X_5}=\diag(1/2,1/3,1/6,0).
\]
Taking $Q=0$, Lemma~\ref{lem:target-realization} yields a degree-three map with minimal target
dimension $2+\rank M_{X_5}=5$. Next take $X_6:=I_2/2$, for which
$
M_{X_6}:=I_4/2.
$
Taking $Q=0,$ Lemma~\ref{lem:target-realization} yields minimal target dimension $2+\rank M_{X_6}=6$.

We next prove the case of minimal target dimensions seven, eight, and nine.  For $\ell=1,2$, let $Q_\ell$ be the column consisting of the first
$\ell$ entries of $V_2$, and write
\[
V_1\otimes Q_\ell=K_\ell V_3,
\qquad
G_\ell=K_\ell^*K_\ell.
\]
Note that
\[
V_1\otimes(\epsilon Q_\ell)=(\epsilon K_\ell)V_3,
\qquad
Y_\ell=M_{X_6}-(\epsilon K_\ell)^*(\epsilon K_\ell).
\]
Choose $\epsilon>0$ sufficiently small, one can let
$
Y_\ell=\frac12I_4-\epsilon^2G_\ell>0.
$
Applying Lemma~\ref{lem:target-realization} with
$X=X_6$, $Q=\epsilon Q_\ell$, and $
K=\epsilon K_\ell$
gives the minimal target dimension
\[
2+\ell+\rank Y_\ell=6+\ell.
\]
Thus $\ell=1,2$ gives target dimension seven and eight, while the polynomial-point
construction in Corollary~\ref{cor:full-rank} gives target dimension nine. All these
maps are reduced because $g=1$, and every construction has a nonzero
cubic component. The target-dimension-one and target-dimension-two exclusions above also
apply at the polynomial point, completing the proof.
\end{proof}

We have now completely determined the target dimensions for each admissible denominator: every nonconstant admissible denominator realizes precisely the dimensions $4,\ldots,9$, while the polynomial point also realizes dimension 3. We now pass from the existence of such realizations to the structure of the full space of numerators with a fixed denominator.

\section{Gram normal forms and spectrahedra}\label{sec:spectrahedron}

Having determined the admissible denominators and the possible target dimensions, we now turn from existence to classification. For a fixed admissible denominator $g$, we seek to describe the space of all degree-three rational sphere maps with denominator $g$, first up to unitary transformations of the target and then up to full spherical equivalence.

In this section, we describe all degree-three numerators associated with a fixed admissible denominator. We encode numerators by their Gram matrices, which turns the sphere identity into affine-linear constraints together with positivity. The Gram normal form, Theorem~\ref{thm:gram-normal-form}, identifies this Gram data with target unitary classes and shows that its rank records the minimal target dimension. We then compute the dimension of this resulting Gram spectrahedron in Proposition~\ref{prop:gram-spectrahedron}, and finally quotient by the residual source symmetries to obtain a Gram normal form corresponding to spherical-equivalence classes (Corollary \ref{cor:spherical-normal-form}) and the moduli-space dimensions in Theorem~\ref{thm:moduli-dimension}.

Let $\Herm_9$ denote the real vector space of $9\times9$ Hermitian matrices, and let $\Herm_9^{\ge0}$ be its cone of positive semi-definite matrices. Recall the denominator $g=g_\sigma$ and the admissible parameter set $\Sigma$
from the introduction. In particular, the definition of $\Sigma$ already includes the
ordering $\sigma_1\geq\sigma_2$.
Fix $\sigma\in\Sigma$ and write $g=g_\sigma$. Reuse the normalized monomial
columns from \eqref{eq:monomial-columns} and set
\[
\mathbf V=
\begin{pmatrix}V_1\\V_2\\V_3\end{pmatrix}.
\]
Thus the entries of $\mathbf V$ are a basis of $\mathcal W$ (see \eqref{e:4.1}). Every normalized numerator of degree
at most three can be written as
$
P=C\mathbf V.
$
The squared norm of $P(z)$ is 
\[
\|P(z)\|^2=\mathbf V(z)^*H\mathbf V(z),
\qquad H=C^*C\geq0.
\]

Define  the affine equality space 
\begin{equation}\label{eq:affine-equality-space}
\mathcal A_g
=
\left\{
H\in\Herm_9:
\mathbf V^*H\mathbf V=|g|^2
\text{ on }\partial\B^2
\right\},
\end{equation}
and its positive-semidefinite part
\begin{equation}\label{eq:gram-spectrahedron}
\mathcal S_g
=
\mathcal A_g\cap\Herm_9^{\geq0}.
\end{equation}
We call $\mathcal S_g$ the \emph{Gram spectrahedron} associated with
$g$.
The full-rank, positive-definite locus  of $\mathcal S_g$ will be denoted by
\[
\mathcal S_g^{\mathrm{fr}}
=
\mathcal S_g\cap\Herm_9^{>0}
=
\mathcal A_g\cap\Herm_9^{>0}.
\]

\subsection{The Gram normal form}

We call two numerator representations \emph{stably target-unitarily
equivalent} if, after adjoining zero target components as needed, their
numerators differ by a unitary transformation of the target. For $H\geq0$,
let $H^{1/2}$ denote its unique positive-semidefinite Hermitian square root.

The next theorem makes the role of the Gram matrix $H$ precise. Every feasible Gram matrix produces a canonical numerator via its positive square root, and conversely every numerator determines a unique Gram matrix. Moreover, basic geometric features of the map, including its minimal target dimension, degree, and reducedness, can all be read directly from $H$.

\begin{theorem}[Gram normal form]\label{thm:gram-normal-form}
Assume $\sigma\in\Sigma$. For $H\in\mathcal S_g$, put
\begin{equation}\label{eq:gram-normal-form}
P_H=H^{1/2}\mathbf V,
\qquad
F_H=\frac{P_H}{g}.
\end{equation}
Then the following statements hold.
\begin{enumerate}[label=\textup{(\arabic*)}]
\item $F_H$ is a pointed sphere map into $\B^9$, although the displayed
representation need not be reduced.
\item The assignment $H\mapsto[F_H]$, where brackets denote the stable
target-unitary class, is a bijection from $\mathcal S_g$ to the stable
target-unitary equivalence classes of pointed representations $P/g$, with
$\deg P\leq3$, that satisfy the sphere identity. The smallest target
dimension of the class is $\rank H$.
\item Define
$
\mathcal E_H
=\{v^*\mathbf V:v\in\operatorname{Range}H\}.
$
This is the span of the scalar numerator components for every factorization
$H=C^*C$. The representation is reduced precisely when no nonconstant
factor of $g$ divides every polynomial in $\mathcal E_H$.
\item Let $H_{c}$ be the trailing $4\times4$ principal block
of $H$, indexed by the cubic monomials. The numerator has degree three
precisely when $H_{c}\neq0$.
\item The representation $F_H$ has full rank precisely when
$\rank H=9$, equivalently when $H>0$. In that case reducedness and degree
three are automatic.
\end{enumerate}
\end{theorem}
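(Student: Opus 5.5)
The proof is mostly linear algebra on the factorization $P=C\mathbf V$. We treat the five items in order, relying on the fact that the nine entries of $\mathbf V$ are linearly independent polynomials, so that $v\mapsto v^*\mathbf V$ (or $v\mapsto v^t\mathbf V$) identifies $\C^9$ with $\mathcal W$.

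For (1), we have $\|P_H\|^2=\mathbf V^*H^{1/2}H^{1/2}\mathbf V=\mathbf V^*H\mathbf V=|g|^2$ on $\partial\B^2$ because $H\in\mathcal A_g$. Since $\sigma\in\Sigma$, Theorem~\ref{thm:main} gives $\max\sigma_j<1$, and then $|q|\le\max\sigma_j\|z\|^2$ shows that $g$ is zero-free on $\overline{\B^2}$. Hence $F_H$ is holomorphic across the closed ball and is a sphere map. It is pointed because $\mathbf V(0)=0$, and it lands in $\B^9$ because $H^{1/2}$ is $9\times9$.

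For (2), any pointed numerator with $\deg P\le3$ can be written uniquely as $P=C\mathbf V$, with $C\in\C^{N\times9}$. The sphere identity says exactly that $C^*C\in\mathcal A_g$, and $C^*C\ge0$ automatically, so $H=C^*C\in\mathcal S_g$. This $H$ is unchanged under target unitaries and under adjoining zero rows, so the map to $\mathcal S_g$ is well defined on stable classes. It is injective by the standard fact that $C_1^*C_1=C_2^*C_2$ implies $C_2=UC_1$ for some partial isometry, which extends to a unitary after padding with zeros. Independence of the entries of $\mathbf V$ is what ensures that $P$ determines $C$ in the first place. Surjectivity is item (1), since $H=(H^{1/2})^*H^{1/2}$. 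For the minimal target dimension, the span of the components of $C\mathbf V$ is $\{w^tC\mathbf V\}$. Its dimension equals $\rank C=\rank H$, again by independence of the entries of $\mathbf V$.

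For (3), write $H=C^*C$. The components are $e_i^tC\mathbf V=(C^*\bar e_i)^*\mathbf V$. The vectors $C^*\bar e_i$ span $\operatorname{Range}C^*=\operatorname{Range}H$, so the span of the components is $\mathcal E_H$, independently of the factorization. Reducedness means that no nonconstant factor of $g$ divides all components. Divisibility of all components is equivalent to divisibility of every element of their span, which gives the stated criterion.

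For (4), the cubic part of $P$ is $C_c V_3$, where $C_c$ denotes the last four columns of $C$. It vanishes iff $C_c=0$, iff $H_c=C_c^*C_c=0$. Since $\deg P\le3$, this says $\deg P=3$ iff $H_c\ne0$.

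For (5), full rank holds iff $\mathcal E_H=\mathcal W$, iff $\operatorname{Range}H=\C^9$, iff $H>0$. In that case $H_c>0$, so the degree is three by (4). For reducedness, $\mathcal E_H$ contains the homogeneous cubic $z_1^3$. By Lemma~\ref{lem:homogeneous-divisor}, no factor $h$ of $g$ with $h(0)\ne0$ can divide it. Every nonconstant factor of $g$ has $h(0)\ne0$ since $g(0)=1$.

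The only delicate step is the injectivity in (2), namely the unitary-freedom statement for factorizations of $H$ of different widths. We handle it by padding both $C_1$ and $C_2$ with zero rows to a common size $N$. The map $C_1w\mapsto C_2w$ is then isometric from $\operatorname{Range}C_1$ onto $\operatorname{Range}C_2$, and it can be extended to a unitary of $\C^N$.
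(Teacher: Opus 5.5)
Your proof is correct and follows essentially the same route as the paper: the unique coefficient matrix $C$ with $P=C\mathbf V$; the isometry $Cx\mapsto Dx$, extended to a target unitary after padding with zero rows; the identity $\operatorname{Range}C^*=\operatorname{Range}H$ for items (2)--(3); and $H_c=C_c^*C_c$ for item (4). The only cosmetic differences are these: in (5) you exclude common factors by applying Lemma~\ref{lem:homogeneous-divisor} to $z_1^3$, whereas the paper uses that $z_1,z_2\in\mathcal E_H$ are coprime; and in (1) you need not invoke Theorem~\ref{thm:main}, since $\sigma_j<1$ is already part of the definition of $\Sigma$.
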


\begin{proof}
Let $P$ be a pointed numerator of degree at most three. Relative to
$\mathbf V$, it has a unique coefficient matrix $C$:
$
P=C\mathbf V.
$
The sphere identity is equivalent to
\[
\mathbf V^*C^*C\mathbf V=|g|^2
\qquad\text{on }\partial\B^2.
\]
Thus $H=C^*C$ belongs to $\mathcal S_g$. Conversely, $H^{1/2}$ is a
factor of $H$, so $P_H$ satisfies the sphere identity. Because
$\sigma\in\Sigma$,
\[
|\sigma_1z_1^2+\sigma_2z_2^2|
\leq\max(\sigma_1,\sigma_2)\|z\|^2<1,
\qquad z\in\B^2,
\]
and hence $g$ is zero-free in $\B^2$. Therefore $F_H$ is holomorphic
and proves \textup{(1)}.

Suppose $C^*C=D^*D$. The rule
\[
Cx\longmapsto Dx
\]
is well-defined and isometric from $\operatorname{Range}C$ to
$\operatorname{Range}D$, because both squared norms and inner products
are determined by the common matrix $C^*C=D^*D$. After adjoining zero
target coordinates, this isometry extends to a target unitary. The converse
is immediate. Moreover, for any factorisation $H= C^* C$,
\begin{align}
\operatorname{Range}C^*
=
\operatorname{Range}(C^*C)
=
\operatorname{Range}H, \label{e:5.4}
\end{align}
which proves \textup{(2)}.

Moreover,
the row space of $C$, interpreted as polynomials via $\mathbf V$, is, according to \eqref{e:5.4}  exactly $\mathcal E_H$. This proves \textup{(3)}.

If
$C=[C_1\ C_2\ C_3]$ is divided into its linear, quadratic, and cubic
column blocks, then
$
H_{c}=C_3^*C_3.
$
Thus $C_3\neq0$ exactly when $H_{c}\neq0$, proving
\textup{(4)}.

Note that $g(0)=1$, while every member of $\mathcal E_H$ has zero
constant term, and
$
\dim_{\C}\mathcal E_H=\rank H.
$
Thus full rank is equivalent to $\rank H=9$, which for $H\geq0$ is
equivalent to $H>0$. If $H>0$, then $\mathcal E_H=\mathcal W$; since this space
contains both $z_1$ and $z_2$, its elements have no common nonconstant
factor, and it contains nonzero cubics. This completes the proof.
\end{proof}

The theorem allows us to isolate within the Gram spectrahedron $\mathcal{S}_g$ exactly the points corresponding to the maps of interest in this paper: reduced maps of exact degree three. Indeed, each such map belongs to the space $\mathcal S_g^{\mathrm{red},3}$ which is defined as follows.
\begin{equation}\label{eq:reduced-degree-three-locus}
\mathcal S_g^{\mathrm{red},3}
=
\left\{
H\in\mathcal S_g:
H_{c}\neq0\ \text{and no nonconstant factor of $g$ divides
every member of $\mathcal E_H$}
\right\}.
\end{equation}
The set $\mathcal S_g^{\mathrm{red},3}$ is precisely the Gram locus of
reduced maps of exact degree three with denominator $g$. Its importance stems from how the Gram locus is linked to the moduli space $\mathcal{M}$ defined in the introduction, as we shall see within Section \ref{sec.5.3}.

\subsection{Dimension of the Gram spectrahedron}

Let
$
\mathbf V_{\leq2}=
\begin{pmatrix}V_1\\V_2\end{pmatrix}
$
be the column vector whose entries are the normalized monomial basis for the five-dimensional space 
\[
\mathcal W_2
=
\{f\in\C[z_1,z_2]:f(0)=0,\ \deg f\leq2\}.
\]

To compute the dimension of the moduli space for a fixed denominator $g$, we first determine the number of free parameters. Since $\mathcal{A}_g$ is an affine space, this reduces to describing the Hermitian perturbations of a Gram matrix that do not change the sphere identity. Equivalently, we must characterize the Hermitian polynomials of degree at most three that vanish on the unit sphere. The next lemma does exactly this and yields the 25-dimensional Gram parameter space used below.

\begin{lemma}\label{lem:gram-kernel}
For $K\in\Herm_9$, the Hermitian polynomial $\mathbf V^*K\mathbf V$
vanishes on $r=1$ if and only if there is a unique $R\in\Herm_5$ such that
\begin{equation}\label{eq:kernel-factorization}
\mathbf V^*K\mathbf V=(1-r)\mathbf V_{\leq2}^*R\mathbf V_{\leq2}.
\end{equation}
Consequently, the kernel of restriction to the sphere is naturally
isomorphic to $\Herm_5$.
\end{lemma}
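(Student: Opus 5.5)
The plan is to separate the Hermitian polynomial $\mathbf V^*K\mathbf V$ into its bihomogeneous pieces and use the circle action to reduce the vanishing condition to a statement about each Fourier mode separately.

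\emph{Sufficiency.} The factorization \eqref{eq:kernel-factorization} obviously forces vanishing on $r=1$.

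\emph{Fourier decomposition.} Write $K=(K_{ab})_{a,b=1}^3$ in blocks according to $\mathbf V=(V_1,V_2,V_3)$. Then
\[
\mathbf V^*K\mathbf V=\sum_{a,b}V_b^*K_{ba}V_a .
\]
The term $V_b^*K_{ba}V_a$ is bihomogeneous of bidegree $(a,b)$ and acquires the factor $e^{i(a-b)\theta}$ under $z\mapsto e^{i\theta}z$. Vanishing on the sphere is invariant under this action. Taking Fourier coefficients therefore shows that each mode $k=a-b\in\{-2,\dots,2\}$ vanishes separately on $r=1$. As in the proof of Theorem~\ref{thm:gram}, I homogenize each mode to a single bidegree by multiplying by powers of $r$.

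\emph{Each mode.} A bihomogeneous polynomial vanishing on the sphere vanishes identically, since it is determined by its values on $r=1$ by homogeneity in $z$ and in $\bar z$ separately. This gives the following identities.
\begin{itemize}
\item Mode $k=2$: $V_1^*K_{13}V_3=0$, so $K_{13}=0$ (and $K_{31}=0$).
\item Mode $k=1$: $r\,V_1^*K_{12}V_2+V_2^*K_{23}V_3=0$.
\item Mode $k=0$: $r^2V_1^*K_{11}V_1+rV_2^*K_{22}V_2+V_3^*K_{33}V_3=0$.
\end{itemize}

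\emph{Constructing $R$.} I want $R=\begin{pmatrix}R_{11}&R_{12}\\R_{21}&R_{22}\end{pmatrix}\in\Herm_5$ with
\[
(1-r)\mathbf V_{\le2}^*R\mathbf V_{\le2}=\mathbf V^*K\mathbf V.
\]
Expanding the left side by degree, the required matching conditions are:
\begin{itemize}
\item bidegree $(1,1)$: $R_{11}=K_{11}$;
\item bidegree $(2,2)$: $V_2^*R_{22}V_2-rV_1^*R_{11}V_1=V_2^*K_{22}V_2$;
\item bidegree $(3,3)$: $-rV_2^*R_{22}V_2=V_3^*K_{33}V_3$;
\item mixed terms: $R_{12}=K_{12}$, $-rV_1^*R_{12}V_2=V_2^*K_{23}V_3$.
\end{itemize}

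The difficulty is that $R_{22}$ is overdetermined. The $(2,2)$ condition defines it uniquely, because the map $R_{22}\mapsto V_2^*R_{22}V_2$ is a bijection from $\Herm_3$ onto Hermitian forms of bidegree $(2,2)$. Call this value $R_{22}:=$ the matrix with $V_2^*R_{22}V_2=V_2^*K_{22}V_2+rV_1^*K_{11}V_1$. The $(3,3)$ condition must then hold automatically. It does: substituting into $-rV_2^*R_{22}V_2$ gives
\[
-rV_2^*K_{22}V_2-r^2V_1^*K_{11}V_1,
\]
which equals $V_3^*K_{33}V_3$ by the mode-$0$ identity. The mixed condition is exactly the mode-$1$ identity once $R_{12}=K_{12}$, and the $(1,3)$ entries match because $K_{13}=0$.

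\emph{Uniqueness and conclusion.} Uniqueness of $R$ holds because $(1-r)f=0$ implies $f=0$ for a polynomial $f$. The kernel of restriction to the sphere is therefore identified with $\Herm_5$. The map $R\mapsto K$ is also well defined, since every $(1-r)\mathbf V_{\le2}^*R\mathbf V_{\le2}$ is a Hermitian form in $\mathbf V$ with a unique coefficient matrix $K$. Hence the correspondence $R\leftrightarrow K$ is a linear isomorphism.

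The main obstacle is this consistency between the $(2,2)$ and $(3,3)$ determinations of $R_{22}$, together with checking that the $\mathbf V$-coefficient representation of bihomogeneous forms is injective. This is used repeatedly and follows from the linear independence of the monomials $z^\alpha\bar z^\beta$.
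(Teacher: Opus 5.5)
Your proof is correct, but it takes a different route from the paper's.

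\textbf{The paper's route.} The paper first proves that the real vanishing ideal of the sphere is generated by $1-r$. It divides in $x_4$ and evaluates the remainder $ax_4+b$ on the two sheets $x_4=\pm\sqrt{1-x_1^2-x_2^2-x_3^2}$. This gives $\mathbf V^*K\mathbf V=(1-r)Q$. It then argues separately that $Q$ is real-valued, hence Hermitian. Finally, it compares the least and greatest holomorphic and antiholomorphic degrees on both sides. This shows every bihomogeneous piece of $Q$ has both degrees in $\{1,2\}$, so $Q=\mathbf V_{\le2}^*R\mathbf V_{\le2}$.

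\textbf{Your route.} You never prove divisibility by $1-r$ abstractly. You split the vanishing condition into circle-action Fourier modes and homogenize each mode, which is the same device the paper uses in the proof of Theorem~\ref{thm:gram]}. Each homogenized mode then vanishes identically, and you solve for $R$ block by block. The key checks are the ones you identified, and both go through:
\begin{itemize}
\item the $(3,3)$ determination of $R_{22}$ agrees with the $(2,2)$ determination, by the mode-$0$ identity;
\item the $(3,2)$ terms match once $R_{12}=K_{12}$, by the mode-$1$ identity.
\end{itemize}

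\textbf{What each route buys.} Yours avoids the ideal-membership fact for the sphere and stays entirely within linear algebra on bihomogeneous forms. Hermitian symmetry of $R$ is automatic from the construction, with no separate real-valuedness step. It also gives $R$ explicitly: $R_{11}=K_{11}$, $R_{12}=K_{12}$, and $V_2^*R_{22}V_2=V_2^*K_{22}V_2+rV_1^*K_{11}V_1$. As a by-product it records that every kernel element has $K_{13}=0$. The paper's route gets divisibility by $1-r$ once and for all, independent of degree. The shape of $R$ then follows from a short extremal-degree count rather than from matching each bidegree, so it scales to higher degree without any case-by-case bookkeeping.
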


\begin{proof}
The vanishing ideal of the real unit sphere is generated by $1-r$.
Indeed, write $z_1=x_1+ix_2$ and $z_2=x_3+ix_4$. Division in $x_4$ by
$r-1$ leaves a remainder $ax_4+b$; evaluation at the two values
$x_4=\pm\sqrt{1-x_1^2-x_2^2-x_3^2}$ shows on an open set that $a=b=0$.
Thus $\mathbf V^*K\mathbf V=(1-r)Q$ for a polynomial $Q$. The numerator
and $1-r$ are real-valued, so $Q$ is real-valued wherever $r\neq1$, and
hence everywhere by polynomial continuation. Thus $Q$ is Hermitian.

Comparison of the least and greatest holomorphic and antiholomorphic
degrees shows that every bihomogeneous component of $Q$ has holomorphic
and antiholomorphic degrees in $\{1,2\}$. Hence
$Q=\mathbf V_{\leq2}^*R\mathbf V_{\leq2}$ for a unique $R\in\Herm_5$.
Conversely,
the right side of \eqref{eq:kernel-factorization} has degrees between one
and three and therefore determines a unique $K\in\Herm_9$. Uniqueness in
both statements follows because the products of the normalized monomials
in $\mathbf V_{\leq2}$, and likewise in $\mathbf V$, are linearly
independent.
\end{proof}

The previous lemma identifies the vector space of Hermitian perturbations that preserve the sphere identity with $\Herm_5$, so the affine space $\mathcal{A}_g$ of Gram matrices satisfying the sphere identity has real dimension 25. The full-rank construction from the previous section gives a positive-definite Gram matrix in $\mathcal{A}_g$. The next proposition combines these facts to show that the Gram spectrahedron has non-empty relative interior in this 25-dimensional affine space and that its positive-definite points parametrize, up to target-unitary equivalence, the full-rank sphere maps with denominator $g$.

\begin{proposition}\label{prop:gram-spectrahedron}
For every $\sigma\in\Sigma$, with $g=g_\sigma$, the affine space
$\mathcal A_g$ has real dimension $25$, and $\mathcal S_g^{\mathrm{fr}}$ is
nonempty and relatively open in
$\mathcal A_g$. Modulo unitary
transformations of the target, its points parametrize precisely the
normalized sphere maps with denominator $g$ and full rank in
the minimal target $\B^9$, or, stably, in a larger ball after adjoining
zero coordinates.
\end{proposition}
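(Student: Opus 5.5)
The proof splits into three parts: the dimension count for $\mathcal A_g$, nonemptiness and openness of $\mathcal S_g^{\mathrm{fr}}$, and the parametrization statement. Each follows from results already established.

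\textbf{Dimension of $\mathcal A_g$.} First I will check that $\mathcal A_g$ is nonempty. For $\sigma\neq(0,0)$, Corollary~\ref{cor:full-rank} gives a full-rank numerator $P_\epsilon=C\mathbf V$ satisfying the sphere identity, so $H_0=C^*C\in\mathcal A_g$. For $\sigma=(0,0)$ the explicit polynomial map in the same corollary plays this role. The set $\mathcal A_g$ is cut out by the real-linear condition that $\mathbf V^*H\mathbf V-|g|^2$ vanish on $\partial\B^2$. It is therefore an affine subspace of $\Herm_9$ with direction space
\[
\mathcal K=\{K\in\Herm_9:\mathbf V^*K\mathbf V=0\text{ on }r=1\}.
\]
By Lemma~\ref{lem:gram-kernel}, the map $K\mapsto R$ is a real-linear bijection $\mathcal K\to\Herm_5$. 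It is injective by uniqueness of $R$. It is surjective because every $(1-r)\mathbf V_{\leq2}^*R\mathbf V_{\leq2}$ has holomorphic and antiholomorphic degrees between $1$ and $3$, and so equals $\mathbf V^*K\mathbf V$ for a unique $K\in\Herm_9$. Linearity follows from uniqueness. Hence $\dim_{\mathbb R}\mathcal A_g=\dim_{\mathbb R}\Herm_5=25$.

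\textbf{Nonemptiness and openness of $\mathcal S_g^{\mathrm{fr}}$.} The matrix $H_0$ above is positive definite. Indeed, the nine scalar components of the full-rank numerator are linearly independent (Corollary~\ref{cor:full-rank} and Lemma~\ref{lem:target-realization}), so $C$ has trivial kernel and $H_0=C^*C>0$. Thus $\mathcal S_g^{\mathrm{fr}}\neq\emptyset$. Relative openness is immediate: $\Herm_9^{>0}$ is open in $\Herm_9$, since the minimal eigenvalue is continuous, and $\mathcal S_g^{\mathrm{fr}}=\mathcal A_g\cap\Herm_9^{>0}$.

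\textbf{Parametrization.} Here I will invoke Theorem~\ref{thm:gram-normal-form}. Part~(2) gives a bijection between $\mathcal S_g$ and stable target-unitary classes of pointed representations with $\deg P\leq3$, under which the smallest target dimension equals $\rank H$. Part~(5) identifies full rank with $H>0$ and shows that such representations are automatically reduced and of degree three. Restricting the bijection to $\mathcal S_g^{\mathrm{fr}}$ therefore yields exactly the full-rank normalized sphere maps with denominator $g$. These are realized minimally in $\B^9$ via $P_H=H^{1/2}\mathbf V$, or in larger balls after adjoining zero coordinates. Two such numerators in $\B^9$ with equal Gram matrix differ by a unitary of $\C^9$, by the isometry argument in the proof of part~(2).

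I expect no serious obstacle, since the content is essentially assembled from earlier results. The one point needing care is the surjectivity of $\mathcal K\to\Herm_5$, which must be checked so that the dimension comes out exactly $25$ and not merely at most $25$. This is the degree bookkeeping above, together with the observation that $\mathcal A_g$ is nonempty.
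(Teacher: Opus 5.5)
Your proposal is correct and takes essentially the same route as the paper. You identify the direction space of $\mathcal A_g$ with $\Herm_5$ via Lemma~\ref{lem:gram-kernel}, take the positive-definite point from Corollary~\ref{cor:full-rank}, get relative openness from openness of positive definiteness, and read off the parametrization from Theorem~\ref{thm:gram-normal-form}. The only cosmetic difference is that the paper shows $\mathcal A_g\neq\varnothing$ using the graph numerator $gV_1$, which works for every $\sigma$ without admissibility, whereas you reuse the full-rank map; that is equally valid here because $\sigma\in\Sigma$.
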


\begin{proof}
Let
$
\mathcal L(K)
=\left.\mathbf V^*K\mathbf V\right|_{r=1}.
$
The translation space of $\mathcal A_g$ is $\ker\mathcal L$. By
Lemma~\ref{lem:gram-kernel}, we have
\begin{equation}\label{eq:kernel-isomorphism}
\ker\mathcal L\cong\Herm_5.
\end{equation}
Therefore
\begin{equation}\label{eq:fixed-denominator-dimension}
\dim_{\mathbb R}\ker\mathcal L
=\dim_{\mathbb R}\Herm_5
=25.
\end{equation}
The affine equality space is nonempty, since the graph numerator
$
P_{*}=gV_1
$
satisfies
\[
\|P_{*}\|^2=|g|^2
\qquad\text{on }r=1.
\]
Let $H_*$ be the corresponding matrix. Then
$\mathcal A_g=H_{*}+\ker\mathcal L,$
and $\dim_{\mathbb R}\mathcal A_g=25$.

Under the stated parameter condition,
Corollary~\ref{cor:full-rank} supplies a full-rank
numerator. Its coefficient matrix has rank $9$, and hence its Gram matrix
is positive definite. Thus $\mathcal S_g^{\mathrm{fr}}\neq\varnothing$.
Positive definiteness is open, so this locus is relatively open in
$\mathcal A_g$. More explicitly, if $H_0\in\mathcal S_g^{\mathrm{fr}}$, then
$H_0+K>0$ for every sufficiently small $K\in\ker\mathcal L$. Hence
$\mathcal S_g$ contains a relative neighborhood of $H_0$, and
$
\operatorname{aff}(\mathcal S_g)=\mathcal A_g.
$
The remaining assertions now follow from
Theorem~\ref{thm:gram-normal-form}: positive definiteness is equivalent to
full rank, and $H$ determines exactly one target-unitary class.
\end{proof}

So far the denominator $g_\sigma$ has been fixed. We now allow $\sigma$ to vary over the admissible parameter region and assemble the fixed-denominator spectrahedra into a single parameter space.

The graph Gram matrix depends polynomially on
$\sigma=(\sigma _1,\sigma _2)$, and
\[
\mathcal A_{g_\sigma}
=
H_{*}(\sigma)+\ker\mathcal L.
\]
The kernel is independent of $\sigma$. Thus, upon defining
\begin{align*}
\mathfrak S
:=\{(\sigma,H):\sigma\in\Sigma,\
H\in\mathcal S_{g_\sigma}^{\mathrm{red},3}\}, \quad \text{and} \quad
\mathfrak S^{\mathrm{fr}}
:=\{(\sigma,H):\sigma\in\Sigma,\ H\in\mathcal S_{g_\sigma}^{\mathrm{fr}}\},
\end{align*}
we consequently find that $\mathfrak S$ is the total Gram parameter space for reduced maps of exact
degree three, whereas $\mathfrak S^{\mathrm{fr}}$ is its full-rank locus. The latter is open and
dense: for $H\in\mathcal S_{g_\sigma}^{\mathrm{red},3}$ and
$H_0\in\mathcal S_{g_\sigma}^{\mathrm{fr}}$, the matrix
$H_t=(1-t)H+tH_0$ is positive definite for $0<t\leq1$ and tends to $H$.
It therefore has generic real dimension
$
2+25=27.
$

\subsection{Spherical equivalence and the residual source action} \label{sec.5.3}

The Gram normal form has already quotiented out unitary transformations of the target. To pass from target-unitary equivalence to full spherical equivalence, it remains to account for the source transformations that preserve the normalized denominator. Because the denominator parameters are fixed and ordered, only a residual subgroup of $U(2)$ remains.

Let
\[
D_\sigma=\diag(\sigma_1,\sigma_2),
\qquad
G_\sigma
=
\{U\in U(2):U^tD_\sigma U=D_\sigma\}.
\]
Thus $G_\sigma$ is the residual unitary source group preserving
$g_\sigma$. Define the source representation $\rho$ by
\begin{equation}\label{eq:source-representation}
\mathbf V(Uz)=\rho(U)\mathbf V(z).
\end{equation}
Precomposition by $U\in G_\sigma$ induces the action
\begin{equation}\label{eq:source-action-on-Gram}
H\longmapsto \rho(U)^*H\rho(U)
\end{equation}
on $\mathcal S_{g_\sigma}$. This action preserves positivity, rank,
reducedness, and degree.

On $\mathfrak S$ declare
$(\sigma,H)\sim(\tau,K)$ if $\sigma=\tau$ and
$K=\rho(U)^*H\rho(U)$ for some $U\in G_\sigma$. Set
$\mathcal M=\mathfrak S/\mathord{\sim}$, and let
$\mathcal M_\sigma$ be its fiber over $\sigma$.
The image of $\mathfrak S^{\mathrm{fr}}$ is denoted by
$\mathcal M^{\mathrm{fr}}$, and its fiber over $\sigma$ by
$\mathcal M_\sigma^{\mathrm{fr}}$.

\begin{lemma}\label{lem:residual-source-groups}
For ordered parameters $1>\sigma_1\geq\sigma_2\geq0$,
\[
G_\sigma=
\begin{cases}
\{\diag(\epsilon_1,\epsilon_2):\epsilon_j\in\{1,-1\}\},
&\sigma_1>\sigma_2>0,\\[3pt]
O(2),&\sigma_1=\sigma_2>0,\\[3pt]
\{\diag(\epsilon,e^{i\theta}):
\epsilon\in\{1,-1\},\ \theta\in\mathbb R\},
&\sigma_1>0=\sigma_2,\\[3pt]
U(2),&\sigma_1=\sigma_2=0.
\end{cases}
\]
\end{lemma}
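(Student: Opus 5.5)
The plan is to compute the stabilizer of the symmetric matrix $D_\sigma$ under the congruence action $U\mapsto U^tD_\sigma U$, using that $U$ is unitary. The key reformulation is this: since $U^*=\overline U^{\,t}$, the condition $U^tD_\sigma U=D_\sigma$ is equivalent to $D_\sigma U=\overline U D_\sigma$, obtained by multiplying on the left by $\overline U=(U^t)^{-1}$. Writing $U=(u_{jk})$, this reads entrywise
\[
\sigma_j u_{jk}=\overline{u_{jk}}\,\sigma_k,\qquad j,k\in\{1,2\}.
\]
This linear-looking (real-linear) system, together with unitarity, should determine $G_\sigma$ in each case.

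I will treat the four cases in turn.

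\emph{Polynomial point.} If $\sigma=(0,0)$, the condition is vacuous, so $G_\sigma=U(2)$.

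\emph{Generic case.} If $\sigma_1>\sigma_2>0$, the off-diagonal equations give $\sigma_1u_{12}=\sigma_2\overline{u_{12}}$. Taking absolute values yields $(\sigma_1-\sigma_2)|u_{12}|=0$, so $u_{12}=0$, and likewise $u_{21}=0$. The diagonal equations $\sigma_ju_{jj}=\sigma_j\overline{u_{jj}}$ with $\sigma_j>0$ force $u_{jj}$ to be real, and unitarity then gives $u_{jj}=\pm1$.

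\emph{Diagonal case.} If $\sigma_1=\sigma_2=\sigma>0$, the equations become $u_{jk}=\overline{u_{jk}}$, so $U$ is real and unitary, that is, $U\in O(2)$. Conversely, $U^tU=I$ for every real orthogonal $U$.

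\emph{Coordinate-axis case.} If $\sigma_1>0=\sigma_2$, the $(1,2)$ and $(2,1)$ equations give $\sigma_1u_{12}=0$ and $0=\overline{u_{21}}\sigma_1$, so $U$ is diagonal. The $(1,1)$ equation forces $u_{11}$ real, hence $u_{11}=\pm1$. The $(2,2)$ equation is vacuous, so $u_{22}=e^{i\theta}$ is arbitrary.

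Finally, in each case I will check the reverse inclusion, namely that the listed matrices satisfy $U^tD_\sigma U=D_\sigma$. For diagonal $U$ this reduces to $u_{jj}^2\sigma_j=\sigma_j$, which holds when $u_{jj}=\pm1$ or $\sigma_j=0$.

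There is no serious obstacle. The one step needing care is the reformulation $U^t D U = D \iff DU=\overline U D$, which relies on unitarity, since $(U^t)^{-1}=\overline U$. Once that is in place, everything else is entrywise bookkeeping. The ordering $\sigma_1\ge\sigma_2$ ensures that these four cases are exhaustive.
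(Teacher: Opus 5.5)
Your proof is correct and follows essentially the same route as the paper: both rewrite $U^tD_\sigma U=D_\sigma$ as $D_\sigma U=\overline U D_\sigma$ using $(U^t)^{-1}=\overline U$, and then read off each case. The only differences are that the paper gets diagonality of $U$ from the derived commutation relation $D_\sigma^2U=UD_\sigma^2$, whereas you get it entrywise by comparing absolute values, and that you also verify the reverse inclusions, which the paper leaves implicit.
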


\begin{proof}
Since $U$ is unitary, $(U^t)^{-1}=\overline{U}$, where the bar denotes
entrywise complex conjugation. Multiplying $U^tD_\sigma U=D_\sigma$ on
the left by $(U^t)^{-1}$ gives
\[
D_\sigma U=\overline{U}D_\sigma.
\]
Conjugating gives $D_\sigma\overline{U}=UD_\sigma$, and combining these
identities yields $D_\sigma^2U=UD_\sigma^2$.
If $\sigma_1>\sigma_2>0$, the second identity forces $U$ to be
diagonal, and the first then forces both diagonal entries to be $1$ or
$-1$. If $\sigma_1=\sigma_2>0$, the defining identity becomes
$U^tU=I_2$; together with unitarity, this says that $U$ is real
orthogonal. When $\sigma_1>0=\sigma_2$, the matrix $D_\sigma^2$ has
distinct eigenvalues, so $U$ is diagonal; the defining identity then says
that the first coordinate can only change sign, while the second can
acquire an arbitrary phase. At the
polynomial point the quadratic form vanishes, so every unitary is allowed.
\end{proof}

Geometrically this result says the following. The more symmetric the denominator, the larger the residual source symmetry group.

We can now combine the two reductions. The Gram matrix removes target-unitary freedom, while quotienting by $G_\sigma$ removes the remaining source-unitary freedom. Thus spherical equivalence classes are obtained by taking the Gram spectrahedron for each admissible denominator and quotienting by its residual source symmetry group.

\begin{corollary}[Spherical Gram normal form]
\label{cor:spherical-normal-form}
After passing to minimal-target representatives, the spherical-equivalence
classes of reduced degree-three rational sphere maps satisfy
\begin{equation}\label{eq:spherical-moduli}
\mathcal M\cong
\bigsqcup_{\sigma\in\Sigma}
\mathcal S_{g_\sigma}^{\mathrm{red},3}/G_\sigma.
\end{equation}
More explicitly, two Gram normal forms with ordered parameters
$(\sigma,H)$ and $(\tau,K)$ are spherically equivalent if and only if
\[
\sigma=\tau
\qquad\text{and}\qquad
K=\rho(U)^*H\rho(U)
\]
for some $U\in G_\sigma$. The minimal target dimension of the class is
$\rank H$. By Theorem~\ref{thm:attainable-targets}, the possible ranks are
$4,\ldots,9$ when $\sigma\neq(0,0)$ and $3,\ldots,9$ at the polynomial
point. For every $\sigma\in\Sigma$,
\begin{equation}\label{eq:fixed-moduli-quotients}
\mathcal M_\sigma
\cong
\mathcal S_{g_\sigma}^{\mathrm{red},3}/G_\sigma,
\qquad
\mathcal M_\sigma^{\mathrm{fr}}
\cong
\mathcal S_{g_\sigma}^{\mathrm{fr}}/G_\sigma.
\end{equation}
\end{corollary}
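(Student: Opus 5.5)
The plan is to combine three ingredients: the Gram normal form (Theorem~\ref{thm:gram-normal-form}), the $\Lambda$-normal form of \cite{LeblNormalForms}*{Theorem~1.1}, and Lemma~\ref{lem:residual-source-groups}.

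First, I reduce an arbitrary reduced degree-three rational sphere map to a pointed representative. By \cite{LeblNormalForms}*{Theorem~1.1}, it is spherically equivalent to a reduced pointed map $P/g_\sigma$ with ordered parameters $\sigma$. Theorem~\ref{thm:main} then gives $\sigma\in\Sigma$. Writing $P=C\mathbf V$ produces $H=C^*C\in\mathcal S_{g_\sigma}$. Parts (3) and (4) of Theorem~\ref{thm:gram-normal-form} show that reducedness and exact degree three are exactly the defining conditions of $\mathcal S_{g_\sigma}^{\mathrm{red},3}$. Part (2) shows that $H$ depends only on the stable target-unitary class, and that its rank is the minimal target dimension. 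This gives a surjection from $\bigsqcup_\sigma \mathcal S_{g_\sigma}^{\mathrm{red},3}$ onto spherical equivalence classes.

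Next I identify the fibers of this surjection. The normal-form theorem says that two $\Lambda$-normal representatives with ordered parameters are spherically equivalent precisely when $\sigma=\tau$ and they differ by a target unitary and a source unitary preserving $q$. A source unitary $U$ preserves $g_\sigma$ exactly when $q(Uz)=q(z)$, i.e.\ $U^tD_\sigma U=D_\sigma$, so $U\in G_\sigma$. The target unitary is absorbed by part (2) of Theorem~\ref{thm:gram-normal-form}. For the source unitary, I compute from \eqref{eq:source-representation}: if $P'=P\circ U=C\rho(U)\mathbf V$, then $H'=\rho(U)^*H\rho(U)$. Conversely, every such $U$ maps $\mathcal S_{g_\sigma}^{\mathrm{red},3}$ to itself. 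Here I must check that $\mathcal E_{H'}=\mathcal E_H\circ U$; divisibility by a factor $h$ of $g$ then transfers to the factor $h\circ U$ of $g\circ U=g$, and degree is preserved since $\rho(U)$ is block diagonal by degree. This yields \eqref{eq:spherical-moduli} and the fiberwise statement \eqref{eq:fixed-moduli-quotients}.

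The full-rank version follows the same way, because conjugation by the invertible matrix $\rho(U)$ preserves $H>0$. The list of possible ranks is quoted directly from Theorem~\ref{thm:attainable-targets}, since $\rank H$ equals the minimal target dimension.

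The step I expect to be the main obstacle is making the use of the normal-form theorem precise. The danger is an automorphism that is not unitary but still carries one pointed $\Lambda$-normal map to another. I would rely on the statement recalled in the introduction: once parameters are ordered, equivalence reduces to target unitaries together with source unitaries preserving $q$. I would also need the minimal-target convention, so that target automorphisms of different ball dimensions are compared after adjoining zero coordinates, which is exactly stable target-unitary equivalence.
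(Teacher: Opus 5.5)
Your proposal is correct and follows essentially the same route as the paper. Both arguments combine the $\Lambda$-normal form theorem with Theorem~\ref{thm:main} to land in $\Sigma$. They then use Theorem~\ref{thm:gram-normal-form} to absorb target unitaries and to read off rank, reducedness and degree, and the induced action $H\mapsto\rho(U)^*H\rho(U)$ of $G_\sigma$ on the source side. Your explicit checks that this action preserves $\mathcal E_H$-divisibility and $H_c\neq0$ spell out what the paper asserts in one line just before the corollary.
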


\begin{proof}
The normal-form theorem in \cite{LeblNormalForms}*{Theorem~1.1} puts every
reduced degree-three map into the pointed form used here, with normalized
denominator $g_\sigma$. The multiset
$\{\sigma_1,\sigma_2\}$ is a spherical invariant and is ordered by
convention in $\Sigma$. Two maps in this form are spherically equivalent
precisely when their numerators differ by a target unitary and a source
unitary in $G_\sigma$.

Theorem~\ref{thm:main} and the ordering of the parameters force
$\sigma\in\Sigma$, while
Theorem~\ref{thm:gram-normal-form} identifies the target-unitary class with
the unique Gram matrix $H$. Under source precomposition, that matrix
changes according to \eqref{eq:source-action-on-Gram}. Conversely, every
such source transformation and target unitary gives a spherical
equivalence. This proves \eqref{eq:spherical-moduli} and
the first identification in \eqref{eq:fixed-moduli-quotients}; restricting
to the positive-definite locus gives the second.
\end{proof}

We now prove Theorem~\ref{thm:moduli-dimension} and determine the dimension of the moduli space  $\mathcal{M}_{\sigma}$ for every choice of $(\sigma_1, \sigma_2) \in \Sigma$. To prove the theorem, we combine the preceding results. In particular, for fixed $\sigma$, we proved that the full-rank Gram space has dimension 25, so it remains to quotient by the source group $G_\sigma$ and verify that the generic stabilizer is trivial.

\begin{proof}[Proof of Theorem~\ref{thm:moduli-dimension}]
By \eqref{eq:fixed-moduli-quotients}, the full-rank locus is
$
\mathcal M_\sigma^{\mathrm{fr}}
\cong
\mathcal S_{g_\sigma}^{\mathrm{fr}}/G_\sigma.
$
The density argument above, together with openness of the quotient map,
shows that this is open and dense in $\mathcal M_\sigma$. It therefore
suffices to compute its principal quotient dimension.
Proposition~\ref{prop:gram-spectrahedron} gives
$
\dim_{\mathbb R}\mathcal S_{g_\sigma}^{\mathrm{fr}}=25
$
for every admissible fixed $\sigma$, including the coordinate axes and
the polynomial point.  Lemma~\ref{lem:residual-source-groups} gives
\[
\dim_{\mathbb R}G_\sigma=0,1,1,4
\]
in the four cases of Theorem~\ref{thm:moduli-dimension}. It remains to show
that the principal stabilizer is trivial. Choose
$H_0\in\mathcal S_{g_\sigma}^{\mathrm{fr}}$ and average over the compact
group $G_\sigma$:
\[
\overline H
=
\int_{G_\sigma}\rho(U)^*H_0\rho(U)\,dU.
\]
Because $g_\sigma(Uz)=g_\sigma(z)$, every conjugate
$\rho(U)^*H_0\rho(U)$ lies in $\mathcal A_{g_\sigma}$, and it remains
positive definite. The affine equality space $\mathcal{A}_{g_{\sigma}}$ and its positive-definite
locus are convex, so $\overline H\in\mathcal S_{g_\sigma}^{\mathrm{fr}}$.
Haar invariance shows that $\overline H$ is $G_\sigma$-invariant.

Under the identification $\ker\mathcal L\cong\Herm_5$ in
\eqref{eq:kernel-isomorphism}, the source action is equivariant. More
precisely, if
$
\mathbf V_{\leq2}(Uz)=\rho_{\leq2}(U)\mathbf V_{\leq2}(z),
$
then the action on $\Herm_5$ is
\[
R\longmapsto
\rho_{\leq2}(U)^*R\rho_{\leq2}(U).
\]
This follows immediately by applying the source change to
\eqref{eq:kernel-factorization} and using $r(Uz)=r(z)$. Now use the
decomposition
\[
\mathcal W_2
=
\operatorname{span}_{\mathbb C}\{z_1,z_2\}
\mathbin\oplus
\operatorname{span}_{\mathbb C}\{z_1^2,z_1z_2,z_2^2\}
\]
and consider the Hermitian form
\[
R_0=
\begin{pmatrix}
1&0&1&0&0\\
0&2&0&0&1\\
1&0&0&0&0\\
0&0&0&0&0\\
0&1&0&0&0
\end{pmatrix}.
\]
If $U\in U(2)$ stabilizes $R_0$, then its upper-left block gives
$U^*\diag(1,2)U=\diag(1,2)$, so
$
U=\diag(e^{i\alpha},e^{i\beta}).
$
The two off-diagonal entries coupling $z_1$ to $z_1^2$ and $z_2$
to $z_2^2$ then force $e^{i\alpha}=e^{i\beta}=1$. Thus $R_0$ has
trivial stabilizer in $U(2)$, and hence in every subgroup $G_\sigma$.

Let $K_0\in\ker\mathcal L$ correspond to $R_0$.  For sufficiently small
nonzero real $t$,
$
\overline H+tK_0\in\mathcal S_{g_\sigma}^{\mathrm{fr}}.
$
Since $\overline H$ is $G_\sigma$-invariant, the stabilizer of this
matrix is contained in the stabilizer of $K_0$ and is therefore trivial.
The space $\mathcal S_{g_\sigma}^{\mathrm{fr}}$ is convex, hence connected.
The principal orbit-type theorem for compact group actions now shows that
the trivial-stabilizer locus is relatively open and dense in each fiber.
The slice theorem then gives
\[
\dim_{\mathbb R}
\left(\mathcal S_{g_\sigma}^{\mathrm{fr}}/G_\sigma\right)
=
25-\dim_{\mathbb R}G_\sigma
\]
on the principal quotient stratum \cite{Bredon}.
Because $\mathcal M_\sigma^{\mathrm{fr}}$ is open and dense in
$\mathcal M_\sigma$, these are also the dimensions of
$\mathcal M_\sigma$.

It remains only to count denominator parameters.  The distinct-positive
chamber is two-dimensional, each nonzero diagonal or coordinate-axis
stratum is one-dimensional, and the polynomial point is zero-dimensional.
Adding these dimensions to the fixed-denominator quotient dimensions gives
$27,25,25,21$, respectively.
\end{proof}

\section{A higher-dimensional comparison}\label{sec:higher-dimensional}

Having obtained a sharp characterization in two complex variables, it is
natural to ask how much of the Gram-matrix approach extends to higher
dimensions. In this section only, let $n\geq2$, put $r=\|z\|^2$, and write
\[
q(z)=\sum_{j=1}^n\sigma_jz_j^2,
\qquad
g=1+q.
\]
We first show that the two-variable construction extends to
give a natural sufficient condition for the existence of a reduced
degree-three sphere map. We then restrict to the symmetric case
$\sigma_1=\cdots=\sigma_n$ and obtain the sharp bound there. Comparing
the two conditions shows that, unlike when $n=2$, the natural
higher-dimensional sufficient condition is not necessary when $n>2$.

For a multi-index $\nu$, set $\nu!=\nu_1!\cdots\nu_n!$ and define the
normalized monomial columns
\[
\mathcal V_k=
\left(\sqrt{\frac{k!}{\nu!}}\,z^\nu\right)_{|\nu|=k},
\qquad
d_k=\binom{n+k-1}{k}.
\]
Thus $\mathcal V_k^*\mathcal V_k=r^k$. Let $X$ be an $n \times n$ diagonal matrix.  Define $T_n$ and $S^{(n)}_X$ by 
\[
q\mathcal V_1=T_n\mathcal V_3,
\qquad
\mathcal V_3^*S^{(n)}_X\mathcal V_3
=r^2\mathcal V_1^*X\mathcal V_1,
\]
and, for $X>0$, put
\begin{equation}\label{eq:higher-residual}
M_X^{(n)}
=I_{d_3}+T_n^*T_n-S^{(n)}_X-T_n^*X^{-1}T_n.
\end{equation}

The Gram construction from Section~\ref{sec:gram} does not depend
essentially on having two variables. The same argument shows that once a
positive diagonal matrix $X$ can be found for which the residual matrix
$M_X^{(n)}$ is positive definite, a reduced pointed degree-three sphere map can be
constructed directly. We record this observation first so that the
existence problem reduces to finding such an $X$.

\begin{lemma}[Strict Gram realization]\label{lem:higher-strict-gram}
Suppose $0\leq\sigma_j<1$ for $1\leq j\leq n$. If some diagonal $X>0$  satisfies
$M_X^{(n)}>0$, then $g$ is the denominator of a reduced pointed
degree-three sphere map. The target dimension may be taken to be
$n+d_3$.
\end{lemma}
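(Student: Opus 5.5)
The plan is to copy the converse half of the proof of Theorem~\ref{thm:gram} into $n$ variables. Take the diagonal $X>0$ with $M_X^{(n)}>0$ and define
\[
P=\left(X^{1/2}\mathcal V_1+X^{-1/2}T_n\mathcal V_3\right)\oplus \left(M_X^{(n)}\right)^{1/2}\mathcal V_3 .
\]
This has $n+d_3$ components. It is pointed, since every component has zero constant term, and its degree is at most three.

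First I verify the sphere identity on $r=1$. The cross terms of the first summand are $\mathcal V_1^*T_n\mathcal V_3=\mathcal V_1^*(q\mathcal V_1)=qr$, together with its conjugate, which gives $q+\bar q$ on the sphere. The remaining squared terms are:
\begin{itemize}
\item $\mathcal V_1^*X\mathcal V_1$, which equals $\mathcal V_3^*S^{(n)}_X\mathcal V_3$ on $r=1$ by the defining identity for $S^{(n)}_X$;
\item $\mathcal V_3^*T_n^*X^{-1}T_n\mathcal V_3$;
\item $\mathcal V_3^*M_X^{(n)}\mathcal V_3$.
\end{itemize}
By \eqref{eq:higher-residual} these three add up to $\mathcal V_3^*(I_{d_3}+T_n^*T_n)\mathcal V_3$. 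Now $\mathcal V_3^*\mathcal V_3=r^3=1$ on the sphere, and $\mathcal V_3^*T_n^*T_n\mathcal V_3=\|q\mathcal V_1\|^2=|q|^2r=|q|^2$. Hence $\|P\|^2=1+|q|^2+q+\bar q=|g|^2$ on $\partial\B^n$.

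Next I check that $g$ is zero-free on the closed ball. This follows from $|q(z)|\le\max_j\sigma_j\sum_j|z_j|^2<1$ for $\|z\|\le1$. So $P/g$ is holomorphic across the closed ball and maps the sphere to the sphere.

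Degree three and reducedness come next. Since $M_X^{(n)}>0$, its square root is invertible. The second summand therefore has $d_3$ linearly independent nonzero homogeneous cubic components, so $\deg P=3$.

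If a nonconstant factor $h$ of $g$ divided every component of $P$, it would in particular divide a nonzero homogeneous cubic. However $h(0)\ne0$, because $g(0)=1$. This contradicts Lemma~\ref{lem:homogeneous-divisor}, whose proof is dimension-free. When $g=1$, reducedness is automatic. So $P/g$ is reduced.

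It remains to show that the minimal target dimension is exactly $n+d_3$. Suppose some linear relation holds among the components. Its degree-one part is a relation among the entries of $X^{1/2}\mathcal V_1$. These are independent because $X>0$, so the first $n$ coefficients vanish. What is left is a relation among the entries of $(M_X^{(n)})^{1/2}\mathcal V_3$, and these are independent because that matrix is invertible. Hence all $n+d_3$ components are linearly independent.

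No step is a real obstacle. The only point needing care is confirming that the identities $q\mathcal V_1=T_n\mathcal V_3$ and $\|q\mathcal V_1\|^2=|q|^2r$ hold with the normalized multinomial columns. Both are immediate from the definitions together with $\mathcal V_1^*\mathcal V_1=r$.
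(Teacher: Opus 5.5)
Your proposal is correct and follows essentially the same route as the paper: the same numerator $\bigl(X^{1/2}\mathcal V_1+X^{-1/2}T_n\mathcal V_3,\ (M_X^{(n)})^{1/2}\mathcal V_3\bigr)$, the same sphere-identity computation on $r=1$, the same zero-free bound for $g$, and reducedness via Lemma~\ref{lem:homogeneous-divisor}. Your additional linear-independence argument, which shows that $n+d_3$ is actually the minimal target, goes slightly beyond what the paper proves here. It mirrors the argument in Lemma~\ref{lem:target-realization} and is also correct.
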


\begin{proof}
Take mutually orthogonal target summands of dimensions $n$ and $d_3$, and set
\[
P=\left(
X^{1/2}\mathcal V_1+X^{-1/2}T_n\mathcal V_3,
\ (M_X^{(n)})^{1/2}\mathcal V_3
\right).
\]
On $r=1$, the definitions of $T_n$, $S^{(n)}_X$, and $M_X^{(n)}$ give
\[
\begin{aligned}
\|P\|^2
&=\mathcal V_3^*
\left(S^{(n)}_X+T_n^*X^{-1}T_n+M_X^{(n)}\right)\mathcal V_3
+q+\bar q\\
&=\mathcal V_3^*(I_{d_3}+T_n^*T_n)\mathcal V_3+q+\bar q\\
&=1+|q|^2+q+\bar q
=|g|^2.
\end{aligned}
\]
The parameter bounds imply
$
|q(z)|\leq\max_j\sigma_j\,\|z\|^2<1
$ on $ z\in\overline{\B^n}$.
So $g$ has no zero on the closed ball.
Because $M_X^{(n)}>0$, its square root is invertible and the second target
summand has degree three. If a nonconstant factor $h$ of $g$ divided all
components of $P$, it would divide every component of
$(M_X^{(n)})^{1/2}\mathcal V_3$, hence every cubic monomial. This is
impossible by Lemma~\ref{lem:homogeneous-divisor}, because $h(0)\neq0$.
Thus $P/g$ is reduced and has degree three.
\end{proof}

Motivated by the sharp two-variable condition
$\sqrt{1-\sigma_1^2} + \sqrt{1-\sigma_2^2}>1$, we consider its natural higher-dimensional
analogue
$\sum_{j=1}^n\sqrt{1-\sigma_j^2}>n-1.$
Although this condition will no longer be necessary when $n>2$, it is
strong enough to guarantee strict positivity of the residual Gram matrix.
By Lemma~\ref{lem:higher-strict-gram}, it is enough to find a diagonal $X>0$ for which $M_X^{(n)}>0$. We choose $X$ explicitly from the parameters $\alpha_j$ and verify positivity by decomposing the residual matrix into elementary blocks.

\begin{theorem}[Higher-dimensional sufficiency]
\label{thm:higher-sufficiency}
Let $n\geq2$, $0\leq\sigma_j<1$ for $1 \leq j \leq n$, and set
$
\alpha_j=\sqrt{1-\sigma_j^2}.
$
Assume that
\begin{equation}\label{eq:higher-sufficient-condition}
\sum_{j=1}^n\alpha_j>n-1,
\end{equation}
then there is a reduced pointed degree-three sphere map with denominator
$g=1+\sum_{j=1}^n\sigma_jz_j^2$.
\end{theorem}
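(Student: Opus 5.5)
By Lemma~\ref{lem:higher-strict-gram}, it suffices to exhibit a diagonal $X>0$ with $M_X^{(n)}>0$. I would parametrize $X=\diag(1-s_1,\dots,1-s_n)$ with $0<s_j<1$. The aim is to split $M_X^{(n)}$ into elementary blocks, each a positive diagonal matrix minus a rank-one term, and then to apply Lemma~\ref{lem:rank-one-perturbation} to each block.

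\emph{Block decomposition.} Linearity of $X\mapsto S^{(n)}_X$ and $S^{(n)}_{I}=I_{d_3}$ give $I_{d_3}-S^{(n)}_X=S^{(n)}_{\diag(s)}$. In the normalized monomial basis this matrix is diagonal: the entry at $z^\nu$ is $\frac13\sum_j s_j\nu_j$. Writing $t_j$ for the $j$-th row of $T_n$, so that $qz_j=t_j\mathcal V_3$, we get
\[
T_n^*T_n-T_n^*X^{-1}T_n=-\sum_j c_j\,t_j^*t_j,\qquad c_j=\frac{s_j}{1-s_j}.
\]
The row $t_j$ is supported on the monomials $z_j^3$ and $z_k^2z_j$ ($k\neq j$). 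These are exactly the cubic monomials with $\nu\equiv e_j \pmod 2$. These parity classes are pairwise disjoint. Every other cubic monomial has the form $z_iz_jz_k$ with distinct indices and carries the positive diagonal entry $(s_i+s_j+s_k)/3$.

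Hence $M_X^{(n)}$ is, up to permutation, a positive diagonal part plus $n$ blocks $\Delta_j-c_jv_jv_j^*$. Here $\Delta_j=\diag\bigl(s_j,\ (s_j+2s_k)/3\ (k\ne j)\bigr)$, and $v_j$ has entries $\sigma_j$ and $\sigma_k/\sqrt3$. This generalizes the $(1,3)$, $(2,4)$ splitting of Theorem~\ref{thm:sharp-feasibility}.

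\emph{Reduction to a scalar inequality.} By Lemma~\ref{lem:rank-one-perturbation}, the $j$-th block is positive definite iff $c_j\,v_j^*\Delta_j^{-1}v_j<1$. After multiplying through by $1-s_j$ and using $\sigma_j^2=1-\alpha_j^2$, this becomes
\[
s_j+\sum_{k\neq j}\frac{s_j\sigma_k^2}{s_j+2s_k}<\alpha_j^2 .
\]
As in the two-variable proof, set $d=\sum_j\alpha_j-(n-1)>0$ and choose $s_j=\kappa\alpha_j d$ with a fixed $0<\kappa<1$. Since each $\alpha_k\le1$, we have $d\le\alpha_j$, so $0<s_j<1$. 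With this choice the sum term becomes $\alpha_j\sum_{k\ne j}\sigma_k^2/(\alpha_j+2\alpha_k)$, independent of $\kappa$.

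\emph{Main step: bounding the sum.} Factor $\sigma_k^2=(1-\alpha_k)(1+\alpha_k)$. The hypothesis forces $\alpha_j+\alpha_k>1$ for every pair: the remaining $n-2$ terms contribute at most $n-2$, so this pair must exceed $1$. Therefore
\[
\frac{1+\alpha_k}{\alpha_j+2\alpha_k}\le1,
\]
and so
\[
\sum_{k\ne j}\frac{\sigma_k^2}{\alpha_j+2\alpha_k}\le\sum_{k\ne j}(1-\alpha_k)=\alpha_j-d .
\]
The left side of the scalar inequality is then at most $\kappa\alpha_jd+\alpha_j(\alpha_j-d)<\alpha_j^2$.

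Consequently every block, and hence $M_X^{(n)}$, is positive definite. Lemma~\ref{lem:higher-strict-gram} then yields the required reduced pointed degree-three map. The delicate points are confirming the exact normalization of the entries of $\Delta_j$ and $v_j$, and the pairwise inequality $\alpha_j+\alpha_k>1$. I expect that inequality to be what makes the bound close.
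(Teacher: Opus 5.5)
Your proposal is correct and follows the paper's proof essentially step for step. It uses the same block decomposition of $M_X^{(n)}$ into the classes spanned by $z_j^3$ and $\sqrt3\,z_jz_k^2$ plus the square-free cubics, the same rank-one perturbation criterion, and the same pairwise bound $\alpha_j+\alpha_k>1$ giving $\sigma_k^2/(\alpha_j+2\alpha_k)\le 1-\alpha_k$. The only difference is that you take $s_j=\kappa\alpha_j d$ for an arbitrary $0<\kappa<1$, whereas the paper fixes $\kappa=\tfrac12$.
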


\begin{proof} Since $\sigma_j <1$, we have
$\alpha_j>0$. Now, put
$
\delta=\sum_{j=1}^n\alpha_j-(n-1)>0,
$
and set
\[
s_j=\frac12\alpha_j\delta,
\qquad
X=\diag(1-s_1,\ldots,1-s_n).
\]
For every $j,$ since the sum of the other $n-1$ numbers $\alpha_k$ is at most $n-1$,
$\delta\leq\alpha_j$. Therefore
$
0<s_j<\alpha_j^2\leq1,
$
and hence $X>0$.

We now show that $M_X^{(n)}>0$. For fixed $i$, let $E_i$ be the coordinate
subspace in the normalized cubic space spanned by
\[
z_i^3,
\qquad
\sqrt3\,z_i z_j^2\quad(j\neq i).
\]
These subspaces are mutually orthogonal. The remaining coordinate vectors
are the square-free cubics $\sqrt6\,z_i z_jz_k$, where $i<j<k$.
The matrix $M_X^{(n)}$ preserves this decomposition. On $E_i$ we have
\begin{equation}\label{eq:higher-block}
M_X^{(n)}\big|_{E_i}
=\Delta_i-\frac{s_i}{1-s_i}v_iv_i^*,
\end{equation}
where
\[
(\Delta_i)_{jj}=
\begin{cases}
s_i,&j=i,\\[2pt]
\dfrac{s_i+2s_j}{3},&j\neq i,
\end{cases}
\quad \text{ and } \quad
(v_i)_j=
\begin{cases}
\sigma_i,&j=i,\\[2pt]
\dfrac{\sigma_j}{\sqrt3},&j\neq i.
\end{cases}
\]
On the coordinate vector $\sqrt6\,z_i z_jz_k$, the corresponding entry is
$
(s_i+s_j+s_k)/3>0.
$

Each $\Delta_i$ is positive definite. Lemma~\ref{lem:rank-one-perturbation}
applied to \eqref{eq:higher-block} shows that this block is positive definite
exactly when
\begin{equation}\label{eq:higher-block-condition}
s_i\left(
1+\sum_{j\neq i}\frac{\sigma_j^2}{s_i+2s_j}
\right)<\alpha_i^2.
\end{equation}
Condition \eqref{eq:higher-sufficient-condition} also gives
$
\alpha_i+\alpha_j>1
$ for $i\neq j$,
since the remaining $n-2$ terms have sum at most $n-2$. Hence
\[
\frac{1-\alpha_j^2}{\alpha_i+2\alpha_j}
\leq1-\alpha_j.
\]
Using $\sigma_j^2=1-\alpha_j^2$ and the definition of $s_j$, we obtain
\[
s_i\left(
1+\sum_{j\neq i}\frac{\sigma_j^2}{s_i+2s_j}
\right)
=\alpha_i\left(
\frac{\delta}{2}+
\sum_{j\neq i}\frac{1-\alpha_j^2}{\alpha_i+2\alpha_j}
\right)\leq\alpha_i\left(
\frac{\delta}{2}+\sum_{j\neq i}(1-\alpha_j)
\right)=\alpha_i\left(\alpha_i-\frac{\delta}{2}\right)
<\alpha_i^2.
\]
Thus every block in \eqref{eq:higher-block} is positive definite, and so is
$M_X^{(n)}$. Lemma~\ref{lem:higher-strict-gram} completes the proof.
\end{proof}

The preceding condition is not necessary in higher dimensions. As we will now see, the symmetric
ray already gives the sharp comparison.
This is due to the fact that, in the symmetric case, the additional permutation and sign symmetries allow us to sharpen the preceding sufficient condition to one that is also necessary. Averaging over these symmetries reduces the linear Gram matrix to a scalar matrix, after which positivity of the residual matrix can be checked explicitly.

\begin{theorem}[Sharp symmetric bound]\label{thm:symmetric-higher}
Let $n\geq2$ and $\sigma\geq0$. There is a reduced pointed degree-three
sphere map with denominator
$
g=1+\sigma\sum_{j=1}^nz_j^2
$
if and only if
\begin{equation}\label{eq:symmetric-sharp-bound}
\sigma<\sqrt{\frac3{n+2}}.
\end{equation}
\end{theorem}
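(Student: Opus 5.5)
The plan is to run the two-variable Gram argument in $n$ variables and exploit the full symmetry of the symmetric denominator, which forces the linear Gram matrix $X$ to be scalar. Once $X$ is scalar, the residual matrix becomes a scalar matrix minus a multiple of $T_n^*T_n$. The case $\sigma=0$ is trivial: the bound holds, and $P=\mathcal V_3$ is a polynomial realization. So assume $\sigma>0$.

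\emph{Structure of $T_n$.} Since $qz_i=\sigma\bigl(z_i^3+\sum_{j\neq i}z_iz_j^2\bigr)$, row $i$ of $T_n$ has entry $\sigma$ at $z_i^3$ and $\sigma/\sqrt3$ at each $\sqrt3\,z_iz_j^2$ with $j\ne i$. Distinct rows have disjoint supports, because $z_iz_k^2\neq z_kz_i^2$. Hence
\[
T_nT_n^*=\frac{(n+2)\sigma^2}{3}\,I_n .
\]
Consequently $T_n^*T_n$ is a scaled orthogonal projection whose nonzero eigenvalue is $\lambda:=(n+2)\sigma^2/3$, and $\rank T_n=n$.

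\emph{Computing the residual for scalar $X$.} For $X=xI_n$ we have $r^2\mathcal V_1^*X\mathcal V_1=xr^3$, so $S^{(n)}_X=xI_{d_3}$. Writing $x=1-s$ with $s<1$, this gives
\[
M^{(n)}_X=s\,I_{d_3}-\frac{s}{1-s}\,T_n^*T_n .
\]
Its eigenvalues are $s$ (on $\ker T_n$, which is nonzero since it contains the square-free cubics when $n\ge3$, and also the complement of the row space when $n=2$) and $s\bigl(1-\tfrac{\lambda}{1-s}\bigr)$. Therefore $M^{(n)}_X\ge0$ with $M^{(n)}_X\ne0$ holds if and only if $0<s$ and $\lambda\le 1-s$. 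Such an $s\in(0,1)$ exists if and only if $\lambda<1$, which is exactly \eqref{eq:symmetric-sharp-bound}.

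\emph{Sufficiency.} If $\lambda<1$, any $0<s<1-\lambda$ makes $M^{(n)}_X>0$. Lemma~\ref{lem:higher-strict-gram} then produces the reduced pointed degree-three map.

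\emph{Necessity.} Here I will redo the forward direction of Theorem~\ref{thm:gram} in $n$ variables.
\begin{enumerate}
\item Let $G$ be the hyperoctahedral group of signed permutation matrices. Every $U\in G$ satisfies $g(Uz)=g(z)$.
\item Replace $P$ by $|G|^{-1/2}\bigoplus_{U\in G}P\circ U$. This is again a reduced pointed degree-three sphere map with denominator $g$, by the same common-factor argument as before.
\item Its linear Gram matrix is the $G$-average of $A_0^*A_0$. Sign changes kill the off-diagonal entries and permutations equalize the diagonal, so $X=xI_n$.
\item The Fourier comparisons carry over verbatim with $\mathcal V_j$ in place of $V_j$. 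They give $A^*B=T_n$, hence $X>0$ because $\rank T_n=n$. They also give $M^{(n)}_X=B_0^*B_0+K^*K\ge0$.
\item If $M^{(n)}_X=0$, then $K=0$ and $B_0=0$, so $s=0$ by the eigenvalue formula above. Thus $X=I_n$, $P_2=0$ and $P_3=qP_1$, so $P=gP_1$, contradicting reducedness.
\end{enumerate}
Hence $M^{(n)}_X\ge0$ and $M^{(n)}_X\neq0$. By the equivalence in the previous paragraph this forces $s>0$ and $\lambda\le1-s<1$, which is \eqref{eq:symmetric-sharp-bound}.

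The main point needing care is step (4): checking that D'Angelo's Fourier identities and the orthogonal splitting $B=B_0+AX^{-1}T_n$ go through unchanged in $n$ variables. This requires the $n$-variable analogues of \eqref{eq:qT}, \eqref{eq:S-def} and \eqref{eq:G2}, and the rank argument showing $X>0$. The eigenvalue computation itself is routine once $T_nT_n^*$ is identified as scalar.
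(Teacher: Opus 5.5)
Your proposal is correct and takes essentially the same route as the paper. You symmetrize over the signed permutation group to make the linear Gram matrix scalar, use $T_nT_n^*=\lambda I_n$ to read off the eigenvalues of the residual on $\ker T_n$ and on the range of $T_n^*$, exclude $M^{(n)}_X=0$ by the reducedness argument $P=gP_1$, and obtain sufficiency from Lemma~\ref{lem:higher-strict-gram}. The differences are only organizational: you derive $X>0$ after symmetrizing rather than before, you package the scalar-$X$ eigenvalue analysis as one equivalence used in both directions, and you skip the paper's preliminary check that $\sigma<1$, which your argument does not need because $\lambda\le 1-s<1$ comes out directly.
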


\begin{proof}
Suppose first that a reduced map $\frac{P}{g}$ exists. If $\sigma\geq1$, then
\[
z_1=i\sqrt{\frac{\sigma+1}{2\sigma}},
\qquad
z_2=\sqrt{\frac{\sigma-1}{2\sigma}},
\qquad
z_3=\cdots=z_n=0
\]
is a sphere point at which $g=0$, contradicting the zero-free
property of reduced denominators on $\overline{\B^n}$
\cite{CimaSuffridge}. Thus $\sigma<1$. The case
$\sigma=0$ already satisfies
\eqref{eq:symmetric-sharp-bound}, so assume $\sigma>0$.

Write $
P=P_1+P_2+P_3,$ where $P_j$ is vector-valued homogeneous of degree $j$. Write
$
P_1=A\mathcal V_1$, $
P_3=B\mathcal V_3,
$
and put $X=A^*A$. The same Fourier-coefficient comparison used in the proof
of Theorem~\ref{thm:gram} gives
$
A^*B=T_n.
$
Since $\sigma>0$, the polynomials $qz_1,\ldots,qz_n$ are linearly
independent. Hence $T_n$ has rank $n$, and so does $A$. Therefore $X>0$.

Let $\mathcal G_n$ be the finite group of signed permutation matrices and
symmetrize the numerator by setting
\[
\widehat P(z)=\frac1{\sqrt{|\mathcal G_n|}}
\bigoplus_{U\in\mathcal G_n}P(Uz).
\]
Since $g(Uz)=g(z)$, the quotient $\widehat P/g$ is again a pointed
degree-three sphere map. It is reduced because the direct sum contains the
original numerator as the summand corresponding to $U=I_n$.

Similarly as for $P,$ write $
\widehat P= \widehat P_1+ \widehat P_2+\widehat P_3,$
and $\widehat P_1=\widehat A\mathcal V_1$. The matrix $\widehat A$ is the
vertical stack of the matrices $AU/\sqrt{|\mathcal G_n|}$, $U \in \mathcal{G}_n$, so its Gram
matrix is
\[
\widehat X
=\frac1{|\mathcal G_n|}\sum_{U\in\mathcal G_n}U^*XU
=\mu I_n,
\qquad
\mu=\frac{\operatorname{tr}X}{n}>0.
\]
Write $\widehat P_3=\widehat B\mathcal V_3$, decompose
\[
\widehat B=\widehat A(\mu I_n)^{-1}T_n+\widehat B^\perp,
\qquad
\widehat A^*\widehat B^\perp=0,
\]
and define $\widehat K$ by
$\mathcal V_1\otimes\widehat P_2=\widehat K\mathcal V_3$. The forward Gram
calculation from Theorem~\ref{thm:gram} now gives
\begin{equation}\label{eq:higher-forward-gram}
M_{\mu I_n}^{(n)}=\widehat B^{\perp *}\widehat B^\perp
+\widehat K^*\widehat K\geq0.
\end{equation}

Since $S^{(n)}_{\mu I_n}=\mu I_{d_3}$, and
\begin{equation}\label{eq:symmetric-scalar-residual}
M_{\mu I_n}^{(n)}
=(1-\mu)I_{d_3}+(1-\mu^{-1})T_n^*T_n.
\end{equation}
The rows of $T_n$ have disjoint supports and squared norm
\[
\sigma^2\left(1+\frac{n-1}{3}\right)
=\frac{n+2}{3}\sigma^2.
\]
Therefore
\begin{equation}\label{eq:symmetric-T-norm}
T_nT_n^*=\lambda I_n,
\qquad
\lambda=\frac{n+2}{3}\sigma^2.
\end{equation}
By $d_3>n$, the kernel of $T_n$ is nonzero. Evaluating
\eqref{eq:symmetric-scalar-residual} on this kernel shows that
$1-\mu\geq0$.

We claim that $M_{\mu I_n}^{(n)}\neq0$. Otherwise
\eqref{eq:symmetric-scalar-residual}, evaluated on $\ker T_n$, would give
$\mu=1$. Equality in \eqref{eq:higher-forward-gram} would also give
$\widehat P_2=0$ and $\widehat B^\perp=0$. It would follow that
\[
\widehat P_3=\widehat A T_n\mathcal V_3
=q\widehat A\mathcal V_1=q\widehat P_1,
\]
and hence $\widehat P=g\widehat P_1$, contrary to reducedness. This proves
the claim. In particular, $\mu\neq1$, so $0<\mu<1$. On the range of
$T_n^*$, the eigenvalue in \eqref{eq:symmetric-scalar-residual} is
\[
(1-\mu)+(1-\mu^{-1})\lambda
=\frac{(1-\mu)(\mu-\lambda)}{\mu}.
\]
Its nonnegativity gives $\lambda\leq \mu<1$. Hence
$\lambda<1$, which is equivalent to \eqref{eq:symmetric-sharp-bound}.

Conversely, suppose \eqref{eq:symmetric-sharp-bound} holds, so that
$\lambda<1$, and choose $\lambda<\mu<1$.
Equations \eqref{eq:symmetric-scalar-residual} and
\eqref{eq:symmetric-T-norm} then show that $M_{\mu I_n}^{(n)}>0$.
Lemma~\ref{lem:higher-strict-gram} produces the required reduced map.
\end{proof}

We can now compare the natural sufficient condition from
Theorem~\ref{thm:higher-sufficiency} with the sharp symmetric bound. On
the symmetric ray, the former becomes
$n\sqrt{1-\sigma^2}>n-1,$
whereas the sharp condition is \eqref{eq:symmetric-sharp-bound}.
For $n=2$, these bounds agree, recovering the sharp region from
Section~\ref{sec:feasibility}. For every $n>2$, however, there is a
nonempty interval between them.

\begin{corollary}\label{cor:square-root-not-necessary}
For $n>2$, the condition in
\eqref{eq:higher-sufficient-condition} is sufficient but not necessary.
More precisely, if
\[
\frac{\sqrt{2n-1}}{n}
\leq\sigma<\sqrt{\frac3{n+2}},
\]
then the symmetric denominator $1+\sigma\sum_{j=1}^nz_j^2$ has a
reduced degree-three
realization, but the square-root condition fails.
\end{corollary}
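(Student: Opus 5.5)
The plan is to combine the two theorems just proved, reducing everything to two elementary inequalities in $\sigma$ on the symmetric ray. For existence in the stated range, we invoke Theorem~\ref{thm:symmetric-higher}: whenever $\sigma<\sqrt{3/(n+2)}$ it produces a reduced pointed degree-three realization of $1+\sigma\sum_j z_j^2$. So the upper bound of the interval needs nothing new.

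To see that the square-root condition fails, note that with all $\sigma_j=\sigma$ it reads $n\sqrt{1-\sigma^2}>n-1$. Since $\sigma<1$ here, we may square both nonnegative sides, which gives the equivalent form
\[
\sigma^2<1-\frac{(n-1)^2}{n^2}=\frac{2n-1}{n^2}.
\]
Thus the condition fails exactly when $\sigma\geq\sqrt{2n-1}/n$. This matches the lower endpoint of the displayed interval.

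It remains to check that the interval is nonempty for $n>2$, i.e.\ that
\[
\frac{2n-1}{n^2}<\frac{3}{n+2}.
\]
Clearing denominators, this becomes $(2n-1)(n+2)<3n^2$, i.e.\ $2n^2+3n-2<3n^2$, i.e.\ $n^2-3n+2>0$. The left side factors as $(n-1)(n-2)$, which is positive precisely for $n>2$. At $n=2$ the two bounds coincide, consistent with Theorem~\ref{thm:main}.

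Finally, sufficiency of \eqref{eq:higher-sufficient-condition} in general is Theorem~\ref{thm:higher-sufficiency}, and the nonempty interval exhibits its non-necessity. Every step is a short computation; the only point needing care is the factorization $(n-1)(n-2)$, which is what isolates exactly the range $n>2$.
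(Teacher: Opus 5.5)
Your proposal is correct and follows the paper's proof essentially verbatim: you reduce the square-root condition on the symmetric ray to $\sigma^2<(2n-1)/n^2$, show the interval is nonempty via the factor $(n-1)(n-2)$, and invoke Theorem~\ref{thm:symmetric-higher} for existence. Your justification of the squaring step and your citation of Theorem~\ref{thm:higher-sufficiency} for the sufficiency half are small additions that the paper leaves implicit.
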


\begin{proof}
On the symmetric ray, the square-root condition is equivalent to
$\sigma^2<(2n-1)/n^2$. For $n>2$, note that
\[
\frac3{n+2}-\frac{2n-1}{n^2}
=\frac{(n-1)(n-2)}{n^2(n+2)}>0.
\]
The asserted interval is therefore nonempty, and the conclusion follows
from Theorem~\ref{thm:symmetric-higher}.
\end{proof}

Thus the square-root condition that exactly characterizes admissibility
in two variables continues to provide a sufficient condition in every
dimension, but it ceases to be sharp once $n>2$. This highlights a distinction
between the two-variable problem studied in the preceding sections and
its higher-dimensional analogue.

\end{document}